\newtheorem{theorem}{Theorem}[section]
\newtheorem{proposition}[theorem]{Proposition}
\newtheorem{lemma}[theorem]{Lemma}
\newtheorem{corollary}[theorem]{Corollary}
\def\F{\mathcal{F} }
\def\R{\mathbb{R} } 
\def\Z{\mathbb{Z} } 
\def\S{\mathbb{S} } 
\def\T{\mathbb{T} } 
\def\nbd{neighborhood } 
\def\nbds{neighborhoods } 
\def\iff{if and only if }
\title[Preorder characterizations of lower separation axioms]
{Preorder characterizations of lower separation axioms and their applications to foliations and flows}
\author{Tomoo Yokoyama}
\date{\today}
\address{Department of Mathematics, Kyoto University of Education/JST PRESTO, 
1 Fujinomori, Fukakusa, Fushimi-ku Kyoto, 612-8522, Japan \\
}
\email{tomoo@kyokyo-u.ac.jp}
\thanks{The author is partially supported
by the JST PRESTO Program at Department of Mathematics, Kyoto University of Education
and by Grant for Basic Science Research Projects from The Sumitomo Foundation.}
\begin{document}

\maketitle

\begin{abstract} 
In this paper, we characterize several lower separation axioms $C_0, C_D$, $C_R$, $C_N$, $\lambda$-space, nested, $S_{YS}$, $S_{YY}$, $S_{YS}$,  and $S_{\delta}$ using pre-order.  
To analyze topological properties of (resp. dynamical systems) foliations,  we introduce notions of topology (resp. dynamical systems) for foliations. 
Then proper (resp. compact, minimal, recurrent) foliations  are characterized by separation axioms. 
Conversely, lower separation axioms are interpreted into the condition for foliations and several relations of them are described. 
Moreover, 
we introduce some notions for topologies from dynamical systems and foliation theory. 
\end{abstract}

\section{Introduction}

In this paper, 
we characterize $T_{-1}, T_{1/2}, T_{1/3},$ and $T_{1/4}$ topologies using order properties.  
These characterizations implies the invariance under arbitrary disjoint unions 
(i.e. the disjoint union of $T_{i}$ spaces is $T_{i}$ for $i = -1, 1/2, 1/3, 1/4$).  
Moreover, we show the following relations between topologies: 
%Topologies satisfy 
$S_1$ $\Rightarrow$  $C_0$ $\Rightarrow$ recurrent.  
The following are equivalent for a topological space $X$:
(1) $X$ is $T_{YS}$; 
(2) $X$ is $T_0$ with $\mathop{\Downarrow}  x \cap \mathop{\Downarrow}  y  = \emptyset$ for any $x \neq y \in X$; 
(3) $X$ is $T_{1/4}$ and a downward forest. 
The following inclusion relations between topologies hold: $T_{YS}$ $\Rightarrow$ $T_{1/4}$.  
Moreover, we characterize several lower separation axioms 
$C_0, C_D$, $C_R$, $C_N$, $S_{YS}$, $S_{YY}$, $S_{Y}$, $\lambda$-space, nested, and  $S_{\delta}$ using pre-order.  
For instance, we have the following characterization for a topological space $X$:  
\\
$X : \lambda$-space $\Leftrightarrow$  $\overline{U} - U \subseteq \min X$ for any $\lambda$-closed subset $U$;  
\\
$X$: $C_R$ $\Leftrightarrow$ $X$: $S_{1}$ (i.e. $\min X = X$); 
\\
$X$: $C_N$ $\Leftrightarrow$ $\mathop{\downarrow} x$ is down-directed for any point $x$ of $X$; 
\\ 
$X$: $S_{Y}$ $\Leftrightarrow$ $\mathop{\mathrm{ht}}_{\tau} X \leq 1$ and the $T_0$-identification $\hat{X}$ of $X$ is $\min$-$\S^1$-free;   
\\ 
$X$: $S_{YS}$ $\Leftrightarrow$ $\mathop{\mathrm{ht}}_{\tau} X \leq 1$ and $\hat{X}$ is an downward forest.  

On the other hands, we introduce topological notions for foliations and flows. 
In fact, 
a codimension one foliation on a compact manifold is $S_{YS}$ if and only if it is $S_{1/2}$ such that 
$\overline{L_1} \cap \overline{L_2} = \emptyset$ for any leaves $L_1, L_2 \subset \mathrm{LD}$ with $\hat{L_1} \neq \hat{L_2}$, where $\overline{A}$ is the closure of a subset $A$ 
and $\hat{L} := \cup \{ L' \in \F \mid \overline{L} = \overline{L'} \}$. 
A foliation on a paracompact manifold is recurrent if and only if it is $C_0$. 
For codimension one foliations on compact manifolds, we show the following relations: 
pointwise almost periodic $\Rightarrow$ $S_{1/2}$ $\Rightarrow$ recurrent.  

In addition, 
we describe a sufficient condition that the set of $\F$-saturated open subsets of a decomposition on a topological space becomes a topology.

Conversely, we introduce some notions (recurrent, attracting, saddle, (weakly) hyperbolic-like, and exceptional) for topologies from dynamical systems and foliation theory, and characterize some notions using general topology theory. 
For instance, we show the following inclusions for a compact space with a flow $v$:  ``without weakly $\tau_v$-hyperbolic-like points'' $ \Rightarrow \tau_v$-recurrent  $\Rightarrow$ ``without $\tau_v$-hyperbolic-like points'', 
where $\tau_v$ is the quotient topology on the orbit space. 
Moreover if $X$ is a compact manifold, 
then each hyperbolic minimal set consists of weakly $\tau_v$-hyperbolic-like points.

\section{Preliminaries}

\subsection{Topological notions}

Let $X$ be a topological space. 
A subset $A$ of $X$ is saturated if 
$A$ is an intersection of open subsets, 
and is $\lambda$-closed if 
$A$ is an intersection of a saturated subset and a closed subset.  
Complements of $\lambda$-closed subsets are said to be $\lambda$-open. 
A topological space $X$ is a $\lambda$-space \cite{ADG} if 
the set of $\lambda$-open subsets becomes a topology. 
Clearly a topological space $X$ is a $\lambda$-space 
\iff the union of any two $\lambda$-closed sets is $\lambda$-closed. 
A subset $A$ of $X$ is kerneled if 
$A = \mathop{\mathrm{ker}} A$, 
where $\mathop{\mathrm{ker}} A := \cap \{U \in \tau \mid A \subseteq U \}$. 
For a subset $A$, denote by $\sigma A := \overline{A} - A$ the shell of $A$.  
A point $x$ is said to be closed (resp. open, kerneled) 
if so is $\{ x \}$. 
The derived set of a point $x \in X$ is the set of all limit points of the singleton $\{ x \}$. 
The kernel $\mathop{\mathrm{ker}} x$ of a point $x \in X$ is the intersection of all open \nbd of $x$. 
The shell of a point $x \in X$ is the difference $\mathop{\mathrm{ker}} x - \{ x \}$. 

A topological space is a Baire space if each countable intersection of dense open subsets of it is dense. 
A topological space is anti-compact if each compact subset is finite. 
In \cite{HM}, a topological space $X$ is nested if 
either $U \subseteq V$ or $V \subseteq U$ for any open subsets $U, V$ of $X$.  

Define the class $\hat{x}$ of a point $x$ of a topological space $(X, \tau)$ by 
$\hat{x} := \{ y \in X \mid \overline{x} = \overline{y} \}$, 
where $\overline{x}$ is the closure of a singleton $\{ x \}$. 
The quotient space of $X$ by the classes is denoted by $\hat{X}$ and called 
the class space of $X$. 
The quotient topology is denoted by $\hat{\tau}$. 
In other words, 
the class space $\hat{X}$ of $X$ is the quotient space $X/\sim$ defined by the following relation: 
$x \sim y$ if $\overline{x} = \overline{y}$. 
Note that the $\tau$-closure of a point $x$ is the disjoint union of the $\hat{\tau}$-closure of the class $\hat{x}$ (i.e. $\bigsqcup \overline{\hat{x}}^{\hat{\tau}} = \bigcup 
\{ \hat{y} \in \hat{X} \mid \hat{y} \in \overline{\hat{x}}^{\hat{\tau}} \}  = \overline{x}$ for any point $x \in X$, where $\bigsqcup$ is a disjoint union symbol).

\subsection{Notions of orders}

Recall that a pre-order (or quasi-order) is a binary relation on a set that is reflexive and transitive and a partial order is an antisymmetric pre-order. 
%A partially ordered set is also called a poset.  
A set with a partial order is called a partially ordered set (also called a poset). 
A chain is a totally ordered subset of a poset. 
A subset is called a pre-chain if the $T_0$-identification of it is a chain. 
Two points $x$ and $y$ of a pre-ordered set is incomparable if neither $x \leq y$ nor $x \geq y$. 

Let $(X, \leq)$ be a pre-ordered set. 
Define the height $\mathop{\mathrm{ht}} x$ of a point $x \in X$ by 
$\mathop{\mathrm{ht}} x := \sup \{ |C| - 1 \mid C :\text{chain containing } x \text{ as the maximal point}\}$.  
The height $\mathop{\mathrm{ht}} X$ of $X$ is defined by 
$\mathop{\mathrm{ht}} X := \sup_{x \in X} \mathop{\mathrm{ht}} x$. 
Denote by $X_{i}$ (resp. $\min X, \max X$) the set of height $i$ points 
(resp. minimal points, maximal points). 
Moreover define   
the upset $\mathop{\uparrow}  x := \{ y \in X \mid x \leq y \} $ 
(resp. the downset $\mathop{\downarrow}  x := \{ y   \in X \mid y \leq x \} $) of 
a point $x \in X$, 
the class $\hat{x} := \{ y   \in X \mid y = x \} $) of $x$, 
and 
the derived set $\mathop{\Uparrow}  x :=  \mathop{\uparrow}  x - \{ x \}$ 
(resp. the shell $\mathop{\Downarrow}  x := \mathop{\downarrow}  x - \{ x \}$) of a point $x \in X$.  
%Then 
For a pair $x \leq y \in X$, 
$[x,y] := \{ z \in X \mid x \leq z \leq y \}$,  
$[x,y) := \{ z \in X \mid x \leq z < y \}$,  
$(x,y] := \{ z \in X \mid x < z \leq y \}$,  
and $( x, y ) := \{ z \in X \mid x < z < y \}$. 
Then $[x, x] = \hat{x}$, $[x, y] = \mathop{\uparrow}  x \cap \mathop{\downarrow}  y$, and 
$(x, y) = \mathop{\Uparrow}  \hat{x} \cap \mathop{\Downarrow}  \hat{y}$. 
In general, $[x, y ] - \{ x, y \} = \mathop{\Uparrow}  x \cap \mathop{\Downarrow} y \neq (x, y)$. 
For an subset $A$ of $X$, 
$\mathop{\uparrow}  A := \bigcup_{x \in A} \mathop{\uparrow} x $ 
(resp. $\mathop{\downarrow}  A := \bigcup_{x \in A}  \mathop{\downarrow} x $), 
and 
$\mathop{\Uparrow}  \hat{A} := \mathop{\uparrow}  A - A$ 
(resp. $\mathop{\Downarrow}  \hat{A} := \mathop{\downarrow}  A - A$). 
Note that 
$\mathop{\Uparrow}  \hat{x} = \mathop{\uparrow}  x - \hat{x}$ and $\mathop{\Downarrow}  \hat{x} = \mathop{\downarrow}  x - \hat{x}$ for a pont $x \in X$. 
If $A = \mathop{\uparrow}  A$ 
(resp. $A = \mathop{\downarrow}  A$), 
then $A$ is  
called an upset (resp. a downset). 
Define the class poset $\hat{X} := X / \sim$ of a a pre-ordered set $X$ as follows: 
$x \sim y $ if $\hat{x} = \hat{y}$. 
Then $\hat{X}$ is a poset (i.e. antisymmetric). 
Denote by $\mathop{\hat{\uparrow}}, \mathop{\hat{\downarrow}}, 
\mathop{\hat{\Uparrow}}, \mathop{\hat{\Downarrow}}$ the operations 
with respect to the class poset. 
Then $\mathop{\hat{\uparrow}}  \hat{x} = \{ \hat{y} \in \hat{X} \mid x \leq y \}$,  
$\mathop{\hat{\downarrow}} \hat{x} = \{ \hat{y} \in \hat{X} \mid y \leq x \} $, 
$\mathop{\hat{\Uparrow}}  \hat{x} =  \mathop{\hat{\uparrow}}  \hat{x} - \{ \hat{x} \}$, and 
$\mathop{\hat{\Downarrow}}  \hat{x} = \mathop{\hat{\downarrow}}  \hat{x} - \{ \hat{x} \}$. 
Moreover, 
$\bigsqcup \mathop{\hat{\uparrow}} \hat{x} = \mathop{\uparrow}  x$,  
$\bigsqcup \mathop{\hat{\downarrow}} \hat{x} \mathop{\downarrow}  x$, 
$\bigsqcup \mathop{\hat{\Uparrow}} \hat{x} = \mathop{\Uparrow} \hat{x}$, and 
$\bigsqcup \mathop{\hat{\Downarrow}} \hat{x} = \mathop{\Downarrow} \hat{x}$

A downward (resp. upward) forest is a pre-ordered set of which the upset (resp. downset) of any point is a pre-chain. 
Notice that a poset is a downward forest if and only if 
$\mathop{\Downarrow}  x \cap \mathop{\Downarrow} y = \emptyset$ 
for any incomparable points $x, y$ of it. 
A downward (resp. upward) forest is a downward (resp. upward) tree if 
there is an point whose downset (resp. upset) is the whole set. 
Then the point is called the root of the downward (resp. upward) tree. 
Note that a downward (resp. upward) tree of height zero is a singleton 
and that a downward (resp. upward) tree of height one consists of 
height zero (resp. one ) points except the root (see Figure \ref{tree}). 
\begin{figure}
\begin{center}
\includegraphics[scale=0.4]{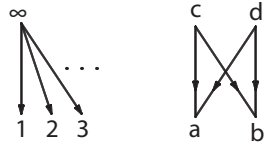}
\end{center}
\caption{A downward tree  $\{\infty \} \sqcup \{1, 2, 3,  \dots \}$ of height one and a $\min$-$\S^1$ $\{a, b, c, d\}$}
\label{tree}
\end{figure}

For downward forests $T_i$ with a base point $t_i \in \min T_i$, 
define a bouquet $\bigvee T_i$ of downward forests by $\bigvee T_i = \bigsqcup T_i /\sim$ and $x \sim y$ if $x = t_i$ and $y = t_j$ for some $i, j$. 
A partial ordered set $S = \{a, b, c, d\}$
is $\min$-$\S^1$ (see Figure \ref{tree}) if 
$\min S = \{ a, b \}$, $\max S = \{ c, d \}$, $a < c$, $a < d$, $b < c$, and $b < d$. 
We say that a pre-ordered set $P$ is $\min$-$\S^1$-free if it have no $\min$-$\S^1$ subsets.  
%as an induced sub-pre-ordered set.

A subset $A$ of a pre-ordered set $P$ is down-directed 
if $\mathop{\downarrow} x \cap \mathop{\downarrow} y \neq \emptyset$ for any points $x, y \in A$. 
A point $x$ of a pre-ordered set $P$ is a top (resp. a bottom) 
if $y \leq x$ (resp. $y \geq x$) for any point $y \in P$. 
An upward forest (resp. pre-chain) $X$ is down-discrete if 
for any $x \in X - \min X$, there is a point $y < x$ such that 
$\mathop{\downarrow} x \cap \mathop{\uparrow} y  = \hat{x} \sqcup \hat{y}$. 
A subset $A$ of a pre-ordered set $P$ is (order-)convex if 
$\mathop{\uparrow} x \cap \mathop{\downarrow} y \subseteq A$ for any pair $x \leq y \in A$. 
%A poset is a downward tree if the upset $\uparrow x$ of any point $x$ of it is a chain.  
%A downward forest is a disjoint union of downward trees. 
%Notice that a poset is a downward forest if and only if 
%$\mathop{\Downarrow}  x \cap \mathop{\Downarrow} y = \emptyset$ 
%for any incomparable points $x, y$ of it. 
Then a subset $A$ is convex if and only if $A = \mathop{\uparrow} A \cap \mathop{\downarrow} A$. 
Indeed, suppose $A$ is convex. 
Then $A \subseteq \mathop{\uparrow} A \cap \mathop{\downarrow} A$. 
For any point $x \in \mathop{\uparrow} A \cap \mathop{\downarrow} A$, 
there are points $a, b \in A$ such that $a \leq x \leq b$. The convexity implies $x \in A$. 
Conversely, suppose that $A = \mathop{\uparrow} A \cap \mathop{\downarrow} A$. 
For any pair $a \leq b \in A$ and any point $x$ with $a < x < b$, 
we have $x \in \mathop{\uparrow} A \cap \mathop{\downarrow} A = A$.

\subsection{Specialization pre-orders of topologies}

The specialization pre-order $\leq_{\tau}$ on a topological space $(X, \tau)$ is defined as follows: 
$ x \leq_{\tau} y $ if  $ x \in \overline{y}$. 
From now on, we equip a topological space $(X, \tau)$ with the specialization pre-order 
$\leq_{\tau}$.  
Then denote by $\mathop{\mathrm{ht}}_{\tau} x$ (resp. $\mathop{\mathrm{ht}}_{\tau} X$) 
the height of a point $x \in X$ (resp. a topological space $(X, \tau)$),  
and 
by $X_{i}$ the subset of height $i$ points. 
% with respect to the specialization pre-order $\leq_{\tau}$.  
%  
For a point $x$ of a topological space $(X, \tau)$, we have that 
$\mathop{\downarrow}  x = \overline{x}$,  
$\mathop{\uparrow}  x = \mathop{\mathrm{ker}} x$, 
and 
$\hat{x} = \mathop{\uparrow}  x \cap \mathop{\downarrow} x$, 
and that 
$\mathop{\Downarrow}  x$ is the derived set of $x$ 
and  
$\mathop{\Uparrow}  x$ is the shell of $x$.   
%
%Recall that each closed subset is  a downset with respect to the specialization pre-order. 
Notice that each closed subset is a downset with respect to the specialization pre-order 
and that the set $\min X$ of minimal points in $X$ is the set of points whose classes are closed. 
Moreover, 
each subset is saturated if and only if it is an upset. 
Indeed, 
since each saturated subset is the intersection of open subsets 
and open subsets are upsets, each saturated subset is an upset. 
Conversely, 
since the downset $\mathop{\downarrow} x$ of a point of of a topological space $X$ is closed, the complement $X - \mathop{\downarrow} x$ is open. 
Therefore the intersection $U = \bigcap_{x \in X - U} X - \mathop{\downarrow} x$ for an upset $U$ of $X$ is saturated. 

Define the saturation $\mathop{\uparrow} A$ of a subset $A$ of $X$ by defined by $\bigcup_{x \in A} \mathop{\uparrow} x$. 
Note that the saturation $\mathop{\mathrm{ker}} A$ of $A$ corresponds to the upset $\mathop{\uparrow} A$. 
Indeed, the definitions imply $\mathop{\uparrow} A \subseteq \mathop{\mathrm{ker}} A$. 
For a point $x \notin \mathop{\uparrow} A$, 
the difference $X - \mathop{\downarrow}x$ is an open \nbd of $A$. 
This implies $\mathop{\uparrow} A \supseteq \mathop{\mathrm{ker}} A$
 
For a subset $A$ of $X$, the intersection $\mathop{\uparrow} A \cap \overline{A}$ of the saturation of $A$ and the closure of $A$ is called the $\lambda$-closure of $A$. 
Then a subset $A$ is $\lambda$-closed if and only if $A$ corresponds to the $\lambda$-closure of $A$ (i.e. $A = \mathop{\uparrow} A \cap \overline{A}$). 
Indeed, suppose $A = \mathop{\uparrow} A \cap \overline{A}$. 
By the definition of $\lambda$-closed subsets, 
the intersection $A = \mathop{\uparrow} A \cap \overline{A}$ is $\lambda$-closed. 
Conversely, suppose that $A$ is $\lambda$-closed. 
Then $A \subseteq \mathop{\uparrow} A \cap \overline{A}$ 
and there is a saturated subset $B$ and a closed subset $C$ with $A = B \cap C$.  
Since a saturated subset containing $A$ contains a saturation of $A$ which is the upset $\mathop{\uparrow} A$, we obtain $\mathop{\uparrow} A \subseteq B$. 
Since the closure of $A$ is the minimal closed subset containing $A$, we have $\overline{A} \subseteq C$. 
Therefore $\mathop{\uparrow} A \cap \overline{A} \subseteq B \cap C = A$.  
Moreover, a $\lambda$-closed subset is order-convex. 
Indeed, for any points $a \leq b$ of a $\lambda$-closed subset $A$ and for any point $x \in X$ with $a \leq x \leq b$, since $\mathop{\downarrow} b = \overline{b} \subseteq \overline{A}$, we have $x \in\mathop{\uparrow} A \cap \overline{A} = A$. 

A topological space $(X, \tau)$ is Artinian if 
$(X, \leq_{\tau})$ satisfy the descending chain condition (i.e. each downward chain is finite).

\subsection{Separation axioms for points}

Let $(X, \tau)$ be a topological space. 
A point $x$ is $T_{-1}$ \cite{C}  (or $T_R$ \cite{W}) if 
it is either closed or 
there is a  \nbd $U$ of it with $U \nsupseteq \overline{x}$. 
A point $x \in X$ is $T_0$ if for any point $y \in X - \{ x \}$, 
there is an open subset $U$ of such that $\{x, y \} \cap U$ is a singleton.  
A point $x \in X$ is $T_{1/2}$ (resp. $T_{1/4}$)
if it is either closed or open (resp. closed or kerneled).    
A point $x \in X$  is $T_D$ \cite{AT} if 
the derived set $\mathop{\Downarrow} x$ is a closed subset
and  
it is $C_0$ \cite{W} if the derived set $\mathop{\Downarrow} x$ is not a union of nonempty closed subsets.   
Obviously, each $T_{1/2}$ topology is  $T_D$.  
For any $\sigma \neq -1$, a point $x$ in $X$ is $S_{\sigma}$ if the point $\hat{x}$ in $\hat{X}$ is $T_{\sigma}$. 
For instance, 
a point $x \in X$ is $S_{1/2}$ if and only if $\hat{x}$ is either an open point or a closed point in $\hat{X}$, 
it is $S_{1}$ if and only if $\hat{x}$ is a closed point in $\hat{X}$, and 
it is $S_{2}$ if and only if for any point $y, x \in X$ with $\hat{x} \neq \hat{y}$, 
there are disjoint open saturated \nbd $U_x, U_y$ of $x, y$.  
%it is $S_{3}$ if 
%for any subset $F \subseteq X$ which does not contain $x$, 
%there are disjoint open saturated \nbd $U_x, U_F$ of $x, F$. 
%
A point $x \in X$ is q-$S_2$ if for any point $y \in X$ with $\hat{x} \neq \hat{y}$, 
either there is a point $z \in X$ such that $x, y  \in \overline{z}$ or 
the pair $x, y$ can be separated by disjoint \nbds respectively. 

A point $x$ of $X$ is $T_{YS}$ \cite{AT} if 
$\mathop{\downarrow}  x \cap \mathop{\downarrow} y$ 
is either $\emptyset$, $\{ x \}$, or $\{ y \}$ for any $y \in X - \{ x \}$. 

In \cite{HM}, 
a point $x \in X$ is $S_{SD}$ if either $\hat{x}$ is closed or 
$\mathop{\Downarrow} \hat{x}$ is closed and is a class of some point, 
it is $S_\delta$ if either $\hat{x}$ is closed or $\mathop{\Downarrow} \hat{x}$ is a closure of some point. 
In \cite{AT}, 
% and \cite{HM}, 
a point $x \in X$ 
%is $S_{YS}$ if $\mathop{\downarrow}  x \cap \mathop{\downarrow}  y$ is either $\emptyset$, $\hat{x}$, or $\hat{y}$ for any point $y \in X$ with $\hat{x} \neq \hat{y}$, 
%it 
is $S_{Y}$ if $\mathop{\downarrow}  x \cap \mathop{\downarrow}  y$ contains at most one class for any $y \in X	$ with $\hat{x} \neq \hat{y}$. 
In \cite{W}, 
a point $x \in X$ is $C_D$ if the derived set $\mathop{\Downarrow} x$ of $x$ is either empty or non-closed, 
%it is $C_0$ if the derived set $\mathop{\Downarrow}  x$ is not a union of nonempty closed subsets, 
it is $C_R$ if the derived set $\mathop{\Downarrow}  x$ contains no nonempty closed subsets, 
and it is $C_N$ if there are no pair of two nonempty disjoint closed subsets in $\mathop{\Downarrow}  x$. 
These axioms satisfies the following relations \cite{W}: 
$T_1 
\Rightarrow 
C_R 
\Rightarrow  
C_0 
\Rightarrow  C_D$ 
and 
$C_R 
\Rightarrow  C_N$. 

\subsection{Separation axioms for topological spaces}

Recall that the separation axiom $S_{\sigma}$ is exactly 
the axiom $T_{\sigma}$ without $T_0$ axiom (i.e. $T_{\sigma} = S_{\sigma} + T_0$) 
for any $\sigma \neq -1$. 
Let $X$ be a topological space. 
A topological space is $T_{1/2}$ (resp. $T_{1/3}, T_{1/4}$) if 
each subset (resp. compact subset, finite subset) is $\lambda$-closed. 
Note that a topological space $X$ is $T_{1/2}$ (resp. $T_{1/4}$) \iff so is each point 
and that  $X$ is $T_{1/3}$ \iff for any compact subset $C$ of $X$ and for any point $x \in X - C$  there is a subset $A$ of $X$ with $C \subseteq A \subseteq X - \{x \}$ 
such that $A$ is closed or open  (see \cite{ADP}). 
A topological space is $T_{-1}$ (resp. $T_0$, $T_D$, q-$S_2$, 
%recurrent, non-wandering, weakly hyperbolic-like, hyperbolic-like, 
etc) if so is each point. 
Note that $\tau$ is $T_0$ \iff $\leq_{\tau}$ is a partial order. 
Moreover, in \cite{HM}, a topological space $X$ is $S_{YY}$ if 
there is a point $p \in X$ such that $\mathop{\downarrow}  x \cap \mathop{\downarrow}  y$ 
is either $\emptyset$, $\hat{x}$, $\hat{y}$, or $\hat{p}$ for any $x,  y \in X$ with $\hat{x} \neq \hat{y}$. 
%In \cite{Y}, it's shown that  a topological space $X$ is $S_{1/4}$ \iff $\mathop{\mathrm{ht}}_{\tau} X \leq 1$.   
%
In \cite{M}, a topological space $X$ is said to be w-$R_0$ (resp. w-$C_0$) if $\bigcap_{x \in X} \overline{x} = \emptyset$ (resp. $\bigcap_{x \in X} \mathop{\mathrm{ker}}_{\tau} x = \emptyset$).   
It's known that $S_{1/2} \Rightarrow \lambda \text{-space } \Rightarrow S_{1/4}$ \cite{CUV}. 
The following relation holds (cf. \cite{ADP}): $S_{1/2} \Rightarrow S_{1/3} \Rightarrow S_{1/4}$.

\subsection{Decompositions}

Let $\F$ be a decomposition on $X$. 
For $x \in X$, denote by $L_x$ or $\F(x)$ the element containing $x$. 
%For a point $x \in X$, denote by $\F(x)$ the element of $\F$ containing $x$. 
%For a subset $A$ of $X$, denote by $\F(A) := \bigcup_{x \in A} \F(x)$. 
A subset $A$ of $X$ is $\F$-saturated if $\F(A) = A$, where $\F(A) = \bigcup_{x \in A} L_x$. 
Then $\F$ is said to be  $S_{\sigma}$ (resp. recurrent, q-$S_2$, etc...) if so is the quotient space $X/{\F}$ of $\F$. 
Define the quotient $X/\hat{\F}$, called the class space of $\F$,  by 
$\hat {\F} := \{ \hat{L}  \mid L \in \F \}$, where $\hat {L} := \{ x \in X \mid \overline{L_x} = \overline{L} \}$. 
Note that the derived set $\mathop{\Downarrow_{\F}}{L} = \mathop{\downarrow_{\F}}{L} - L$ 
for an element $L \in \F$ does not correspond with the superior structure 
$\mathop{\Downarrow_{\hat{\F}}}{L}$ of $L \in \F$ in general, where  
$\mathop{\Downarrow_{\hat{\F}}}{L} := \cup \{ L' \in \F \mid L <_{\F} L' \} = \mathop{\downarrow_{\F}}{L} - L$.  
Note that a decomposition is pointwise almost periodic \iff it is $S_1$ 
and that a decomposition on a compact Hausdorff space is $R$-closed \iff it is $S_2$ \cite{Y3}. 
Moreover, we can obtain all results in this paper without the $T_0$ axiom, by replacing a point $x$ (resp. a separation axiom $T_{i}$) with a class $\hat{x}$ (resp. a separation axiom $S_{i}$).  

Denote by $\tau_{\F} := \{ \F(U) \mid U \in \tau \}$ the set of $\F$-saturation of open subsets. 
If $\tau_{\F}$ becomes a topology, then we call this the saturated topology on $X$. 
We also regard the saturated topology as a topology on the decomposition space $X/\F$, 
where the decomposition space $X/\F$ is defined by the quotient space of $X$ by the following equivalent relation: $x \sim y $ if $\F(x) = \F(y)$. 
In the next section, we show that $\tau_{\F}$ becomes a topology 
if $\F$ is either a foliation or a continuous action of a topological groups. 

In general, 
the set $\tau_{\F}$ of $\F$-saturated subsets is not a topology on the quotient space $X/\F$ even if $\overline{L}$ is $\F$-saturated for any $L \in \F$.  
For instance,  
for a decomposition $\F := \{ \{0 \} \times [0,1] \} \sqcup \{ (x, y) \mid x \neq 0, y \in [0,1] \}$ on $[0,1]^2$,  
the set $\tau_{\F}$ is not a topology.  

\subsection{Foliations}

Let $\F$ be a foliation on a paracompact manifold $M$. 
A subset is $\F$-saturated (resp. $\hat{\F}$-saturated) if it is a union of leaves (resp. leaf classes) of $\F$. 
Define a pre-order $\leq_{\F}$ for $\F$ by 
$L \leq_{\F} L'$ if $\overline{L} \subseteq \overline{L'}$. 
In other words, 
the pre-order $\leq_{\F}$ is the specialization pre-order of the saturated topology of $\F$. 
For a leaf $L \in \F$, define the leaf class $\hat{L} := \cup \{ L' \in \F \mid L =_{\F} L' \}$. 
Then the set of leaves (resp. leaf classes) is a decomposition. 
Denote by $L_x$ the leaf containing $x \in M$. 
The leaf class space $M/\hat{\F}$ is defined by the quotient space of $M$ by the following equivalent relation: 
$x \sim y $ if $\overline{L}_x = \overline{L}_y$. 
Then $M/\hat{\F} = \{ \hat{L} \mid L \in \F \}$. 
%Equip $M/\hat{\F}$ with the quotient topology $\tau_X$, which is $T_0$.  
%The topological space is denoted by $X$ (i.e. $X = (M/\hat{\F}, \tau_X)$).   
Note that the leaf class space $M/\hat{\F}$ is the $T_0$-tification of the leaf space $M/{\F}$.
We identify that $\tau_{\F}$ (resp. $\tau_{\hat{\F}}$) is the set of $\F$-saturated (resp. $\hat{\F}$-saturated) open subsets,  and that $\tau_{\F}$-closed (resp. $\tau_{\hat{\F}}$-closed) subsets are closed $\F$-saturated (resp. $\hat{\F}$-saturated) subsets. 
Denote by $\mathrm{Cl}$ (resp. $\mathrm{P}$) 
the unions of closed, proper (resp. non-closed) leaves. 
For a codimension one foliation, 
denote by $\mathrm{E}$ (resp. $\mathrm{LD}$) the unions of exceptional (resp. locally dense) leaves. 
A foliation is $T_{-1}$ (resp. $T_0$, $T_D$, q-$S_2$, recurrent, etc) if so is the saturated topology of it.

\subsection{Group-actions and flows}

By group-actions,  we mean continuous actions of topological groups on topological spaces. 
For a group-action $v$ and for any point $x$ of $X$, 
denote by $O_v(x)$ the orbit of $x$. 
Recall that a subset of $X$ is saturated with respect to $v$ if it is a union of orbits. 

By a flow, we mean either an $\R$-action on a topological space. 
For a flow $v : \R \times X \to X$, the $\omega$-limit (resp. $\alpha$-limit)  set of a point $x$ is $\omega(x) := \bigcap_{n\in \mathbb{R}}\overline{\{v_t(x) \mid t > n\}}$ 
(resp.  $\alpha(x) := \bigcap_{n\in \mathbb{R}}\overline{\{v_t(x) \mid t < n\}}$), 
where $v_t(x) := v(t, x)$.  
A point $x$ of $S$ is strongly recurrent (resp. recurrent) with respect to $v$ 
if $x \in \omega(x) \cap \alpha(x)$ (resp.  $x \in \omega(x) \cup \alpha(x)$).  
A flow is recurrent if each point is recurrent. 
%
%A point $x$ of a topological space $(X, \tau)$ is proper 
%if 
%$\overline{O_v(x)} - O_v(x)$ is closed. 
Note that 
an orbit $O$ of $v$ is proper if and only if $\mathop{\Downarrow}_{\tau_v} O$ is closed,  
and that a point is recurrent if and only if it is $\tau_v$-closed or $\tau_v$-non-proper, 
where $\tau_v$ is the quotient topology of the orbit space $X/v$. 

A closed subset $F$ of $X$ is invariant if $v_t(F) = F$ for any $t \in \R$. 
A closed invariant set is minimal if it contains no proper closed invariant subsets. 
%A minimal set is isolated if there is a \nbd of it which contains no other minimal set. 
A minimal set $F$ is an attractor of $v$ if there is a \nbd $U  \supsetneq F$, called a basin of attraction,  with $F = \bigcap_{t > 0}v_t(U)$ such that $v_t(U) \subseteq U$ for any $t > 0$. 
A repellor is a reversed time attractor, a sink is an attractor which is a singleton, and a source is an repellor which is a singleton. 
A minimal set $F$ of $v$ is a saddle set \cite{B} if there exists a closed \nbd $U$ of $F$ such that 
$\overline{\{ x \in {U} - F \mid  O^{+}(x) \not\subseteq {U}, O^{-}(x) \not\subseteq {U}\}}\cap F \neq \phi$, where $O^+(x) := \{v_t(x) \mid t \geq 0 \}$ and $O^-(x) := \{v_t(x) \mid t \leq 0 \}$. 
A saddle set $F$ has countably many separatrices if the cardinality of orbits whose $\alpha$-limit set or $\omega$-limit set is $F$ is countable.

\subsection{Dynamical-system-like notions for points}

%Let $(X, \tau)$ be a topological space. 
We say that a point is ($\tau$)-recurrent if it is $T_0$ or non-$T_D$ (i.e. either it is closed or the derived set is not closed), 
it is non-wandering if there is a subset which consists of ($\tau$)-recurrent points and whose closure is a \nbd of it,  
and it is proper if the derived set is closed. 
Obviously, each $S_1$ point is recurrent. 
A point $x$ of a topological space $X$ is ($\tau$-)exceptional if $x$ is neither maximal nor $T_D$, 
it is weakly ($\tau$-)non-indifferent (or attracting/repelling) 
if $\mathop{\uparrow} x$ is open, $\mathop{\Uparrow} \hat{x}$ consists of $T_D$ points,  and $\mathop{\Uparrow} x \neq \emptyset$,  
it is weakly ($\tau$-)saddle-like if either $\mathop{\uparrow} x$ is not open or there is an element $y > x$ such that 
$x \in \overline{(x,y] - \{ y \}}$, 
%$\mathop{\Downarrow} y$ does not consist of at least two minimal classes,   
it is weakly ($\tau$-)hyperbolic-like if it is either weakly non-indifferent or weakly saddle-like, 
it is non-indifferent (resp. saddle-like) if it is weakly non-indifferent (resp. weakly saddle-like) and $\mathop{\Uparrow} \hat{x} \neq \emptyset$ contains $T_D$ points, 
it is hyperbolic-like if it is non-indifferent or saddle-like. 
Note that sinks and sources for a flow are non-indifferent but not saddle-like with respect to the orbit space. 
%Here sinks are attractors which are singletons and sources are reversed time sinks. 
%
A topological space or a topology is of Anosov type if the set $\min X \neq X$ of minimal points is dense and there is a point whose closure is the whole space $X$. 
%such that each maximal point is not nowhere dense. 
%
Note that the orbit (class) space of a hyperbolic toral automorphism is a topological space of Anosov type.

%A topological space or a topology is recurrent (resp. $S_{\sigma}$, weakly hyperbolic-like, ...) if so is each point. 

\section{Saturated topologies for decompositions}

Let  $\F$ be a decomposition on a topological space $(X, \tau)$. 
We observe a following statement. 

\begin{lemma}
The set $\tau_{\F}$ of saturations of open subsets is a topology
\iff  
$\tau_{\F}$ is invariant under taking finite intersections 
$(\mathrm{i.e.}$ the intersection of any two saturations of open subsets is a saturation of 
an open subset$)$. 
\end{lemma}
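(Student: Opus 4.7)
The lemma is essentially a check that the two nontrivial topology axioms one gets "for free" are truly automatic for $\tau_{\F}$, so that only stability under finite intersections is a genuine condition. Thus the plan is to verify the forward direction by definition, and to verify the three topology axioms one by one in the reverse direction.

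The forward direction is immediate: if $\tau_{\F}$ is a topology, then in particular it is closed under finite intersections of its members.

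For the reverse direction I would first observe that $\emptyset = \F(\emptyset) \in \tau_{\F}$ and $X = \F(X) \in \tau_{\F}$, using that $\emptyset, X \in \tau$ and that the $\F$-saturation of $X$ is $X$ itself (since every point of $X$ lies in some element of $\F$). Next I would check stability under arbitrary unions: given a family $\{ \F(U_\alpha) \}_{\alpha}$ with $U_\alpha \in \tau$, I use the pointwise definition $\F(A) = \bigcup_{x \in A} L_x$ to observe
\[
\bigcup_{\alpha} \F(U_\alpha) = \bigcup_{\alpha} \bigcup_{x \in U_\alpha} L_x = \bigcup_{x \in \bigcup_\alpha U_\alpha} L_x = \F\Bigl( \bigcup_\alpha U_\alpha \Bigr),
\]
and since $\bigcup_\alpha U_\alpha \in \tau$, this union lies in $\tau_{\F}$. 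The third axiom, closure under finite intersections, is exactly the hypothesis of the reverse direction.

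There is no real obstacle: the content of the lemma is just the observation that saturation commutes with arbitrary unions but not with intersections in general, so only the intersection axiom can fail. The only thing to be careful about is phrasing the union calculation without implicitly assuming that the $L_x$ are pairwise distinct or that the index set has any structure; writing it as a double union over points, as above, avoids this.
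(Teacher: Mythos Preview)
Your proof is correct. The paper itself does not give a proof of this lemma; it is stated as an observation and left to the reader. Your argument is exactly the natural one: the saturation operator $\F(\cdot)$ commutes with arbitrary unions (which you verify by the double-union computation), and it sends $\emptyset$ to $\emptyset$ and $X$ to $X$, so the only topology axiom that can possibly fail for $\tau_{\F}$ is closure under finite intersections.
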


Now we state the sufficient condition that $\tau_{\F}$ becomes a topology.

\begin{lemma}\label{lem:001}
The set $\tau_{\F}$ of saturations of open subsets is contained in the topology $\tau$  
\iff  
the closure of each saturated subset is $\F$-saturated. 
In any case, the set $\tau_{\F}$ is a topology. 
%if $\tau_{\F}$ is a topology, then $\overline{L}$ is $\F$-saturated for any $L \in \F$.  
\end{lemma}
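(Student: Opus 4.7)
The plan is to establish the equivalence by a complementation argument, using that complements and arbitrary unions of $\F$-saturated subsets are again $\F$-saturated. The concluding assertion that $\tau_{\F}$ is a topology will then fall out of either equivalent condition.

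For $(\Rightarrow)$, I would start from an $\F$-saturated subset $A$ and consider the open set $X \setminus \overline{A}$. Under the hypothesis $\tau_{\F} \subseteq \tau$, its $\F$-saturation $\F(X \setminus \overline{A})$ is open, and is disjoint from $A$ because $A$ is $\F$-saturated and contained in $\overline{A}$, so no leaf through a point of $A$ meets $X \setminus \overline{A}$. Hence $X \setminus \F(X \setminus \overline{A})$ is a closed, $\F$-saturated superset of $A$, and therefore of $\overline{A}$; the reverse inclusion is automatic from $X \setminus \overline{A} \subseteq \F(X \setminus \overline{A})$. Equality yields $\overline{A} = X \setminus \F(X \setminus \overline{A})$, an $\F$-saturated set.

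For $(\Leftarrow)$, I would feed an open $U$ into the hypothesis via its $\F$-saturated complement $X \setminus \F(U)$: the closure $\overline{X \setminus \F(U)}$ is then $\F$-saturated by hypothesis, so $V := X \setminus \overline{X \setminus \F(U)}$ is open and $\F$-saturated. Since $U$ is open and disjoint from $X \setminus \F(U)$, it is disjoint from the closure as well, giving $U \subseteq V \subseteq \F(U)$; minimality of $\F(U)$ among $\F$-saturated sets containing $U$ then forces $V = \F(U)$, so $\F(U)$ is open.

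For the final clause, under either equivalent condition every element of $\tau_{\F}$ lies in $\tau$. Arbitrary unions in $\tau_{\F}$ are handled by $\bigcup_i \F(U_i) = \F(\bigcup_i U_i)$, and a binary intersection $\F(U) \cap \F(V)$ is simultaneously open and $\F$-saturated, so it equals its own $\F$-saturation and lies in $\tau_{\F}$; together with $\emptyset, X \in \tau_{\F}$, this gives a topology. The one delicate point I anticipate is the step $U \subseteq V$ in $(\Leftarrow)$, which relies on the standard fact that an open set disjoint from $B$ is disjoint from $\overline{B}$, applied to $B = X \setminus \F(U)$; everything else is routine bookkeeping with complements.
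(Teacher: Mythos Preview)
Your proof is correct and follows essentially the same complementation strategy as the paper: for $(\Rightarrow)$ both arguments saturate $X\setminus\overline{A}$ and use that this open saturated set misses $A$ (the paper phrases it as a contradiction, you phrase it as the direct identity $\overline{A}=X\setminus\F(X\setminus\overline{A})$); for $(\Leftarrow)$ both pass to $F=X\setminus\F(U)$, use openness of $U$ to get $U\cap\overline{F}=\emptyset$, and then use that $\overline{F}$ is saturated to conclude $\F(U)$ is open (the paper deduces $\overline{F}\subseteq F$ directly, you sandwich $U\subseteq V\subseteq\F(U)$ and invoke minimality of $\F(U)$, which is the same step read contrapositively). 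Your explicit verification of the topology axioms at the end is a small addition the paper leaves implicit.
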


\begin{proof} 
Suppose that $\tau_{\F}$ is a topology. 
Assume that there is an $\F$-saturated subset $A$ 
whose closure is not $\F$-saturated. 
Then there is a point $x \in \F(\overline{A}) - \overline{A}$. 
Put $U := X - \overline{A}$. 
This $U$ is an open \nbd of $x$. 
Since $\tau_{\F} \subseteq \tau$, 
the saturation $V := \F(U)$ is open with 
$V \cap A = \emptyset$. 
Then $V \cap \overline{A} = \emptyset$. 
Since $V$ is $\F$-saturated, 
we obtain $V \cap \F(\overline{A}) = \emptyset$. 
This contradicts that 
$V$ is a \nbd of $x \in \F(\overline{A})$. 
Thus $\overline{A}$ is $\F$-saturated for any $\F$-saturated subset $A \subseteq X$. 
Conversely, 
suppose that the closure of a saturated subset is $\F$-saturated. 
Fix any open subset $B$. 
Set $F := X - \F(B)$. 
Since $B \cap F = \emptyset$, we have $B \cap  \overline{F} = \emptyset$. 
The hypothesis implies that $\overline{F}$ is $\F$-saturated. 
Then $\F(B) \cap  \overline{F} = \emptyset$ and so $\F(B) \cap  \overline{X - \F(B)} = \emptyset$. 
This implies that $X - \F(B)$ is closed and so $\F(B)$ is open. 
Therefore $\tau_{\F} \subseteq \tau$. 
\end{proof}

%By group-actions,  we mean continuous actions of topological groups on topological spaces. 
In the cases of foliations or group-actions, we obtain the following statement.

\begin{proposition}\label{cor:02}
Suppose that 
$\F$ is either 
a foliation or the set of orbits of a group-action. 
Then the set $\tau_{\F}$ of saturations of open subsets is a topology on the quotient space $X/\F$.  
Moreover, 
the set $\tau_{\F}$ corresponds to the quotient topology. 
\end{proposition}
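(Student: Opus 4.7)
The plan is to reduce the problem to Lemma \ref{lem:001}: it suffices to verify that for each of the two cases (foliations and group-actions), the closure of every $\F$-saturated subset is again $\F$-saturated. Once this is established, Lemma \ref{lem:001} immediately gives that $\tau_{\F}$ is a topology contained in $\tau$, and then the identification with the quotient topology on $X/\F$ is a formal matter of unwinding definitions.

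For the group-action case, the verification is essentially immediate from joint continuity. Let $A$ be a union of orbits and let $x \in \overline{A}$. Pick any group element $g$. I would take a net $(a_\alpha)$ in $A$ with $a_\alpha \to x$; continuity of the action gives $g \cdot a_\alpha \to g \cdot x$, and since $A$ is a union of orbits, $g \cdot a_\alpha \in A$. Hence $g \cdot x \in \overline{A}$, so $\overline{A}$ is saturated.

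For the foliation case, the key tool is the local product structure. Suppose $A$ is $\F$-saturated, $x \in \overline{A}$, and $y \in L_x$. I would first show that the plaque $P_x$ of $x$ in a foliated chart is contained in $\overline{A}$: writing the chart as $P \times T$ and picking a net $a_\alpha \to x$ in $A$, the full plaque through each $a_\alpha$ lies in $A$ by saturation, and these plaques converge (as subsets, in an obvious sense through the product structure) to $P_x$, so every point of $P_x$ is a limit of points of $A$. Then I would propagate this along a plaque chain from $x$ to $y$, inherent in the connectedness of $L_x$: each successive plaque meets the previous one, inherits membership in $\overline{A}$ from that meeting point, and the chain eventually reaches $y$. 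Hence $L_x \subseteq \overline{A}$, proving $\overline{A}$ is $\F$-saturated. The main obstacle (and the only one requiring care) is making this plaque-chain argument rigorous in both directions—establishing the initial plaque lies in $\overline{A}$, and ensuring the propagation works through any chain of foliated charts.

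Granted that $\tau_{\F}$ is a topology on $X$, I would conclude by showing $\tau_{\F}$ corresponds to the quotient topology on $X/\F$ under the map $\pi : X \to X/\F$. For the inclusion $\tau_{\F} \subseteq \tau_q$: if $W = \F(U) \in \tau_{\F}$, then $\pi^{-1}(\pi(W)) = W$ is open in $X$ by Lemma \ref{lem:001}, so $\pi(W)$ is open in the quotient topology. Conversely, if $V \subseteq X/\F$ is open in the quotient topology, then $\pi^{-1}(V)$ is an open saturated subset of $X$, hence equals $\F(\pi^{-1}(V))$ and so $V$ corresponds to an element of $\tau_{\F}$. This identifies the two topologies and completes the proof.
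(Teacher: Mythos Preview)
Your proposal is correct and follows essentially the same route as the paper: both reduce to Lemma~\ref{lem:001} by verifying that closures of $\F$-saturated sets are $\F$-saturated, and then conclude that $\tau_{\F}$ is a topology coinciding with the quotient topology. The paper's own proof is a single sentence asserting the slightly stronger identity $\overline{\F(A)} = \F(\overline{A})$ without justification, whereas you spell out the verification separately for group-actions (via continuity) and foliations (via plaque chains); your version is more detailed but the underlying idea is the same.
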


\begin{proof}
Since the closure of the saturation of any subset corresponds to the saturation of the closure of it, Lemma \ref{lem:001} implies the assertion. 
\end{proof}

%%
%For the saturated topology $\tau_{\F}$, 
%write $\overline{A}^{\F} := \overline{A}^{\tau_{\F}}$ 
%for 
%%the sake of 
%simplicity. 
%In other words, 
%denote by $\overline{A}^{\F}$ the closure of $A$ with respect to the saturated topology $\tau_{\F}$. 
%

\section{Characterizations of Lower separation axioms}

First,  we state observations of $T_{-1}, T_0, T_{1/4}, T_{1/3}$ and $T_{1/2}$ using the terms of orders. 

\begin{lemma}\label{lem:01-a}
A point $x$ of a topological space $X$ is $T_0$ \iff $|\hat{x}| = 1$. 
\end{lemma}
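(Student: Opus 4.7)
The plan is to unpack both sides of the equivalence directly against the definitions. Recall that $y \in \hat{x}$ means $\overline{y} = \overline{x}$, and that the specialization relation gives $y \in \overline{x} \Leftrightarrow y \leq_\tau x$. A key preliminary observation I will state (or quietly use) is that $\overline{x} = \overline{y}$ if and only if both $x \in \overline{y}$ and $y \in \overline{x}$, since two closures containing one another must be equal.

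For the forward direction, I would assume that $x$ is $T_0$ and pick an arbitrary $y \in \hat{x}$, aiming to conclude $y = x$. Suppose for contradiction $y \neq x$; then the $T_0$ condition supplies an open $U$ with $|\{x,y\} \cap U| = 1$. Say $x \in U$ and $y \notin U$ (the other case is symmetric). Then $X - U$ is a closed set containing $y$ but not $x$, so $\overline{y} \subseteq X - U$, contradicting $x \in \overline{x} = \overline{y}$.

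For the reverse direction, assume $\hat{x} = \{x\}$ and take any $y \in X - \{x\}$. Since $y \notin \hat{x}$, we have $\overline{x} \neq \overline{y}$, so by the preliminary observation either $x \notin \overline{y}$ or $y \notin \overline{x}$. In the first case $X - \overline{y}$ is an open neighborhood of $x$ missing $y$; in the second case $X - \overline{x}$ is an open neighborhood of $y$ missing $x$. Either way $\{x,y\} \cap U$ is a singleton for the chosen $U$, so $x$ is $T_0$.

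There is no real obstacle here: the lemma is essentially a dictionary entry translating the point-wise $T_0$ axiom into the order-theoretic statement that the equivalence class under $\leq_\tau \cap \geq_\tau$ is trivial. The only step requiring any care is the elementary observation that $\overline{x} = \overline{y}$ is equivalent to the conjunction $x \in \overline{y}$ and $y \in \overline{x}$, which is what lets the reverse direction split cleanly into the two separating cases.
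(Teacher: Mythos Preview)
Your proposal is correct and follows essentially the same route as the paper's proof: both unwind the definitions via the equivalence $\overline{x}=\overline{y}\Leftrightarrow (x\in\overline{y}\text{ and }y\in\overline{x})$, and in the reverse direction both produce the separating open set as the complement of the appropriate point-closure. The only cosmetic difference is that you phrase the forward direction as a contradiction while the paper argues it directly through the specialization preorder.
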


\begin{proof}
Suppose that $x$ is $T_0$. 
For any point $y \neq x$ of $X$, we have either $y \notin \overline{x}$ or $x \notin \overline{y}$. 
This means that $y \nleq x$ or $x \nleq y$. 
Thus $y \notin \hat{x}$. 
Conversely, suppose that $|\hat{x}| = 1$.  
Then $y \nleq x$ or $x \nleq y$. 
Hence $y \notin \overline{x}$ or $x \notin \overline{y}$. 
This shows that $x$ is $T_0$. 
\end{proof}

%We state observations of $T_{-1}, T_{1/4}, T_{1/3}$ and $T_{1/2}$ 
%using the terms of orders. 

\begin{lemma}\label{lem:00} 
A topological space $X$ is $T_{-1}$ \iff the class of each minimal point of $X$ is closed (i.e. $\min X$ is $T_0$). 
\end{lemma}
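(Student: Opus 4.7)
The plan is to characterize failure of the $T_{-1}$ axiom at a single point in terms of the specialization preorder, and then read off the global statement.

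First I would unwind what it means for a point $x$ to fail $T_{-1}$: $\{x\}$ is not closed and every open \nbd $U$ of $x$ satisfies $U \supseteq \overline{x}$. The latter says $\overline{x} \subseteq \mathop{\mathrm{ker}} x$, i.e.\ $\mathop{\downarrow} x \subseteq \mathop{\uparrow} x$; intersecting with $\mathop{\downarrow} x$ forces $\mathop{\downarrow} x = \mathop{\uparrow} x \cap \mathop{\downarrow} x = \hat{x}$, which is precisely the condition $x \in \min X$. Thus $x$ violates $T_{-1}$ \iff $x$ is minimal and $\{x\}$ is non-closed. Equivalently, $X$ is $T_{-1}$ \iff every minimal point of $X$ has a closed singleton. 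Conversely, any non-minimal $x$ automatically satisfies $T_{-1}$, because choosing $y < x$ strictly yields the open \nbd $X - \overline{y}$ of $x$ which misses $y \in \overline{x}$; this remark is implicit in the above biconditional but worth noting explicitly.

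To obtain the parenthetical reformulation I would note that for $x \in \min X$ one has $\overline{x} = \mathop{\downarrow} x = \hat{x}$, so $\{x\}$ is closed \iff $\hat{x} = \{x\}$. Moreover every member of $\hat{x}$ is itself minimal (same closure as $x$), so $\hat{x} \cap \min X = \hat{x}$; hence the condition ``$\hat{x} = \{x\}$ for all $x \in \min X$'' is precisely the $T_0$ axiom for the subspace $\min X$, which is the stated reformulation that the class of each minimal point is a closed point.

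The only step that needs real attention is the translation of the neighborhood condition ``every $U \ni x$ contains $\overline{x}$'' into the order-theoretic statement $\mathop{\downarrow} x \subseteq \mathop{\uparrow} x$, which rests on the identification $\mathop{\mathrm{ker}} x = \mathop{\uparrow} x$ recorded earlier in the preliminaries. Once this is in hand, the remainder is a routine unpacking of definitions and I anticipate no significant obstacle.
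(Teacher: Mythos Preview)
Your argument is correct and follows essentially the same route as the paper's proof: both directions hinge on the observation that a non-minimal point automatically satisfies $T_{-1}$ via the open \nbd $X - \mathop{\downarrow} y$ for some $y < x$, while for a minimal point the condition $\overline{x} = \hat{x}$ forces $T_{-1}$ to be equivalent to $\{x\}$ being closed. The only cosmetic difference is that you package the forward direction as the order-theoretic identity $\mathop{\downarrow} x \subseteq \mathop{\uparrow} x \Rightarrow \mathop{\downarrow} x = \hat{x}$ via $\mathop{\mathrm{ker}} x = \mathop{\uparrow} x$, whereas the paper argues by contradiction (a witness $y \in \overline{x} \setminus U$ would satisfy $y \notin \hat{x}$); the content is the same.
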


\begin{proof}  
Suppose that $X$ is $T_{-1}$.  
Fix any minimal point $x$. 
Then $\overline{x} = \hat{x}$. 
Assume that $x$ is not closed. 
Then there is a \nbd $U$ of $x$ such that $U \nsupseteq \overline{x}$. 
Hence there is a point $y \in \overline{x} \setminus U$. 
This implies that $x \notin \overline{ y }$ and so $y \notin \hat{x} = \overline{x}$. 
This is a contradiction.  
Conversely, suppose that each minimal point of $X$ is closed. 
Fix any non-closed point $x$ of $X$. 
Then there is a point $y <  x$ and so 
$U :=X - \mathop{\downarrow} y $ is an open \nbd of $x$ with $U \nsupseteq \overline{x}$.  
\end{proof}

\begin{lemma}\label{lem:01} 
Let $(X, \tau)$ be a $T_0$ topological space. 
The following statements hold: 
\\
1. 
$\tau$ is $T_{1/4}$
\iff 
$\mathop{\mathrm{ht}}_{\tau} X \leq 1$.  
\\
2. 
$\tau$ is $T_{1/3}$
\iff 
for each compact subset $C$ of $X$ 
there are a closed subset $F$ and 
a downset $D$ such that 
$C = F \setminus D$. 
\\
3. 
$\tau$ is $T_{1/2}$
\iff 
$\mathop{\mathrm{ht}}_{\tau} X \leq 1$ and 
each height $1$ point is open.   
\end{lemma}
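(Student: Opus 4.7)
The plan is to translate each separation axiom into the language of the specialization partial order $\leq_{\tau}$ (which is a partial order because $X$ is $T_0$, by Lemma \ref{lem:01-a}), using the identifications $\overline{x} = \mathop{\downarrow} x$ and $\mathop{\mathrm{ker}} x = \mathop{\uparrow} x$ from the preliminaries. The four facts I will invoke repeatedly are: (i) $x$ is closed iff $x$ is minimal; (ii) $x$ is kerneled iff $\mathop{\uparrow} x = \{x\}$, i.e., $x$ is maximal; (iii) $\{x\}$ open forces $\mathop{\mathrm{ker}} x \subseteq \{x\}$, hence $x$ is maximal (the converse fails in general); and (iv) a subset $A$ is $\lambda$-closed iff $A = F \setminus D$ for some closed set $F$ and some downset $D$. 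The last statement follows from the characterization $A = \mathop{\uparrow} A \cap \overline{A}$ recorded in the preliminaries together with the fact that saturated subsets coincide with upsets.

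For part 1, I will use the pointwise version of $T_{1/4}$ stated in the preliminaries: each point is closed or kerneled. By (i) and (ii) this says each point is minimal or maximal, which in a partially ordered set is equivalent to the nonexistence of a chain $x < y < z$, i.e., $\mathop{\mathrm{ht}}_{\tau} X \leq 1$. For part 3, $T_{1/2}$ says each point is closed or open; by (i) and (iii) every such point is again minimal or maximal, so the same chain argument yields $\mathop{\mathrm{ht}}_{\tau} X \leq 1$. Moreover a height-$1$ point is non-minimal, hence non-closed, so $T_{1/2}$ forces it to be open. The converse direction is immediate: in a $T_0$ space of height at most one, each point is either minimal (hence closed by (i)) or of height $1$ (hence open by the assumption).

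For part 2, $T_{1/3}$ asserts that every compact subset is $\lambda$-closed, which by (iv) is exactly the condition that each compact $C$ admits a decomposition $C = F \setminus D$ with $F$ closed and $D$ a downset. The forward direction takes $F := \overline{C}$ and $D := X \setminus \mathop{\uparrow} C$, using the identity $C = \mathop{\uparrow} C \cap \overline{C}$; the reverse direction observes that $X \setminus D$ is an upset, hence saturated, so $F \setminus D = F \cap (X \setminus D)$ exhibits $C$ as an intersection of a closed and a saturated subset.

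The one subtle point, and the source of the asymmetry between the three statements, is the gap in (iii) between $x$ being maximal and $\{x\}$ being open: the former only yields $\mathop{\mathrm{ker}} x = \{x\}$ as an intersection of open neighborhoods. This is precisely why part 3 requires the extra hypothesis that every height-$1$ point be open; the one-point compactification of a countable discrete space, where only cofinite sets are neighborhoods of the point at infinity, furnishes a $T_0$ space of height $1$ that is not $T_{1/2}$. Apart from keeping this distinction in mind, all three equivalences reduce to routine translations through the order-theoretic dictionary assembled in the preliminaries.
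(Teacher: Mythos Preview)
Your proof is correct and follows essentially the same approach as the paper: translating each axiom through the dictionary closed $\leftrightarrow$ minimal, kerneled $\leftrightarrow$ maximal, saturated $\leftrightarrow$ upset, and $\lambda$-closed $\leftrightarrow$ (closed) $\cap$ (complement of downset). Your treatment is in fact slightly more explicit than the paper's (naming $F=\overline{C}$ and $D=X\setminus\mathop{\uparrow} C$ in part~2, and supplying the one-point-compactification example to illustrate the gap between maximal and open in part~3), but the underlying arguments coincide.
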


\begin{proof} 
Note that 
the set $\min X$ of minimal points with respect to the specialization order $\leq_{\tau}$ is 
the set of closed points. 

1. 
%First, 
%we show the first assertion.  
Suppose that 
$\tau$ is $T_{1/4}$.  
Fix any point $x \in X$. 
Then  
$x$ is closed or kerneled. 
If $x$ is closed, 
then 
$\mathop{\mathrm{ht}}_{\tau} x = 0$. 
It $x$ is not closed, then $\{x \} = \mathop{\mathrm{ker}}_{\tau} x = \mathop{\uparrow} x$. 
Therefore $x \in \min X \cup \max X$. 
This implies 
$X = \max X  \cup \min X$ 
and so 
$\mathop{\mathrm{ht}}_{\tau} X \leq 1$. 
%Hence $\mathop{\mathrm{ht}}_{\tau} X \leq 1$. 
%
Conversely, 
suppose that $\mathop{\mathrm{ht}}_{\tau} X \leq 1$. 
Fix any point $x \in X$. 
If $\mathop{\mathrm{ht}}_{\tau} x = 0$, 
then $x$ is closed. 
Otherwise $\mathop{\mathrm{ht}}_{\tau} x = 1$. 
For any point $y \neq x \in X$, 
we have 
$y \notin \mathop{\uparrow} x = \mathop{\mathrm{ker}}_{\tau} x$.  
%This implies 
%that 
Then $\mathop{\mathrm{ker}}_{\tau} x = \{ x \}$ 
and so $x$ is kerneled. 
Thus $\tau$ is $T_{1/4}$.  

2. 
Note that 
each saturated set is of form $X - D$ for some downset $D \subseteq X$. 
Suppose that 
$\tau$ is $T_{1/3}$. 
Fix a compact subset $C$ of $X$. 
Then $C$ is $\lambda$-closed. 
Since a $\lambda$-closed subset is 
the intersection of 
a saturated subset and 
a closed subset, 
there are a closed subset $F$ and 
a downset $D$ such that 
$C = F \setminus D$. 
Conversely, suppose that for each compact subset $C$ of $X$ there are a closed subset $F$ and a downset $D$ such that $C = F \setminus D$. 
Fix $C$, $F$, and $D$ as above. 
Since the complement $X - D$ is a saturated subset and $C = F \setminus D = F \cap (X - D)$, 
the compact subset $C$ is $\lambda$-closed. 

3. 
%Second, 
Recall that 
$\tau$ is $T_{1/2}$ 
\iff 
each point of it is either open or closed. 
Since each height $0$ point is closed, 
the first assertion 
implies the characterization of $T_{1/2}$. 
\end{proof}

By the definition of $S_{\sigma}$, 
we have the following statement. 

\begin{theorem}\label{cor} 
Let $(X, \tau)$ be a topological space. 
The following statements hold: 
\\
1. 
$\tau$ is $S_{1/4}$
\iff 
$\mathop{\mathrm{ht}}_{\tau} X \leq 1$.  
\\
2. 
$\tau$ is $S_{1/3}$
\iff 
for each compact subset $C$ of the class space $\hat{X}$ 
there are a closed subset $F$ of $\hat{X}$ and 
a downset $D$ of $\hat{X}$ such that 
$C = F \setminus D$. 
\\
3. 
$\tau$ is $S_{1/2}$
\iff 
$\mathop{\mathrm{ht}}_{\tau} X \leq 1$ and 
each height $1$ class is open.   
\end{theorem}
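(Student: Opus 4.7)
The plan is to reduce Theorem \ref{cor} to Lemma \ref{lem:01} applied to the class space $\hat{X}$. By the convention stated just before the theorem, $X$ is $S_{\sigma}$ if and only if $\hat{X}$ is $T_{\sigma}$ (for $\sigma \neq -1$), and $\hat{X}$ is always $T_0$ because its specialization preorder is the antisymmetric quotient of $\leq_{\tau}$, so Lemma \ref{lem:01} applies to $(\hat{X}, \hat{\tau})$.

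First I would verify the height identity $\mathop{\mathrm{ht}}_{\tau} X = \mathop{\mathrm{ht}}_{\hat{\tau}} \hat{X}$. Since a chain (in the sense used in the paper) is totally ordered in the antisymmetric sense, any chain in $X$ contains at most one representative from each class $\hat{x}$, hence projects to a chain of the same cardinality in $\hat{X}$; conversely, a chain in $\hat{X}$ lifts by any choice of representatives to a chain of the same cardinality in $X$. Thus heights of points in $X$ equal heights of their classes in $\hat{X}$, and the supremum is preserved. Part 1 is then immediate: $X$ is $S_{1/4}$ iff $\hat{X}$ is $T_{1/4}$ iff $\mathop{\mathrm{ht}}_{\hat{\tau}} \hat{X} \leq 1$ iff $\mathop{\mathrm{ht}}_{\tau} X \leq 1$.

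Part 2 follows by quoting Lemma \ref{lem:01}(2) verbatim for $\hat{X}$: the compact subsets, closed subsets, and downsets are exactly those of the class space $\hat{X}$. Part 3 is the $T_{1/2}$ case of Lemma \ref{lem:01} applied to $\hat{X}$; a height $1$ point of $\hat{X}$ is by definition a height $1$ class of $X$, and it is $\hat{\tau}$-open iff the corresponding class is $\tau$-open (since the quotient map to $\hat{X}$ sends the class to a singleton and open sets in $\hat{X}$ correspond to saturated open sets in $X$).

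The only non-mechanical step is the height identity, but it is forced by the definitions (chains being antisymmetric totally ordered subsets). Once that is in place, everything else is a direct translation of Lemma \ref{lem:01} through the $T_0$-identification, so there is no real obstacle.
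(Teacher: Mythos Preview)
Your proposal is correct and follows exactly the approach the paper intends: the paper presents Theorem \ref{cor} as an immediate consequence of Lemma \ref{lem:01} applied to the $T_0$-identification $\hat{X}$, with only the remark ``By the definition of $S_{\sigma}$'' in lieu of proof. Your extra verification of the height identity $\mathop{\mathrm{ht}}_{\tau} X = \mathop{\mathrm{ht}}_{\hat{\tau}} \hat{X}$ and the open-class correspondence simply fills in the details the paper leaves implicit.
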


Note that 
the specialization order of the disjoint union $(X, \tau_X) := \bigsqcup_{\mu} X_{\mu}$ 
is the union of the specialization orders of $(X_{\mu}, \tau_{\mu})$ as 
subsets of  the product space $X \times X$ 
(i.e. $\leq_{\tau_X} = \bigsqcup_{\mu} \leq_{\tau_{\mu}}$), 
and that 
a compact subset $C$ of 
$X$ 
%of  topological spaces $X_{\mu}$
is contained in some finite disjoint union $\bigsqcup_{j =1}^k X_{\mu_j}$
such that 
each $C \cap X_{\mu_j}$ is compact in $X_{\mu_j}$. 
Combining with 
the above characterizations, 
we obtain 
the following invariance under arbitrary disjoint unions. 

\begin{proposition}\label{lem:037} 
Let $X_{\mu}$ be topological spaces and 
$i = -1, 1/2, 1/3$ or  $1/4$.  
Then 
the disjoint union $\bigsqcup_{\mu} X_{\mu}$ is  $T_i$ 
\iff 
$X_{\mu}$ is $T_i$ for any $\mu$. 
\end{proposition}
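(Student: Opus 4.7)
The plan is to treat each value of $i$ separately by invoking the order-theoretic characterizations of Lemmas \ref{lem:00} and \ref{lem:01}, together with the two structural remarks stated immediately before the proposition: the specialization order on $\bigsqcup_\mu X_\mu$ is the disjoint union of the specialization orders on the $X_\mu$, and every compact subset $C$ of $\bigsqcup_\mu X_\mu$ is contained in some finite subunion $\bigsqcup_{j=1}^k X_{\mu_j}$ with each slice $C \cap X_{\mu_j}$ compact in $X_{\mu_j}$. Note also that each $X_\mu$ is clopen in $\bigsqcup_\mu X_\mu$, so closedness, openness, and closure of a subset of $X_\mu$ are the same whether computed in $X_\mu$ or in the disjoint union. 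I would first dispose of the $T_0$ hypothesis needed by Lemma \ref{lem:01}: two points in distinct components are separated by the clopen component, while two points in the same component inherit $T_0$-separation from that component, so $T_0$ is componentwise.

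For $i = -1$, by Lemma \ref{lem:00} the property is that every specialization-minimal point has a closed class. Since the specialization order is a disjoint union, a point of $X_\mu$ is minimal in $\bigsqcup X_\mu$ iff it is minimal in $X_\mu$, and because $X_\mu$ is clopen, its class in $\bigsqcup X_\mu$ equals its class in $X_\mu$ and is closed in one iff in the other. For $i = 1/4$ (Lemma \ref{lem:01}.1), the condition $\mathrm{ht}_\tau X \leq 1$ is computed entirely from the specialization order, and $\mathrm{ht}_\tau \bigsqcup_\mu X_\mu = \sup_\mu \mathrm{ht}_{\tau_\mu} X_\mu$, so the condition holds for the disjoint union iff it holds for every component. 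For $i = 1/2$ (Lemma \ref{lem:01}.3), I would combine this with the observation that a height-$1$ singleton is open in the disjoint union iff it is open in its component.

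The case $i = 1/3$ is the only one needing a little care and uses the finite-subunion property of compact sets. By Lemma \ref{lem:01}.2 the condition is that every compact $C \subseteq X$ can be written as $F \setminus D$ with $F$ closed and $D$ a downset. Given compact $C \subseteq \bigsqcup X_\mu$, decompose $C = \bigsqcup_{j=1}^k C_j$ with each $C_j \subseteq X_{\mu_j}$ compact; if each $X_{\mu_j}$ is $T_{1/3}$, choose $C_j = F_j \setminus D_j$ and set $F := \bigsqcup_j F_j$ (closed in $\bigsqcup X_\mu$ since each $X_{\mu_j}$ is clopen) and $D := \bigsqcup_j D_j$ (a downset in the disjoint-union order), yielding $C = F \setminus D$. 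Conversely, a compact $C \subseteq X_\mu$ is also compact in $\bigsqcup X_\mu$; if $C = F \setminus D$ there, then $C = (F \cap X_\mu) \setminus (D \cap X_\mu)$, and $F \cap X_\mu$ is closed in $X_\mu$ while $D \cap X_\mu$ is a downset in $X_\mu$.

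The whole argument is essentially bookkeeping: every structure entering the characterizations (specialization order, minimal points, height, closed singletons, open singletons, closed subsets, downsets) restricts and glues transparently under disjoint union because each $X_\mu$ is clopen and the order is simply the disjoint union of the component orders. I expect the only genuine obstacle to be the $T_{1/3}$ direction, where one must combine the finite-subunion property of compact sets with the fact that a disjoint union of downsets is a downset in the disjoint-union order; once this is spelled out, the remaining cases reduce to a one-line verification each.
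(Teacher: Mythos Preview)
Your proposal is correct and follows essentially the same approach as the paper: both invoke Lemmas \ref{lem:00} and \ref{lem:01} together with the componentwise behavior of the specialization order, the height formula $\mathop{\mathrm{ht}}_\tau\bigl(\bigsqcup_\mu X_\mu\bigr)=\sup_\mu \mathop{\mathrm{ht}}_{\tau_\mu} X_\mu$, and the fact that finite disjoint unions of closed sets (resp.\ downsets) are closed (resp.\ downsets). Your write-up is in fact more careful than the paper's in two places: you make the $T_0$ hypothesis needed for Lemma \ref{lem:01} explicit, and for $i=1/3$ you spell out the converse direction (restricting $F$ and $D$ to a single component), which the paper leaves implicit.
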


\begin{proof}  
Since 
$\min  (\bigsqcup_{\mu} X_{\mu}) =  \bigsqcup_{\mu} \min X_{\mu}$ 
and a disjoint union of $T_0$ topological spaces are $T_0$, 
Lemma \ref{lem:00} implies 
the assertion for $i = -1$. 
Since 
$\mathop{\mathrm{ht}}_{\tau} (\bigsqcup_{\mu} X_{\mu}) 
= \sup_{\mu} \mathop{\mathrm{ht}}_{\tau} X_{\mu}$,  
%for the disjoint union $\bigsqcup_{\mu} X_{\mu}$, 
Lemma \ref{lem:01} implies 
the assertion for $i = 1/4$. 
Since 
the finite disjoint union 
of any downsets (resp. closed subsets)
is 
a downset (resp. closed subset), 
Lemma \ref{lem:01} implies 
the assertion for $i = 1/3$. 
Since each point in $X_{\mu_0}$ is 
open in $\bigsqcup_{\mu} X_{\mu}$ 
\iff so is it in $X_{\mu_0}$, 
%by symmetry, 
Lemma \ref{lem:01} 
implies 
the assertion for $i = 1/2$. 
\end{proof}

%
%\section{An application}
%

Recall that a topological space is anti-compact if each compact subset is finite. 
Obviously, we obtain the following observation. 

\begin{lemma}\label{lem:036} 
Let $X_{\mu}$ be topological spaces. 
Then the disjoint union $\bigsqcup_{\mu} X_{\mu}$ is anti-compact
\iff $X_{\mu}$ is anti-compact for any $\mu$.  
\end{lemma}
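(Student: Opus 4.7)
The plan is to handle the two directions separately, using the observation stated immediately before Proposition~\ref{lem:037} that any compact subset of the disjoint union sits inside a finite subunion and restricts to a compact set in each summand.

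For the forward implication, I would use the fact that each $X_{\mu}$ is canonically identified with a clopen subspace of $\bigsqcup_{\mu} X_{\mu}$. Consequently, any compact subset $K \subseteq X_{\mu_0}$ remains compact when viewed inside the disjoint union. Anti-compactness of the total space then forces $K$ to be finite, so $X_{\mu_0}$ is anti-compact.

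For the reverse implication, let $C$ be a compact subset of $\bigsqcup_{\mu} X_{\mu}$. By the observation recalled above, there is a finite index set $\mu_1,\dots,\mu_k$ such that $C \subseteq \bigsqcup_{j=1}^{k} X_{\mu_j}$ and each intersection $C_j := C \cap X_{\mu_j}$ is compact in $X_{\mu_j}$. By hypothesis each $X_{\mu_j}$ is anti-compact, so every $C_j$ is finite, and hence $C = \bigsqcup_{j=1}^{k} C_j$ is a finite union of finite sets, thus finite.

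There is no genuine obstacle here: the result is a direct application of the quoted observation combined with the clopen embedding of each summand, and the argument should fit in just a few lines. The only thing one has to be slightly careful about is citing the observation about compact subsets of disjoint unions precisely, rather than reproving it.
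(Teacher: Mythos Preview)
Your argument is correct and is exactly the natural one; the paper itself gives no proof at all for this lemma, merely prefacing it with ``Obviously, we obtain the following observation,'' so your write-up simply spells out the details the paper leaves implicit using the same observation about compact subsets of disjoint unions.
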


Notice that 
each cofinite topological space with infinitely many elements is not anti-compact but $T_1$, because every subset is compact. 
Moreover there are $T_{1/3}$ topological spaces which are not $T_{1/2}$ (cf. Example 3.2. \cite{ADP}). 
Therefore we have the  negative answer for Question 3.4 \cite{ADP} as follows:  

\begin{proposition}\label{prop:01}
There are $T_{1/3}$ topological spaces which are neither $T_{1/2}$ nor anti-compact. 
\end{proposition}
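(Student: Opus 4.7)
The plan is to build the required space as a disjoint union of two ingredients already invoked in the paragraph preceding the statement: a $T_{1/3}$ space $X_1$ that fails to be $T_{1/2}$ (from Example 3.2 of \cite{ADP}), and an infinite cofinite space $X_2$ (every cofinite topology on an infinite set is $T_1$, hence $T_{1/3}$, but every subset is compact, so it is as far as possible from anti-compact). Set $X := X_1 \sqcup X_2$ with the disjoint-union topology.

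First I would verify $T_{1/3}$: both summands are $T_{1/3}$, so Proposition \ref{lem:037} (applied to $i = 1/3$) gives $T_{1/3}$ for $X$. Next, failure of $T_{1/2}$: since $X_1$ is not $T_{1/2}$, the same proposition (the ``if'' direction, used contrapositively for $i = 1/2$) forces $X$ to fail $T_{1/2}$. Finally, failure of anti-compactness: $X_2$ contains an infinite compact subset (namely $X_2$ itself, by cofiniteness), and this subset remains compact and infinite in $X$; alternatively, Lemma \ref{lem:036} applied contrapositively to the non-anti-compact summand $X_2$ yields the conclusion.

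There is essentially no obstacle here: the whole point of this proposition is to advertise the combinatorial flexibility already packaged in Proposition \ref{lem:037} and Lemma \ref{lem:036}. The only mild care needed is to remember that the invariance proposition is an ``if and only if'' for $i = 1/2, 1/3$, so that having one component in each of the two required categories is simultaneously compatible: the $T_{1/3}$ property is preserved because \emph{both} components satisfy it, while the failures of $T_{1/2}$ and of anti-compactness are inherited because \emph{some} component witnesses each failure. Thus the single space $X_1 \sqcup X_2$ exhibits all three features and establishes the proposition, giving the promised negative answer to Question 3.4 of \cite{ADP}.
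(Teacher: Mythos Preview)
Your proposal is correct and matches the paper's proof essentially verbatim: the paper also takes the disjoint union of a $T_{1/3}$ non-$T_{1/2}$ space and a $T_{1/3}$ non-anti-compact space (the infinite cofinite space being the intended example of the latter), then invokes Proposition \ref{lem:037} and Lemma \ref{lem:036} to conclude. Your write-up is slightly more explicit about the contrapositive direction of the ``if and only if'' statements, but the argument is the same.
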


\begin{proof}
Let $X$ be a topological space that is not $T_{1/2}$ but $T_{1/3}$, 
$Y$ a topological space which is not anti-compact but $T_{1/3}$. 
% (e.g. the above cofinite topological space). 
By Proposition \ref{lem:037}  and Lemma  \ref{lem:036}, 
the disjoint union $X \sqcup Y$ is neither anti-compact nor $T_{1/2}$ but $T_{1/3}$. 
\end{proof}

Using the characterization of $T_{1/4}$, 
we have the following characterization of $T_{YS}$. 

\begin{lemma}\label{lem61} 
The following are equivalent for a topological space $X$:
\\
1) 
$X$ is $T_{YS}$. 
\\
2)
$X$ is $T_0$ with 
$\mathop{\Downarrow}  x \cap \mathop{\Downarrow}  y  = \emptyset$ for any distinct pair $x \neq y \in X$. 
\\
3) 
$X$ is $T_{1/4}$ and a downward forest. 

In any case, each connected component of $X$ is the downset $\mathop{\downarrow} x$ for some point $x \in X$. 
\end{lemma}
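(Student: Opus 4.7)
The plan is to prove the three-way equivalence by a cycle $(1) \Rightarrow (2) \Rightarrow (3) \Rightarrow (1)$ and then address the concluding assertion about connected components.

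For $(1) \Rightarrow (2)$, I first establish $T_0$ via Lemma~\ref{lem:01-a}'s criterion $|\hat x|=1$: if $\hat x=\hat y$ for some distinct $x,y$, then $\mathop{\downarrow}x=\mathop{\downarrow}y$ contains both $x$ and $y$, contradicting the trichotomy $\mathop{\downarrow}x\cap\mathop{\downarrow}y\in\{\emptyset,\{x\},\{y\}\}$. The shell-disjointness then follows because $\mathop{\Downarrow}x\cap\mathop{\Downarrow}y=(\mathop{\downarrow}x\cap\mathop{\downarrow}y)\setminus\{x,y\}$ is empty in each of the three permitted cases.

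For $(2) \Rightarrow (3)$, the $T_0$ hypothesis makes $\leq_{\tau}$ a partial order, so the paper's incomparable-pair criterion for downward forests is immediately implied by the stronger shell-disjointness in (2). To obtain $T_{1/4}$, Lemma~\ref{lem:01}(1) reduces matters to $\mathop{\mathrm{ht}}_{\tau}X\le 1$, and a hypothetical chain $x<y<z$ would place $x$ in $\mathop{\Downarrow}y\cap\mathop{\Downarrow}z$, contradicting (2). For $(3) \Rightarrow (1)$, given distinct $x,y$: if they are incomparable, the downward-forest property yields $\mathop{\Downarrow}x\cap\mathop{\Downarrow}y=\emptyset$, and since $x\notin\mathop{\downarrow}y$ and $y\notin\mathop{\downarrow}x$ the full intersection $\mathop{\downarrow}x\cap\mathop{\downarrow}y$ is empty; if they are comparable, say $x<y$, then $\mathop{\mathrm{ht}}_{\tau}X\le 1$ forces $x$ to be minimal (else a $w<x<y$ would give a chain of length two), so $\mathop{\downarrow}x=\{x\}$ and $\mathop{\downarrow}x\cap\mathop{\downarrow}y=\{x\}$.

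For the concluding claim on connected components, I would fix a maximal point $x$ and observe that $\mathop{\downarrow}x=\{x\}\sqcup\{m\in\min X:m<x\}$ is connected, because any open subset meeting some minimal $m<x$ must contain $x$ (open sets are upsets in the specialization order), so no nontrivial clopen separation of $\mathop{\downarrow}x$ is possible. The downward-forest property together with height $\le 1$ forces distinct maximals to have disjoint downsets and each minimal to sit below at most one maximal, so $X$ is partitioned by the family $\{\mathop{\downarrow}x:x\in\max X\}$ together with the singletons $\{m\}=\mathop{\downarrow}m$ for minimals $m$ below no maximal. The main obstacle is to upgrade each connected $\mathop{\downarrow}x$ to a maximal connected subset of $X$; I would attack this using that $\mathop{\downarrow}x=\overline{x}$ is both closed and saturated (an intersection of open neighborhoods by the saturation-upset correspondence established earlier in the paper), so that any connected subset containing $x$ and a point $y\notin\mathop{\downarrow}x$ could be split by a defining open of $\mathop{\downarrow}x$ that excludes $y$, contradicting connectedness.
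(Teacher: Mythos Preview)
Your cycle $(1)\Rightarrow(2)\Rightarrow(3)\Rightarrow(1)$ for the three-way equivalence is correct and matches the paper's arguments essentially step for step; the paper simply traverses the implications in a different order (it proves $(3)\Rightarrow(2)$, $(1)\Rightarrow(3)$, and $(2)\Rightarrow(1)$), but the individual verifications are the same height-bound and case analyses you give.

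The gap is in your treatment of the connected-component assertion, which the paper itself merely states without argument. Your proposed disconnection ``split by a defining open of $\mathop{\downarrow}x$ that excludes $y$'' does not work: knowing $\mathop{\downarrow}x$ is saturated gives you an open $U\supseteq\mathop{\downarrow}x$ with $y\notin U$, but $U$ may contain other points of the connected set $C$ outside $\mathop{\downarrow}x$, so $C\cap U$ and $C\setminus U$ need not reproduce the partition $C\cap\mathop{\downarrow}x$, $C\setminus\mathop{\downarrow}x$. Saturated (an intersection of open sets) is strictly weaker than open, and that is exactly what you would need. In fact the assertion as stated appears to be false in general: take $X=\{a_n,b_n:n\geq1\}$ and declare the closed sets to be $X$ together with all finite $F$ satisfying $a_n\in F\Rightarrow b_n\in F$. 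Then $\overline{a_n}=\{a_n,b_n\}$ and $\overline{b_n}=\{b_n\}$, so $X$ is $T_0$ of height one and a downward forest (each $\mathop{\uparrow}b_n=\{b_n,a_n\}$ is a chain); yet $X$ is connected (any proper clopen set would be both finite and cofinite), while no single $\mathop{\downarrow}x$ equals $X$. Thus no argument along your lines can close the gap without an extra hypothesis such as Alexandrov-discreteness, under which saturated sets are genuinely open and your outline would go through.
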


\begin{proof}  
Note that  $X$ is $T_0$ in any case. 
Suppose that $X$ is $T_{1/4}$ and a downward forest. 
Then 
each connected component of $X$ is the downset $\mathop{\downarrow} x$ for 
some point $x \in X$. 
Fix any distinct pair $x \neq y \in X$. 
If the height of $x$ is zero, then 
$\Downarrow x = \emptyset$ 
and so $\mathop{\Downarrow}  x \cap \mathop{\Downarrow}  y  = \emptyset$. 
Thus we may assume that the height of $x$ (resp. $y$) is one. 
Then $x$ and $y$ are incomparable. 
Since $X$ is a downward forest, 
we have 
$\mathop{\Downarrow}  x \cap \mathop{\Downarrow} y = \emptyset$. 
This means that the condition 2 holds. 
Suppose that $X$ is $T_{YS}$. 
If there are points $x > y > z$ in $X$, 
then $y, z \in \mathop{\downarrow}  x \cap \mathop{\downarrow}  y$. 
This contradicts to $T_{YS}$ axiom. 
Thus $\mathop{\mathrm{ht}}_{\tau} X < 2$ 
and so $X$ is $T_{1/4}$. 
This implies that the condition 3 holds. 
% and so $X$ is $T_D$. 
Moreover we will show that the condition 2 holds.  
Fix any $x \neq y \in X$. 
If $\mathop{\downarrow}  x \cap \mathop{\downarrow}  y  = \{ y \}$, 
then the fact $\mathop{\mathrm{ht}}_{\tau} X \leq 1$ 
implies that $\mathop{\Downarrow}  y = \emptyset$. 
By the symmetry, 
we may assume that 
$\mathop{\downarrow}  x \cap \mathop{\downarrow}  y  = \emptyset$. 
Then 
$\mathop{\Downarrow}  x \cap \mathop{\Downarrow}  y  = \emptyset$. 
This means that the condition 2 holds. 
Conversely, 
suppose that 
$\mathop{\Downarrow}  x \cap \mathop{\Downarrow}  y  = \emptyset$ 
for any $x \neq y \in X$. 
If there are points $x > y > z$ in $X$, 
then $ z \in \mathop{\Downarrow}  x \cap \mathop{\Downarrow}  y$. 
This contradicts to the hypothesis. 
Thus $\mathop{\mathrm{ht}}_{\tau} X \leq 1$. 
Fix any $x \neq y \in X$. 
If  $x > y$, 
then the hypothesis implies that 
$\mathop{\downarrow}  x \cap \mathop{\downarrow}  y = \{y \}$. 
By symmetry, 
we may assume that 
$x$ and $y$ are incomparable. 
Then  the hypothesis implies that 
$\mathop{\downarrow}  x \cap \mathop{\downarrow}  y = \emptyset$. 
\end{proof}

\section{Characterizations of separation axioms}

We observe the following statement. 

\begin{lemma}
A point $x$ of a topological space $X$ is $S_D$ 
if and only if either $x$ is minimal or $x \notin \overline{\mathop{\Downarrow}  x}$. 
\end{lemma}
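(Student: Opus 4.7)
The plan is to peel back the definition of $S_D$ through the $T_0$-identification and then read off when the relevant saturated downset is $\tau$-closed.

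First, I would translate the $S_D$ hypothesis: by definition $x$ is $S_D$ iff $\hat{x}$ is $T_D$ in the class space $\hat{X}$, i.e. the derived set $\mathop{\hat{\Downarrow}} \hat{x}$ is $\hat{\tau}$-closed. Using that the quotient map $X \to \hat{X}$ sends $\hat{\tau}$-closed subsets bijectively onto closed saturated subsets of $X$, and that the preimage of $\mathop{\hat{\Downarrow}}\hat{x}$ is $\mathop{\Downarrow}\hat{x} = \overline{x} - \hat{x}$ (as recorded in the Preliminaries), I reduce the problem to: $x$ is $S_D$ iff $\overline{x} - \hat{x}$ is $\tau$-closed.

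Next, I would dichotomize on whether $x$ is minimal. If $x$ is minimal, then $\overline{x} = \hat{x}$, so $\mathop{\Downarrow}\hat{x} = \emptyset$ is trivially closed and the ``either-or'' holds via the first clause. Otherwise $\mathop{\Downarrow}\hat{x}$ is nonempty and, being the complement of the upset $\hat{x}$ inside $\overline{x}$, is saturated. Its closure $\overline{\mathop{\Downarrow}\hat{x}}$ is therefore also saturated and lies in $\overline{x}$, so the difference $\overline{\mathop{\Downarrow}\hat{x}} \setminus \mathop{\Downarrow}\hat{x}$ is a subset of $\hat{x}$. By saturation this difference is either $\emptyset$ or all of $\hat{x}$, so closedness of $\mathop{\Downarrow}\hat{x}$ is equivalent to $x \notin \overline{\mathop{\Downarrow}\hat{x}}$.

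Combining the two cases yields the biconditional. The main subtlety, rather than any analytic obstacle, is notational: the $\mathop{\Downarrow} x$ appearing in the statement is the class-level derived set $\overline{x} - \hat{x}$, in line with the paper's remark that $S_{\sigma}$-axioms are obtained by replacing a point with its class; once that convention is in force, the argument is pure bookkeeping about the correspondence between $\tau$- and $\hat{\tau}$-closed sets together with the fact that closures of saturated sets are saturated.
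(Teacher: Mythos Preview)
Your argument is correct and follows the same dichotomy as the paper's: the minimal case is trivial, and otherwise one checks whether $x$ lies in the closure of the class-level derived set, using that $\overline{x}$ decomposes as $\hat{x}\sqcup(\overline{x}-\hat{x})$. One terminological wrinkle: in this paper ``saturated'' means \emph{upset}, and $\hat{x}$ is not an upset of $X$ in general, so your justification that $\mathop{\Downarrow}\hat{x}$ is ``saturated'' is phrased incorrectly---what actually drives your all-or-nothing step is that closed sets, being downsets, are unions of $\hat{\ }$-classes, so $\overline{\mathop{\Downarrow}\hat{x}}\setminus\mathop{\Downarrow}\hat{x}\subseteq\hat{x}$ is either empty or all of $\hat{x}$. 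With that correction your proof is complete, and your explicit flag that $\mathop{\Downarrow} x$ must be read at the class level is well taken: under the literal definition $\mathop{\Downarrow} x=\overline{x}\setminus\{x\}$ the forward implication fails whenever $|\hat{x}|>1$.
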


\begin{proof}
Suppose that $x$ is $S_D$. 
We may assume that $x$ is not minimal. 
Then $\mathop{\Downarrow}  x$ is closed and so 
$x \notin \mathop{\Downarrow}  x = \overline{\mathop{\Downarrow}  x}$. 
Conversely, suppose that either $x$ is minimal or $x \notin \overline{\mathop{\Downarrow}  x}$.
If $x$ is minimal, then $\hat{\mathop{\Downarrow}} \hat{x} = \emptyset$ is closed. 
Thus we may assume that $x \notin \overline{\mathop{\Downarrow}  x}$.
Since $\mathop{\downarrow}  x$ is closed and $\mathop{\downarrow}  x =  \{ x \} \sqcup  \mathop{\Downarrow}  x$,  the derive set $\mathop{\Downarrow}  x$ is closed. 
\end{proof}

Recall $\sigma U := \overline{U} - U$ for a subset $U$.  
Note that $\sigma U = \overline{U} \setminus \mathop{\uparrow} U$ for any $\lambda$-closed subset $U$. 
We have the following characterization of $\lambda$-spaces. 

\begin{proposition}\label{pro-1} 
A topological space $X$ is a $\lambda$-space 
\iff 
$\sigma U \subseteq \min X$ for any $\lambda$-closed subset $U$. 
%each boundary of $\lambda$-closed subset 
%consists of points whose classes are closed. 
%$\sigma U \subseteq \min X$
%for 
%any $\lambda$-closed subset $U$. 
\end{proposition}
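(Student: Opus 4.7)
The plan is to exploit the characterization from the preliminaries that a subset $A$ is $\lambda$-closed if and only if $A = \mathop{\uparrow} A \cap \overline{A}$, together with the fact recorded at the start of the section that $X$ is a $\lambda$-space if and only if the union of any two $\lambda$-closed sets is $\lambda$-closed. The proof splits into the two obvious directions.

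For ($\Rightarrow$), I would fix a $\lambda$-closed set $U$ and a point $x \in \sigma U = \overline{U} - U$, and argue by contradiction that $x$ is minimal. Suppose some $y < x$ existed. Since $\overline{y} = \mathop{\downarrow} y$ is closed, it is $\lambda$-closed, so by the $\lambda$-space hypothesis $V := U \cup \overline{y}$ is $\lambda$-closed and hence $V = \mathop{\uparrow} V \cap \overline{V}$. Then $x \in \overline{U} \subseteq \overline{V}$, while $x \geq y$ yields $x \in \mathop{\uparrow} \overline{y} \subseteq \mathop{\uparrow} V$; hence $x \in V = U \cup \overline{y}$. But $x \notin U$ by the choice of $x$, and $x \notin \overline{y} = \mathop{\downarrow} y$ because $y < x$, a contradiction.

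For ($\Leftarrow$), I would take two $\lambda$-closed sets $U_1, U_2$ and verify directly that $U_1 \cup U_2 = \mathop{\uparrow}(U_1 \cup U_2) \cap \overline{U_1 \cup U_2}$. Distributing the intersection gives
\[
(\mathop{\uparrow} U_1 \cup \mathop{\uparrow} U_2) \cap (\overline{U_1} \cup \overline{U_2}) \;=\; U_1 \cup U_2 \cup (\mathop{\uparrow} U_1 \cap \overline{U_2}) \cup (\mathop{\uparrow} U_2 \cap \overline{U_1}),
\]
so the task reduces to showing each cross-term lies in $U_1 \cup U_2$. For $x \in \mathop{\uparrow} U_1 \cap \overline{U_2}$ with $x \notin U_2$, the hypothesis yields $x \in \sigma U_2 \subseteq \min X$; picking $u \in U_1$ with $u \leq x$, the minimality $\mathop{\downarrow} x = \hat{x}$ forces $u \in \hat{x}$, i.e.\ $x \sim u$. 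In particular $x \leq u$, so $x \in \overline{u} \subseteq \overline{U_1}$, while $x \in \mathop{\uparrow} U_1$ holds by assumption; thus the $\lambda$-closedness of $U_1$ gives $x \in U_1$. The symmetric cross-term is handled identically.

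The main subtlety lies in the last step of ($\Leftarrow$): one must recognize that $x$ minimal with $u \leq x$ forces $u$ and $x$ to belong to the same specialization class, and then feed both membership in $\mathop{\uparrow} U_1$ and in $\overline{U_1}$ back into the intersection characterization of $\lambda$-closed sets. The ($\Rightarrow$) direction is essentially mechanical once one has the idea of unioning $U$ with the closure $\overline{y}$ of a witness to non-minimality.
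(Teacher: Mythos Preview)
Your proof is correct and follows essentially the same strategy as the paper: both rely on the characterization $A=\mathop{\uparrow} A\cap\overline{A}$ for $\lambda$-closed sets and reduce the $\lambda$-space property to the cross-terms $\mathop{\uparrow} U_i\cap\overline{U_j}$. The only differences are cosmetic: the paper argues both implications by contrapositive (and precedes them with the auxiliary observation that either hypothesis forces $\mathop{\mathrm{ht}}_\tau X\le 1$, which is not actually needed), uses the class $\hat{y}$ of a \emph{minimal} $y<x$ rather than your $\overline{y}$ for an arbitrary $y<x$, and in the reverse direction computes that any point of $(\mathop{\uparrow} A\cup\mathop{\uparrow} B)\cap(\overline{A}\cup\overline{B})\setminus(A\cup B)$ must lie outside $\min X$---which unwinds to exactly your minimality argument forcing $u\in\hat{x}$ and hence $x\in U_1$.
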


\begin{proof}  
First we show that each condition implies that $\mathop{\mathrm{ht}}_{\tau} X \leq 1$. 
Indeed, 
suppose that $X$ is a $\lambda$-space. 
Assume that there are points $x > y > z$. 
Then  
$X - \mathop{\downarrow} y$ and $\mathop{\downarrow} z$ are  $\lambda$-closed
but $(X - \mathop{\downarrow} y) \cup \mathop{\downarrow} z$ is not $\lambda$-closed. 
This contradicts that the union of any two $\lambda$-closed sets is $\lambda$-closed. 
Suppose that $\sigma U  \subseteq \min X$ for any $\lambda$-closed subset $U$.  
%Assume that 
%there are point 
%$x < y < z$. 
%Then $\hat{z} = \mathop{\downarrow} z \cap \mathop{\uparrow} z$ is 
%$\lambda$-closed. 
%%$E := \mathop{\downarrow} z$ is closed 
%%and take $U := \{ z \}$. 
The fact that the class of each singleton is $\lambda$-closed implies 
$\mathop{\mathrm{ht}}_{\tau} X \leq 1$. %

Suppose that 
there are a $\lambda$-closed subset $U$ and a point $x \in \sigma   U \setminus \min X$. 
Since $U$ is $\lambda$-closed, 
%there is a closed subset $E$ such that 
%$E \cap \mathop{\uparrow} U = U$. 
%%Since 
%%$E \cap \mathop{\uparrow} U = U$, 
%%we obtain that 
%Then 
we have 
$U = \mathop{\uparrow} U \cap \overline{U}$ 
and so 
$\sigma U 
= \overline{U} - U 
= \overline{U} - (\mathop{\uparrow} U \cap \overline{U})  
= \overline{U} \setminus \mathop{\uparrow} U$. 
Therefore 
$x \in \sigma   U = \overline{U} \setminus \mathop{\uparrow} U$ 
and so 
$x \notin \mathop{\uparrow} U$. 
Fix a minimal point $y < x$. 
Then 
%$y \in  \min X \setminus \mathop{\uparrow} U$ and so  
% and $y \in \min X$. 
%Therefore 
the class $\hat{y} \subseteq  \min X \setminus \mathop{\uparrow} U$ is $\lambda$-closed. 
We show that 
$U \sqcup \hat{y}$ is not $\lambda$-closed, where $\sqcup$ is a disjoint union symbol. 
Otherwise 
there is a closed subset $G$ of $X$ containing $U \sqcup \hat{y}$ 
such that 
$
U \sqcup \hat{y} = 
\mathop{\uparrow}(U \sqcup \hat{y}) \cap G = 
(\mathop{\uparrow}U \cap G) \cup (\mathop{\uparrow} y \cap G) $. 
%Since $y \notin \mathop{\uparrow} U$, 
%we have 
%$\mathop{\uparrow} U \cap G = U$.  
Since $x \notin U \sqcup \hat{y}$ 
and $y < x$, 
we obtain  
$x \notin G$, 
which contradicts to 
$x \in \overline{U}$. 
This shows that 
$X$ is not a $\lambda$-space. 
Conversely, 
suppose that 
$X$ is not a $\lambda$-space. 
Then 
there are $\lambda$-closed subsets $A, B$ 
such that 
$A \cup B$ is not $\lambda$-closed. 
Then 
$A \cup B \subsetneq 
\mathop{\uparrow} (A \cup B) \cap \overline{A \cup B}$ 
and so 
there is a point 
$x \in (\mathop{\uparrow} (A \cup B) \cap (\overline{A} \cup \overline{B})) 
- A \cup B$.  
Since 
$A = \overline{A} \cap \mathop{\uparrow} A$ 
and 
$B = \overline{B} \cap \mathop{\uparrow} B$, 
we have 
$(\mathop{\uparrow} (A \cup B) \cap (\overline{A} \cup \overline{B})) 
- A \cup B 
= 
((\mathop{\uparrow} A \cap \overline{B}) 
\cup 
(\mathop{\uparrow} B \cap \overline{A}))\setminus A \cup B 
= 
((\mathop{\uparrow} A - A) \cap \sigma  B) 
\cup 
((\mathop{\uparrow} B - B) \cap \sigma  A) \subseteq X - \min X$.  
%Then 
%either 
%$x \in \mathop{\uparrow} A \cap \sigma   B \setminus A$ 
%or 
%$x \in \mathop{\uparrow} B \cap \sigma   A \setminus B$. 
Thus 
%Then we may assume that 
$x \in \sigma  A \cup \sigma  B \setminus  \min X$. 
Hence 
$\sigma  A \nsubseteq \min X$  
or 
$\sigma  B \nsubseteq \min X$. 
%This shows that 
%$X$ is not a $\lambda$-space. 
\end{proof}

We have the following characterization of $S_{YS}$. 

\begin{proposition}\label{prop:a} 
The following are equivalent: 
\\
1. 
The topological space $X$ is $S_{YS}$. 
\\
2.  
$\mathop{\Downarrow} \hat{x} \cap \mathop{\Downarrow} \hat{y} = \emptyset$
for any $x, y \in X$ with $\hat{x} \neq \hat{y}$. 
\\
3. 
The class space $\hat{X}$ is a downward forest of height at most one. 
\end{proposition}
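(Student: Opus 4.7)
The proof essentially reduces to applying Lemma \ref{lem61} to the class space $\hat{X}$, together with the height characterization of $T_{1/4}$ from Lemma \ref{lem:01}, plus the translation identity $\bigsqcup \mathop{\hat{\Downarrow}} \hat{x} = \mathop{\Downarrow} \hat{x}$ recorded in the preliminaries. So the plan is to keep the proof short and invoke the earlier lemmas.

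First I would note the trivial fact that the class space $\hat{X}$ is always $T_0$, since by construction points of $\hat{X}$ with the same closure coincide (cf.\ Lemma \ref{lem:01-a}). Hence the hypothesis of Lemma \ref{lem61} is automatically satisfied by $\hat{X}$.

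For the equivalence $(1)\Leftrightarrow(3)$, I would simply unfold the definition: $X$ is $S_{YS}$ means $\hat{X}$ is $T_{YS}$. Applying the equivalence $(1)\Leftrightarrow(3)$ of Lemma \ref{lem61} to $\hat{X}$, this is equivalent to $\hat{X}$ being $T_{1/4}$ and a downward forest. Since $\hat{X}$ is $T_0$, Lemma \ref{lem:01}.1 translates $T_{1/4}$ into $\mathop{\mathrm{ht}}_{\tau} \hat{X} \leq 1$, which is exactly (3).

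For the equivalence $(1)\Leftrightarrow(2)$, I would apply the equivalence $(1)\Leftrightarrow(2)$ of Lemma \ref{lem61} to $\hat{X}$: since $\hat{X}$ is automatically $T_0$, $\hat{X}$ is $T_{YS}$ iff $\mathop{\hat{\Downarrow}} \hat{x} \cap \mathop{\hat{\Downarrow}} \hat{y} = \emptyset$ for every pair of distinct classes $\hat{x} \neq \hat{y}$ in $\hat{X}$. The only remaining step is to translate from operations in $\hat{X}$ to operations in $X$: by the identity $\bigsqcup \mathop{\hat{\Downarrow}} \hat{x} = \mathop{\Downarrow} \hat{x}$ from the preliminaries, and since a disjoint union of classes is empty iff the union (inside $X$) of the corresponding classes is empty, the condition $\mathop{\hat{\Downarrow}} \hat{x} \cap \mathop{\hat{\Downarrow}} \hat{y} = \emptyset$ in $\hat{X}$ is equivalent to $\mathop{\Downarrow} \hat{x} \cap \mathop{\Downarrow} \hat{y} = \emptyset$ in $X$, which is condition (2).

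There is no real obstacle here; the proposition is essentially Lemma \ref{lem61} lifted from $T_0$ topological spaces to arbitrary topological spaces via the class space construction. The only point that requires a line of care is checking that the two versions of $\mathop{\Downarrow}$ (in $X$ versus in $\hat{X}$) agree on empty-intersection statements, which is immediate from the fact that classes partition $X$.
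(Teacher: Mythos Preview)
Your proof is correct, but it is organized differently from the paper's. The paper does not invoke Lemma \ref{lem61} at all; instead it argues directly in $X$: it first checks that each of the three conditions forces $\mathop{\mathrm{ht}}_{\tau} X \leq 1$ (otherwise a chain $x>y>z$ gives $z<y\in\mathop{\downarrow} x\cap\mathop{\downarrow} y$), observes that this makes (2) and (3) equivalent, and then runs a short case analysis (comparable vs.\ incomparable classes) to pass between (1) and (2). In effect the paper repeats the proof of Lemma \ref{lem61} with classes $\hat{x}$ in place of points $x$.

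Your approach is more modular: you recognize that, by the paper's own convention $S_\sigma(X)\Leftrightarrow T_\sigma(\hat{X})$, the proposition is exactly Lemma \ref{lem61} applied to the $T_0$ space $\hat{X}$, and you use Lemma \ref{lem:01}.1 and the identity $\bigsqcup \mathop{\hat{\Downarrow}} \hat{x} = \mathop{\Downarrow} \hat{x}$ from the preliminaries to translate the conclusions back to $X$. This buys brevity and avoids duplicating the case analysis; the paper's direct argument, by contrast, is self-contained and does not rely on verifying that the $\hat{X}$-level and $X$-level notions of $\mathop{\Downarrow}$ agree on empty intersections.
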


\begin{proof}  
In each case, we have $\mathop{\mathrm{ht}}_{\tau} X \leq 1$. 
Otherwise there are points $x > y > z$ in $X$ 
and so $z < y \in \mathop{\downarrow}  x \cap \mathop{\downarrow}  y$, 
which contradicts to $S_{YS}$ axiom 
(resp. the hypothesis in 2).  
The fact $\mathop{\mathrm{ht}}_{\tau} X \leq 1$ implies that 2 and 3 are equivalent. 
Suppose that $X$ is $S_{YS}$. 
%Since $\mathop{\mathrm{ht}}_{\tau} X \leq 1$, the space $X$ is $S_{1/4}$. 
Fix any $x, y \in X$ with $\hat{x} \neq \hat{y}$. 
If $y < x$, then the fact $\mathop{\mathrm{ht}}_{\tau} X \leq 1$ implies that $\mathop{\downarrow}  y = \hat{y}$ and so $\mathop{\Downarrow} \hat{x} \cap \mathop{\Downarrow} \hat{y} = \mathop{\Downarrow} \hat{y} = \emptyset$. 
By the symmetry, 
we may assume that 
$x$ and $y$ are incomparable. 
By  the definition of $S_{YS}$, 
we have  
$\mathop{\Downarrow}  \hat{x}  \cap \mathop{\Downarrow} \hat{y} = 
\mathop{\downarrow}  x  \cap \mathop{\downarrow} y =  \emptyset$. 
Conversely, 
suppose that 
$\mathop{\Downarrow} \hat{x} \cap \mathop{\Downarrow} \hat{y} = \emptyset$ 
for any $x, y \in X$ with $\hat{x} \neq \hat{y}$. 
%If there are points $x > y > z$ in $X$, 
%then $ z \in \mathop{\Downarrow}  x \cap \mathop{\Downarrow}  y$. 
%This contradicts to the hypothesis. 
%Thus 
%$\mathop{\mathrm{ht}}_{\tau} X \leq 1$. 
Fix any $x, y \in X$ with $\hat{x} \neq \hat{y}$. 
If  $x > y$, 
then the hypothesis implies that 
$\mathop{\downarrow}  x \cap \mathop{\downarrow}  y = \hat{y}$. 
By symmetry, 
we may assume that 
$x$ and $y$ are incomparable. 
Then  the hypothesis implies that 
$\mathop{\downarrow}  x \cap \mathop{\downarrow}  y = 
\mathop{\Downarrow}  \hat{x}  \cap \mathop{\Downarrow} \hat{y} = \emptyset$. 
\end{proof}

\begin{proposition}\label{prop:b} 
The following statement holds: 
\\
1. 
$X$ is $S_{YY}$ \iff $\mathop{\mathrm{ht}}_{\tau} X \leq 1$ and $\hat{X}$ is a bouquet of downward forests.
\\
2. 
$X$ is $S_{Y}$ \iff $\mathop{\mathrm{ht}}_{\tau} X \leq 1$ and $\hat{X}$ is $\min$-$\S^1$-free.  
\end{proposition}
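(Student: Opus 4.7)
The plan is to treat both parts in parallel, using that $\mathop{\mathrm{ht}}_{\tau} X \leq 1$ is forced in each direction and reduces $\hat{X}$ to a bipartite (height-one) poset. In each direction this height bound is either a hypothesis or follows from the now-standard observation that a $3$-chain $x > y > z$ of pairwise inequivalent classes yields $\mathop{\downarrow} x \cap \mathop{\downarrow} y \supseteq \{\hat{y}, \hat{z}\}$, which contains two distinct classes and so violates both separation axioms. Under the height bound, for any pair of distinct classes $\hat{x}, \hat{y}$ the intersection $\mathop{\hat{\downarrow}} \hat{x} \cap \mathop{\hat{\downarrow}} \hat{y}$ is automatically $\emptyset$ or a single class whenever at least one of the two is minimal, so the substantive case is always two distinct maximal classes, whose downset intersection reduces to the set of common minimal predecessors.

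Part $2$ follows immediately from this reduction: $S_{Y}$ requires that set to have at most one element, and a pair of distinct minimals $\hat{a}, \hat{b}$ simultaneously below distinct maximals $\hat{c}, \hat{d}$ is by definition a $\min$-$\S^1$ subposet, so the two conditions coincide.

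For part $1$ forward, fix the witness $p$ from the $S_{YY}$ definition. The axiom applied to pairs of distinct maximals both different from $\hat{p}$ forces their common minimals below to lie in $\{\hat{p}\}$, and when one of the pair is $\hat{p}$ itself the intersection is forced empty. In the generic sub-case $\hat{p} \in \min \hat{X}$, I would then form the bouquet by taking $T_{\hat{x}} := \mathop{\hat{\downarrow}} \hat{x} \cup \{\hat{p}\}$ for each maximal $\hat{x}$ and $T_{\hat{m}} := \{\hat{m}, \hat{p}\}$ (antichain order) for each isolated minimal $\hat{m} \neq \hat{p}$; each piece is then a height-$\leq 1$ downward forest with $\hat{p}$ as a minimal base point, the non-$\hat{p}$ parts are pairwise disjoint by the preceding observation, and the wedge $\bigvee_i T_i$ therefore reproduces both the carrier and the order of $\hat{X}$. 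In the degenerate sub-case $\hat{p} \in \max \hat{X}$, the same axiom forces no two distinct maximals to share any minimal predecessor, so $\hat{X}$ is already a disjoint union of downward trees and is trivially a one-piece bouquet.

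Part $1$ reverse is a direct check inside the wedge: if distinct $\hat{x}, \hat{y}$ lie in the same branch $T_i$, the downward-forest property together with $\mathop{\mathrm{ht}}_{\tau} X \leq 1$ pins $\mathop{\hat{\downarrow}} \hat{x} \cap \mathop{\hat{\downarrow}} \hat{y}$ to $\emptyset$, $\hat{x}$, or $\hat{y}$, while if they lie in different branches only the base $\hat{p}$ can be shared, giving intersection $\emptyset$ or $\{\hat{p}\}$. The main obstacle is the forward direction of part $1$: the distinguished witness $\hat{p}$ need not be minimal in $\hat{X}$, which forces the case split above, and one must then verify carefully that each $T_i$ really is a downward forest with $\hat{p}$ as a minimal base and that the wedge order on the declared pieces agrees with the inherited $\hat{X}$-order.
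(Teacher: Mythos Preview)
Your proposal is correct, and for the height bound and Part~2 it matches the paper's argument essentially verbatim. For Part~1 the paper takes a shorter route: rather than building the bouquet pieces explicitly, it first disposes of the case where $X$ is already $S_{YS}$ (then $\hat{X}$ is itself a downward forest, trivially a one-piece bouquet), and otherwise observes that the complement $X - \hat{p}$ satisfies the $S_{YS}$ axiom, so by the preceding proposition $\hat{X} - \{\hat{p}\}$ is a downward forest of height at most one; re-inserting the minimal point $\hat{p}$ is then precisely the bouquet construction. The converse runs the same trick in reverse: a bouquet has a minimal base point $\hat{p}$ whose removal leaves a downward forest, hence $X - \hat{p}$ is $S_{YS}$, and minimality of $\hat{p}$ upgrades this to $S_{YY}$. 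This packaging via $S_{YS}$ also dissolves your ``main obstacle'': once $X$ is assumed not $S_{YS}$, some pair $\hat{x} \neq \hat{y}$ must have $\mathop{\downarrow} x \cap \mathop{\downarrow} y = \hat{p}$ with $\hat{p} \notin \{\hat{x}, \hat{y}\}$, so $\hat{p} < \hat{x}$ and the height bound forces $\hat{p} \in \min \hat{X}$ automatically---your case split is never needed. What your explicit construction does buy is a concrete description of the bouquet branches $T_{\hat{x}}$, which the paper's argument leaves implicit.
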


\begin{proof}
If there are points $x > y > z$ in $X$, 
then $\mathop{\downarrow}  x \cap \mathop{\downarrow} y$ 
contains two nonempty classes 
$\hat{y} \neq \hat{z}$. 
%$\mathop{\Downarrow} x \neq \mathop{\Downarrow} y$. 
Thus 
we have $\mathop{\mathrm{ht}}_{\tau} X \leq 1$ in each case. 
%\\

1. 
Suppose that 
$X$ is $S_{YY}$. 
We may assume that 
$X$ is not $S_{YS}$. 
Then there is a point $p$ as in the definition of $S_{YY}$. 
Note that  
$X - \hat{p}$ is $S_{YS}$. 
This implies that 
$\hat{X} - \hat{p}$ is a downward forest. 
% of height at most one. 
%For any $x \in X \min X$, 
%since $\hat{x} \subseteq \hat{X}$ is a downward forest, 
%we have 
Therefore $\hat{X}$ is a bouquet of downward forests.
Conversely, 
suppose that 
$\hat{X}$ is a bouquet of downward forests of height at most one.
Then there is a minimal point $p$ 
such that 
$\hat{X} - \hat{p}$ is a downward forest of height at most one 
and so the complement $X - \hat{p}$ is $S_{YS}$. 
By the minimality of $p$, this implies that 
$X$ is $S_{YY}$. 
%\\

2. 
Suppose that 
$X$ is $S_{Y}$. 
For each pair of points $x, y \in X - \min X$ 
with $\hat{x} \neq \hat{y}$, 
the intersection $\mathop{\downarrow}  x \cap \mathop{\downarrow}  y$ 
contains at most one class. 
This implies that 
$\hat{X}$ is $\min$-$\S^1$-free.  
Conversely, 
suppose that 
$\hat{X}$ is $\min$-$\S^1$-free such that 
 $\mathop{\mathrm{ht}}_{\tau} X \leq 1$.   
%This means that 
Then 
the intersection $\mathop{\downarrow}  x \cap \mathop{\downarrow}  y$ 
for each pair of points $x, y \in X$ 
% - \min X$ 
with $\hat{x} \neq \hat{y}$  
contains at most one class. 
Thus $X$ is $S_{Y}$. 
\end{proof}

We have the following characterization of $C_{0}$, $C_{D}$, $C_{R}$, and $C_{N}$ by using pre-order. 

\begin{proposition}\label{lem:01b} 
Let $x$ be a point of a topological space $X$. 
The following statement holds: 
\\
1. 
$x$ is $C_{0}$ \iff $x$ is minimal or $| \hat{x} | > 1$. 
\\
2. 
$x$ is $C_{D}$ \iff $x$ is minimal or $x \in \overline{\mathop{\Downarrow} x}$. 
\\
3. 
$x$ is $C_{R}$ \iff $\hat{x}$ is closed.
\\
4. 
$x$ is $C_{N}$ \iff $\overline{x}$ is down-directed. 
\end{proposition}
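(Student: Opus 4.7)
The plan is to translate each topological condition on the derived set $\mathop{\Downarrow} x = \overline{x} - \{x\}$ into an order-theoretic one via the specialization pre-order dictionary $\overline{y} = \mathop{\downarrow} y$, $y \leq x \Leftrightarrow y \in \overline{x}$, and $y \in \hat{x} \Leftrightarrow \overline{y} = \overline{x}$. The single observation driving all four parts is that, for $y \in \overline{x}$, the closure $\overline{y}$ fails to sit inside $\mathop{\Downarrow} x$ exactly when $x \in \overline{y}$, i.e., exactly when $y \in \hat{x}$; equivalently, a nonempty closed subset of $X$ is contained in $\mathop{\Downarrow} x$ iff all of its points are strictly below $x$.

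For part~3 this gives $C_R$ immediately: no nonempty closed subset sits in $\mathop{\Downarrow} x$ iff every $y \in \mathop{\Downarrow} x$ satisfies $x \in \overline{y}$, iff $\mathop{\Downarrow} x \subseteq \hat{x}$, iff $\overline{x} = \hat{x}$, which is precisely the closedness of $\hat{x}$. For part~1 I would note that $\mathop{\Downarrow} x$ is a union of nonempty closed subsets iff it is nonempty and $\overline{y} \subseteq \mathop{\Downarrow} x$ for every $y \in \mathop{\Downarrow} x$; negating, $C_0$ says either $\mathop{\Downarrow} x = \emptyset$ (equivalent to $x$ being closed, hence minimal with $|\hat{x}| = 1$) or some $y \in \mathop{\Downarrow} x$ lies in $\hat{x}$ (equivalent to $|\hat{x}| > 1$), and a short truth-table check turns this disjunction into ``$x$ minimal or $|\hat{x}| > 1$''.

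For part~2 I would use $\overline{\mathop{\Downarrow} x} \subseteq \overline{x} = \mathop{\Downarrow} x \cup \{x\}$, so when $\mathop{\Downarrow} x$ is nonempty it is non-closed iff $x \in \overline{\mathop{\Downarrow} x}$. If $x$ is minimal with $|\hat{x}| > 1$, any $y \in \hat{x} - \{x\} \subseteq \mathop{\Downarrow} x$ has $\overline{y} = \overline{x} \ni x$, giving $x \in \overline{\mathop{\Downarrow} x}$ and hence $C_D$; the remaining minimal case ($|\hat{x}| = 1$, so $\mathop{\Downarrow} x = \emptyset$ and $C_D$ is vacuous) and the non-minimal case are read off directly.

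For part~4 I would argue the contrapositive in both directions. If nonempty disjoint closed $F_1, F_2 \subseteq \mathop{\Downarrow} x$ exist, picking $y_i \in F_i$ yields $\overline{y_1} \cap \overline{y_2} \subseteq F_1 \cap F_2 = \emptyset$ with $y_i \in \overline{x}$, so $\overline{x}$ is not down-directed. Conversely, if $y_1, y_2 \in \overline{x}$ satisfy $\overline{y_1} \cap \overline{y_2} = \emptyset$, then neither $y_i$ can equal $x$ nor lie in $\hat{x}$ (else $\overline{y_i} = \overline{x}$ would contain the nonempty $\overline{y_{3-i}}$), so by the observation above $\overline{y_1}, \overline{y_2} \subseteq \mathop{\Downarrow} x$ are the required disjoint pair. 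The main obstacle across all four parts is the consistent bookkeeping of boundary cases---minimal points, singleton classes, and the empty derived set---for which the ``$x$ minimal or $\dots$'' formulation of each claim is the precise form that absorbs them.
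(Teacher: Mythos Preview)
Your proof is correct and follows essentially the same approach as the paper's: both translate the closed-subset conditions on $\mathop{\Downarrow} x$ into order-theoretic statements via the specialization pre-order (the paper phrases the key point as ``unions of closed subsets are exactly downsets'', which is equivalent to your observation that $\overline{y}\subseteq\mathop{\Downarrow} x$ fails precisely when $y\in\hat{x}$). Your treatment is somewhat more careful about the boundary cases (empty derived set, $|\hat{x}|=1$, and in part~4 the check that $\overline{y_i}\subseteq\mathop{\Downarrow} x$) than the paper's terser argument, but the substance is identical.
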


\begin{proof}  
1. 
Notice that unions of closed subsets are exactly downsets.  
%Let $x$ be a point of a topological space $X$. 
Suppose that $x$ is $C_{0}$. 
%For any non-minimal point $x$ of $X$,  
Then the derived set $\mathop{\Downarrow} x$ is not a downset. 
Since $\mathop{\downarrow} x$ is a downset, 
there is a point $y \in \hat{x} - \{ x \}$ and so $| \hat{x} | > 1$. 
Conversely, 
suppose that $x$ is either minimal or not $T_0$. 
%$| \hat{x} | > 1$. 
% for any non-minimal point $x$ of $X$. 
%Fix any point $x$ of $X$. 
If $x$ is minimal, 
then $\hat{x} - \{ x \}$ is either empty or not a union of closed subsets. 
Thus we may assume that $x$ is not minimal. 
Since $| \hat{x} | > 1$, 
we obtain that $\mathop{\Downarrow} x$ is not a downset and so is not a union of closed subsets. 
%Suppose that $X$ is $C_{0}$. 
%Fix any non-minimal point $x$ of $X$. 
%By Proposition 3.3 \cite{W}, 
%there is a point $y \in \mathop{\mathrm{ker}} x \cap \mathop{\Downarrow} x 
%= \hat{x} - \{ x \}$  
%and so $| \hat{x} | > 1$. 
%Conversely, 
%suppose that $| \hat{x} | > 1$ for any non-minimal point $x$ of $X$. 
%Fix any point $x$ of $X$. 
%If $x$ is of height at least $1$, 
%then  $| \hat{x} | > 1$ 
%and so there is a point  $y \in \mathop{\mathrm{ker}} x \cap \mathop{\Downarrow} x$.  
%By Proposition 3.3 \cite{W}, 
%we obtain that $X$ is $C_{0}$. 

2. 
Suppose that $x$ is $C_{D}$. 
We may assume that $x$ is not minimal. 
Then the derived set $\mathop{\Downarrow} x$ is either empty or non-closed. 
Since $x$ is not minimal, 
we have that $\mathop{\Downarrow} x = \mathop{\downarrow} x - \{ x\}$ is not closed and so $\mathop{\downarrow} x \supseteq \overline{\mathop{\Downarrow} x}$.
Since $ X - \mathop{\downarrow} x$ is open,  
we obtain 
$\mathop{\downarrow} x = \overline{\mathop{\Downarrow} x}$.  
Conversely, 
suppose that $x$ is minimal or $x \in \overline{\mathop{\Downarrow} x}$.  
%for any non-minimal point $x$ of $X$. 
%Fix any point $x$ of $X$. 
If $x$ is minimal, then $\hat{x}$ is closed and so $\mathop{\Downarrow} x =  \hat{x} - \{ x \}$ is either empty or non-closed. 
%Thus we may assume that 
If $x$ is not minimal, then the hypothesis implies that the derived set $\mathop{\Downarrow} x =  \mathop{\downarrow} x - \{ x \}$ is not closed. 
%This implies that 
%$X$ is $C_D$. 

3. 
Suppose that $x$ is $C_{R}$. 
Assume that $\mathop{\mathrm{ht}}_{\tau} x \geq 1$. 
Then there is a point $y < x $ of $X$ 
and so $\mathop{\downarrow}  y \subset \mathop{\Downarrow} x$. 
This contradicts to the definition of $C_R$. 
Conversely, suppose that $\mathop{\mathrm{ht}}_{\tau} x = 0$.
%Fix any point $x$ of $X$. 
Then $\mathop{\downarrow} x = \hat{x}$ and so $\mathop{\Downarrow} x =  \hat{x} - \{ x \}$. 
Since any point $y \in \hat{x}$ satisfies 
$\mathop{\downarrow} y  = \mathop{\downarrow} x$, 
the derived set $\mathop{\Downarrow} x$ contains no nonempty closed subsets.

4. 
Suppose that $x$ is not $C_{N}$. 
Then there are nonempty disjoint closed subsets $F, E \subseteq \mathop{\Downarrow} x$.
For any $y \in F$, $z \in E$, we have 
$\mathop{\downarrow} y \cap \mathop{\downarrow} z  = \emptyset$. 
This shows that $\mathop{\downarrow} x$ is not down-directed. 
Conversely, suppose that 
%there is a point $x$ of $X$ whose closure 
$\mathop{\downarrow} x$ is not down-directed. 
Then there are two points $y, z \in \mathop{\downarrow} x$ 
such that $\mathop{\downarrow} y \cap \mathop{\downarrow} z  = \emptyset$. 
This means that $x$ is not $C_N$. 
\end{proof}

We obtain the following inclusion relations:  $S_1 \Rightarrow C_0 \Rightarrow $ recurrent. 

\begin{lemma}\label{lem410}
Let $x$ be a point of a topological space $X$. 
\\
1) 
If $x$ is $S_1$, then $x$ is $C_0$. 
\\
2) 
If $x$ is $C_0$, then $x$ is recurrent. 
\end{lemma}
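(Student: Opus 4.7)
The plan is to deduce both parts from Proposition \ref{lem:01b}(1), which characterizes $C_0$ points as those which are either minimal in $(X,\leq_\tau)$ or have non-singleton class.

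For (1), I would first unfold the definition of $S_1$: it says that $\hat{x}$ is a closed point in $\hat{X}$, i.e.\ $\hat{x}$ is minimal in the specialization pre-order on $\hat{X}$. Using the disjoint-union description $\overline{x} = \bigsqcup \overline{\hat{x}}^{\hat{\tau}}$ from the preliminaries, this is equivalent to $\mathop{\downarrow} x = \hat{x}$, so $x$ itself is a minimal point of $X$ with respect to $\leq_\tau$. Proposition \ref{lem:01b}(1) then immediately yields that $x$ is $C_0$.

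For (2), I would apply Proposition \ref{lem:01b}(1) and split on $|\hat{x}|$. If $|\hat{x}| = 1$, then Lemma \ref{lem:01-a} gives that $x$ is $T_0$, which is recurrent by the definition in Section 2.8. Otherwise $|\hat{x}| > 1$, so I can pick $y \in \hat{x} \setminus \{x\}$; the relation $\overline{y} = \overline{x}$ simultaneously yields $y \in \mathop{\Downarrow} x$ and $x \in \overline{y} \subseteq \overline{\mathop{\Downarrow} x}$, so $x \in \overline{\mathop{\Downarrow} x} \setminus \mathop{\Downarrow} x$. Hence $\mathop{\Downarrow} x$ is not closed, $x$ is non-$T_D$, and therefore $x$ is recurrent.

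Both implications are essentially bookkeeping through the definitions and the already-established Proposition \ref{lem:01b}, so there is no serious obstacle. The only step that warrants care is the translation in (1) from ``$\hat{x}$ closed in $\hat{X}$'' to ``$x$ minimal in $X$''; it rests on the correspondence between $\tau$-closed sets and $\leq_\tau$-downsets together with the decomposition of $\overline{x}$ into $\hat{\tau}$-classes noted in the preliminaries.
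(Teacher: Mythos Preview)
Part (1) is correct and matches the paper's argument.

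Part (2) has a gap. You read the definition of recurrent in Section~2.8 literally as ``$T_0$ or non-$T_D$'', but the parenthetical that immediately follows says ``either it is closed or the derived set is not closed'', and it is this latter reading that is used consistently throughout the paper (cf.\ the proofs of Lemma~\ref{lem:23} and Lemma~\ref{lem071}, and the example given right after the present lemma). The word ``$T_0$'' there is evidently a slip: under the literal reading your own two cases would show that \emph{every} point of \emph{every} space is recurrent --- either $|\hat{x}|=1$, hence $T_0$, or $|\hat{x}|>1$, hence non-$T_D$ by your second-case computation --- and then both the lemma and the very notion of $\tau$-recurrence would be vacuous. Notice in particular that your argument for the case $|\hat{x}|=1$ never invokes the $C_0$ hypothesis.

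Under the intended definition your first case does not go through: $|\hat{x}|=1$ only gives $T_0$, not closed. The repair is to actually use $C_0$ there: by Proposition~\ref{lem:01b}(1), the hypothesis $C_0$ together with $|\hat{x}|=1$ forces $x$ to be minimal, whence $\overline{x}=\hat{x}=\{x\}$ and $x$ is closed, hence recurrent. Your second case is fine as written. The paper itself handles (2) in one line straight from the raw definition of $C_0$: if $\mathop{\Downarrow} x$ is not a union of nonempty closed sets, then either it is empty (so $x$ is closed) or it is nonempty and therefore not itself closed (so $x$ is non-$T_D$).
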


\begin{proof} 
The definitions of $C_0$ and recurrence 
imply the assertion 2). 
If $x$ is $S_1$,  
then $\hat{x}$ is closed and so 
$x$ is a minimal point. 
Lemma \ref{lem:01b} implies $x$ is  $C_0$. 
\end{proof}

There is a $T_0$-space which is recurrent but not $C_0$ (see Figure \ref{tree02}). 
Indeed, 
let $X$ be the set of natural numbers (i.e. $X := \Z_{\geq0}$). 
Define the topology $\tau$ as follows: 
a subset is closed if it is finite subset of $\Z_{>0}$ or the whole space. 
Note that the topology $\tau$ is the cofinite lower topology such that 
the set of elements of height zero (resp. one) is $X - \{ 0 \}$ (resp. \{ 0 \}).  
Then 
%$\overline{0} - \{ 0 \} =  \Z_{>0}$ is not closed 
%but 
the height $1$ element $0$ is $T_0$. 
%Conversely,  
%a recurrent foliation (resp. group-action) on a paracompact manifold is $C_0$. 
%%
\begin{figure}
\begin{center}
\includegraphics[scale=0.4]{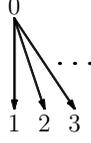}
\caption{A downward tree $\Z_{>0}$ which is recurrent but not $C_0$.}
\label{tree02}
\end{center}
\end{figure}

\section{Observations of separation axioms}

Now, we state the following observations. 

\begin{lemma}\label{lem:25}
The following statement holds: 
\\
1. 
$X$ is nested \iff ${X}$ is a pre-chain. 
\\
2.  
$X$ is $S_{SD}$ \iff $\hat{X}$ is an upward forest of height at most $1$. 
\\
3. 
$X$ is w-$R_0$ \iff $X$ has no bottoms with respect to $\leq_{\tau}$. 
\\
4. 
$X$ is w-$C_0$ \iff $X$ has no tops with respect to $\leq_{\tau}$. 
\\
5.  
$X$ is $S_{\delta}$ \iff $\hat{X}$ is a down-discrete upward forest.
\\
6. 
$\hat{X}$ is a downward forest \iff $\overline{x} \cap \overline{y} = \emptyset$ for any open subsets $U, V$ and any points $x \in U \setminus V$, $y \in V \setminus U$.  
\end{lemma}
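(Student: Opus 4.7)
The plan is to prove all six equivalences by translating through the specialization pre-order, using the following facts established in Section~2: open subsets are exactly upsets, $\overline{x} = \mathop{\downarrow} x$, $\mathop{\mathrm{ker}}_{\tau} x = \mathop{\uparrow} x$, the identity $\overline{x} = \bigsqcup \mathop{\hat{\downarrow}}  \hat{x}$, and the observation that $\hat{y}$ is closed if and only if $y \in \min X$ (because $\overline{\hat{y}} = \overline{y} = \mathop{\downarrow} y$ forces $y$ to be minimal once it equals $\hat{y}$).

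Parts (3) and (4) are one-liners: a point $z$ is a bottom of $\leq_{\tau}$ iff $z \in \bigcap_{x \in X} \overline{x}$, and dually $z$ is a top iff $z \in \bigcap_{x \in X} \mathop{\mathrm{ker}}_{\tau} x$, so nonexistence of such a point is exactly w-$R_0$ (resp. w-$C_0$). For (1), if $X$ is nested then for any $x, y$ the opens $X - \overline{x}$ and $X - \overline{y}$ are comparable, yielding $\overline{x} \subseteq \overline{y}$ or vice versa; conversely, if $\hat{X}$ is a pre-chain and $x \in U - V$, $y \in V - U$ for opens $U, V$, then assuming WLOG $x \leq_{\tau} y$ forces $y \in \mathop{\uparrow} x \subseteq U$ by the upset property, a contradiction. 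For (6), I first note that the condition ``$x \in U - V$ and $y \in V - U$ for some opens $U, V$'' is, via the upset characterization, equivalent to $\hat{x}$ and $\hat{y}$ being incomparable, with explicit witnesses $X - \overline{y}$ and $X - \overline{x}$ in the reverse direction. Since $\overline{x} \cap \overline{y} = \bigsqcup(\mathop{\hat{\downarrow}} \hat{x} \cap \mathop{\hat{\downarrow}} \hat{y})$, the stated condition reduces to $\mathop{\hat{\downarrow}} \hat{x} \cap \mathop{\hat{\downarrow}} \hat{y} = \emptyset$ for all incomparable classes, which is the downward forest characterization recorded in Section~2.

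For (2), the $S_{SD}$ axiom on $x$ translates to: $\hat{x}$ is minimal, or $\mathop{\hat{\Downarrow}} \hat{x}$ is a single minimal class. Applied pointwise this forbids chains $\hat{x}_1 < \hat{x}_2 < \hat{x}_3$ (forcing $\mathop{\mathrm{ht}}_{\tau} X \leq 1$) and forces each non-minimal class to cover a unique minimal class, so $\mathop{\hat{\downarrow}} \hat{x}$ has at most two elements and is trivially a chain, exhibiting $\hat{X}$ as an upward forest of height at most one. The converse direction is immediate.

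For (5), the key translation is $\mathop{\Downarrow} \hat{x} = \overline{y}$ if and only if $\mathop{\hat{\Downarrow}} \hat{x} = \mathop{\hat{\downarrow}} \hat{y}$ in $\hat{X}$. Assuming this for every non-minimal $x$, the class $\hat{y}$ becomes the unique maximum of $\mathop{\hat{\Downarrow}} \hat{x}$ through which every lower element passes, and iterating this observation yields the chain structure of downsets (so $\hat{X}$ is an upward forest); simultaneously $\mathop{\hat{\downarrow}} \hat{x} \cap \mathop{\hat{\uparrow}} \hat{y} = \{\hat{x}, \hat{y}\}$, which is exactly down-discreteness. Conversely, in a down-discrete upward forest, the immediate predecessor $\hat{y}$ supplied by down-discreteness dominates every element strictly below $\hat{x}$ by the chain property, giving $\mathop{\hat{\Downarrow}} \hat{x} = \mathop{\hat{\downarrow}} \hat{y}$ and hence $\mathop{\Downarrow} \hat{x} = \overline{y}$. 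The main obstacle is part (5): one must simultaneously extract the chain structure of downsets and the vacuity of the open interval $(\hat{y}, \hat{x})$ from a single topological identity, and reassemble both from the order data on the converse direction; the cleanest route is to treat $\hat{y}$ as the unique element covered by $\hat{x}$ and read down-discreteness as the emptiness of $(\hat{y}, \hat{x})$.
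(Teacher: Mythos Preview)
Your arguments for parts 1--4 and 6 are correct and follow essentially the same route as the paper; your treatment of (3) and (4) is in fact more self-contained, since the paper first invokes results from \cite{M} before passing to the order picture. (One harmless slip: open subsets are upsets, not ``exactly upsets''; you only ever use the correct direction.)

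The genuine gap is in the forward direction of (5). Your phrase ``iterating this observation yields the chain structure of downsets'' presupposes that the iteration terminates, i.e.\ some descending chain condition that is nowhere assumed. In fact the implication $S_\delta \Rightarrow \text{upward forest}$ fails as stated. Take the Alexandrov space on the $T_0$ poset $\{x_0 > x_1 > x_2 > \cdots\} \cup \{a,b\}$ with $a,b$ incomparable minimal elements satisfying $a,b < x_n$ for every $n$. Then $\mathop{\Downarrow}\hat{x}_n = \mathop{\downarrow} x_{n+1} = \overline{x_{n+1}}$ and $a,b$ are closed, so the space is $S_\delta$; yet $\mathop{\downarrow} x_0$ contains the incomparable pair $a,b$, so $\hat{X}$ is not an upward forest. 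Your iteration here produces the chain $x_0 > x_1 > \cdots$ and never reaches $a$ or $b$. The paper's own proof of (5) is a one-sentence assertion of the same equivalence and is equally affected, so the defect lies in the statement rather than in your strategy; an additional hypothesis such as Artinianness (or any well-foundedness of $\leq_\tau$) would make your iteration argument valid.
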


\begin{proof}  
1. 
Obviously, if $\hat{X}$ is a chain, then $X$ is nested. 
Conversely, suppose $X$ is nested. 
For any points $x, y$ of $X$, 
since $X - \mathop{\uparrow} x$ and $X - \mathop{\uparrow} y$ are open, 
we have $\mathop{\downarrow} x \subseteq \mathop{\downarrow} y$ 
or $\mathop{\downarrow} y \subseteq \mathop{\downarrow} x$. 
Hence $x \leq y$ or $y \leq x$ for any points $x, y$ of $X$. 
This means that all points are comparable and so 
$\hat{X}$ is a chain. 
 
2. 
Note that an point whose class is closed is of height zero. 
Suppose that $X$ is $S_{SD}$. 
For any point $x \in X$ whose class is not closed, the derived set $\mathop{\Downarrow} \hat{x}$ of the class $\hat{x}$ is a class of some point 
and so the downset $\mathop{\downarrow} x$ is a pre-chain of height one. 
Therefore the class space $\hat{X}$ is an upward forest of height at most $1$. 
Conversely, 
suppose that the class space $\hat{X}$ is an upward forest of height at most $1$. 
For any point $x \in X$ whose class is not closed, 
the derived set $\mathop{\Downarrow} \hat{x}$ of the class $\hat{x}$ is a class of a minimal point 
and so is closed. 
 
3. 
By Proposition 1 \cite{M}, we have that 
$X$ is w-$R_0$ 
\iff 
$\mathop{\mathrm{ker}}_{\tau} x \neq X$ for any point $x$ of $X$. 
If there is a bottom $t$ of $X$, 
then $\mathop{\mathrm{ker}}_{\tau} t = \mathop{\uparrow} t = X$ 
and so $X$ is not w-$R_0$. 
If there are no bottoms of $X$, 
then  $\mathop{\mathrm{ker}}_{\tau} x = \mathop{\uparrow} x \neq X$ 
for any point $x \in X$, 
and so $X$ is w-$R_0$ .  

4. 
By Proposition 4 \cite{M}, we have that 
$X$ is w-$C_0$ 
\iff 
$\overline{x} \neq X$ for any point $x$ of $X$. 
The dual of the proof of 3 implies the assertion. 

5. 
Note that 
$X$ is $S_{\delta}$  
\iff 
$\mathop{\Downarrow} \hat{x}$ of a point $x$ is a down-discrete pre-chain. 
Since the latter condition is the definition of down-discrete upward forest, 
the assertion holds. 

6. 
Suppose that 
$\hat{X}$ is a downward forest. 
For any point $x \in X$, 
we have that 
$\mathop{\uparrow} x = \mathop{\mathrm{ker}}_{\tau} x$ is a pre-chain. 
Fix any open subsets $U, V$ and 
any $x \in U \setminus V$,  
$y \in V \setminus U$.  
Assume that  
there is a point $z \in \overline{x} \cap \overline{y}$, 
Then 
$\mathop{\uparrow} z$ is a pre-chain  
and so 
either 
$x \leq y$ or $y \leq x$. 
This means 
either 
$y \in U$ or $x \in V$, 
which contradicts to the hypothesis. 
Conversely, 
suppose that 
$\overline{x} \cap \overline{y} = \emptyset$ 
for any open subsets $U, V$ and 
any $x \in U \setminus V$,  
$y \in V \setminus U$.  
Assume that 
there is a point $z$ of $X$ such that 
$\mathop{\uparrow} z$ is not a pre-chain. 
Let $x, y \in \mathop{\uparrow} z$ be  the 
incomparable points. 
Then 
$U_x := X - \mathop{\downarrow} y$ 
and 
$U_y := X - \mathop{\downarrow} x$. 
Since 
$x \in U_y \setminus U_x$ 
and  
$y \in U_x \setminus U_y$, 
the hypothesis implies that 
$\mathop{\downarrow} x \cap \mathop{\downarrow} y = \emptyset$, 
which contradicts to $x, y \in \mathop{\uparrow} z$. 
\end{proof} 

Complementary, we call the topologies which satisfy the last condition in Lemma \ref{lem:25}
$S_{Q}$ topologies, 
because the last condition is similar to the dual of the $5$th condition 
(and Q looks like a refection image of $\delta$).  
Then we have the following inclusion relations for $S_Q$: 
\\
1) 
$S_{YS} = S _{1/4} \cap S_{Q}$. 
\\
2) 
%$S_{SD} = S _{1/4} \cap S_{Q} \cap S_{\delta}$. 
%\\
%3) 
%A topological space $X$ is 
%$S_{Q}$ and $S_{\delta}$ 
%$\Leftrightarrow$ 
%$\hat{X}$ is a disjoint unions of chains. 
$X$ is $S_{Q}$ and $S_{\delta}$ 
$\Rightarrow$ 
$\hat{X}$ is a disjoint unions of chains. 
\\
3) 
 $C_R$ or nested    
$\Rightarrow$ 
$S_Q$. 
%\\
%5) 
%$C_N$
%$\Rightarrow$ 
%$S_{\delta}$. 
%

\section{Dynamical-systems-like properties}

We state equivalence of recurrence between topological spaces and class spaces.  

\begin{lemma}\label{prop:06}
Let $(X, \tau)$ a topological space with the class space $(\hat{X}, \hat{\tau})$ 
and $p: X \to \hat{X}$ be the natural projection. 
Then $X$ is $\tau$-recurrent if and only if  
$\hat{x}$ is $\hat{\tau}$-recurrent for any $T_0$-point $x \in X$. 
% with $\hat{x} = \{ x \}$. 
\end{lemma}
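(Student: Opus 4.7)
The plan is to reduce the equivalence to a point-by-point statement for $T_0$-points, after first disposing of the non-$T_0$ case as automatic. Two ingredients do all the work: the fact (noted in the preliminaries) that $\overline{x}^{\tau} = \bigsqcup \overline{\hat{x}}^{\hat{\tau}}$, and the defining property of the quotient topology, namely that $A \subseteq \hat{X}$ is $\hat{\tau}$-closed \iff $p^{-1}(A)$ is $\tau$-closed.

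First, I would observe that any non-$T_0$ point of $X$ is automatically $\tau$-recurrent. Indeed, if $x$ is not $T_0$, Lemma \ref{lem:01-a} gives $|\hat{x}|>1$, so we can pick $y \in \hat{x}-\{x\}$. Then $y \in \overline{x}-\{x\} = \mathop{\Downarrow} x$ while $\overline{y}=\overline{x} \ni x$, so $x \in \overline{\mathop{\Downarrow} x} \setminus \mathop{\Downarrow} x$, which shows $\mathop{\Downarrow} x$ is not closed; thus $x$ is non-$T_D$, hence $\tau$-recurrent. Consequently, $X$ is $\tau$-recurrent \iff every $T_0$-point of $X$ is $\tau$-recurrent.

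Second, for any $T_0$-point $x \in X$, I would prove the local equivalence ``$x$ is $\tau$-recurrent \iff $\hat{x}$ is $\hat{\tau}$-recurrent''. Since $\hat{x}=\{x\}$, the identity $\overline{x}^{\tau}=\bigsqcup \overline{\hat{x}}^{\hat{\tau}}$ yields $\mathop{\Downarrow}_{\tau} x = \overline{x}^{\tau} \setminus \{x\} = p^{-1}(\mathop{\Downarrow}_{\hat{\tau}} \hat{x})$; by the quotient characterization, $\mathop{\Downarrow}_{\tau} x$ is $\tau$-closed \iff $\mathop{\Downarrow}_{\hat{\tau}} \hat{x}$ is $\hat{\tau}$-closed. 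Similarly $p^{-1}(\{\hat{x}\})=\hat{x}=\{x\}$, so $\{x\}$ is $\tau$-closed \iff $\{\hat{x}\}$ is $\hat{\tau}$-closed. Recurrence at $x$ (resp.\ at $\hat{x}$) is the disjunction of these two closedness conditions, so the two disjunctions agree.

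Combining the two steps finishes the proof: the forward direction is immediate, and for the converse the hypothesis handles $T_0$-points via the local equivalence while the first observation handles the rest. The only thing worth being careful about is invoking the disjoint-union identity for closures from the preliminaries correctly, so that the equation $\mathop{\Downarrow}_{\tau} x = p^{-1}(\mathop{\Downarrow}_{\hat{\tau}} \hat{x})$ really uses $\hat{x}=\{x\}$; that is the only place the $T_0$-hypothesis on $x$ is actually needed.
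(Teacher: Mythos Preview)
Your proposal is correct and follows essentially the same approach as the paper: both arguments first dispose of non-$T_0$ points as automatically recurrent (because $\hat{x}-\{x\}\neq\emptyset$ forces the derived set to be non-closed), and then, for a $T_0$-point, use $\hat{x}=\{x\}$ together with $\overline{x}=p^{-1}(\overline{\hat{x}}^{\hat{\tau}})$ and the quotient characterization of closed sets to match the two defining conditions of recurrence. The only difference is organizational: you isolate the pointwise equivalence for $T_0$-points as a single clean lemma, whereas the paper weaves the same case analysis through each direction separately.
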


\begin{proof}
Suppose that $\hat{x}$ is recurrent for any point $x \in X$ with $|\hat{x}| = 1$. 
Fix any point $x \in X$. 
If $|\hat{x}| > 1$, then $\hat{x} - \{ x \} \neq \emptyset$ and so the derived set $\overline{x} - \{ x \}$ is not closed. 
Thus we may assume that $|\hat{x}| = 1$. 
Then either $\hat{x}$ is a $\hat{\tau}$-closed point 
or the derived set $\overline{\hat{x}}^{\hat{\tau}} - \hat{x}$ is not $\hat{\tau}$-closed. 
If $\hat{x}$ is $\hat{\tau}$-closed, then 
$x$ is minimal and so $\overline{x} = \hat{x} = \{ x \}$.  
Thus we may assume that the derived set $\overline{\hat{x}}^{\hat{\tau}} - \hat{x}$ is not $\hat{\tau}$-closed. 
Since $\hat{x} = \{ x \}$, the derived set $\overline{x} - \{ x \} = p^{-1}(\overline{\hat{x}}^{\hat{\tau}} - \hat{x})$ is not closed. 
Conversely, suppose that $\tau$ is recurrent. 
Fix any point $\hat{x} \in \hat{X}$ with $\hat{x} = \{ x \}$. 
Then either $x$ is closed or $\overline{x} - \{ x \}$ is not closed. 
If $x$ is closed, then $\hat{x} = \{ x \}$ is also closed. 
Thus we may assume that $\overline{x} - \{ x \}$ is not closed. 
Then  the inverse image $p^{-1}(\overline{\hat{x}}^{\hat{\tau}} - \hat{x}) = p^{-1}(\overline{\hat{x}}^{\hat{\tau}}) - \{ x \} =  \overline{x} - \{ x \}$ 
is not closed and  so is $\overline{\hat{x}}^{\hat{\tau}} - \hat{x}$. 
%By the definition of quotient topology, the derived set $\overline{\hat{x}}^{\hat{\tau}} - \hat{x}$ is not $\hat{\tau}$-closed. 
%Therefore $\hat{x}$ is recurrent. 
\end{proof}

We state the relation of recurrence on a topological space and the class space.

\begin{lemma}\label{lem071}
Let $X$ be a topological space $X$ with the class space $\hat{X}$ and $p: X \to \hat{X}$ the natural projection.  
Then 
$p^{-1}(\{ \hat{x} \in \hat{X} \mid |\hat{x}| > 1\} \cup \hat{\mathrm{R}}) = \mathrm{R}$, 
where $\mathrm{R}$ is the set of recurrent points of $X$ and 
$\hat{\mathrm{R}}$ is the set of recurrent point of $\hat{X}$. 
\end{lemma}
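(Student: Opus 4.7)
The plan is to prove the set equality by case analysis on whether $|\hat{x}| > 1$ or $|\hat{x}| = 1$, reducing each case to content essentially already contained in Lemma~\ref{prop:06}.

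First I would dispose of the case $|\hat{x}| > 1$ by a direct argument showing such an $x$ always lies in $\mathrm{R}$. Pick any $y \in \hat{x} \setminus \{x\}$; then $y \in \overline{x} \setminus \{x\} = \mathop{\Downarrow} x$, while $\overline{y} = \overline{x}$ contains $x \notin \mathop{\Downarrow} x$. Hence $\mathop{\Downarrow} x$ is not closed, so $x$ is $\tau$-recurrent. This immediately yields the inclusion $p^{-1}(\{\hat{x} \in \hat{X} \mid |\hat{x}| > 1\}) \subseteq \mathrm{R}$.

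Second, for $x$ with $|\hat{x}| = 1$ I would invoke the proof of Lemma~\ref{prop:06}, which actually establishes the pointwise equivalence ``$x \in \mathrm{R}$ iff $\hat{x} \in \hat{\mathrm{R}}$'' under the assumption $|\hat{x}|=1$. Concretely, using $\overline{x} = \bigsqcup \overline{\hat{x}}^{\hat{\tau}}$ and $\hat{x}=\{x\}$, closedness of $\{x\}$ in $X$ corresponds exactly to closedness of $\{\hat{x}\}$ in $\hat{X}$, and non-closedness of $\mathop{\Downarrow} x = p^{-1}(\mathop{\Downarrow} \hat{x})$ corresponds to non-closedness of $\mathop{\Downarrow} \hat{x}$; so the two recurrence conditions match.

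Finally I would assemble both directions of the claimed equality. For $\supseteq$: if $|\hat{x}|>1$ the first step gives $x \in \mathrm{R}$, while if $\hat{x} \in \hat{\mathrm{R}}$ and $|\hat{x}|=1$ the second step gives $x \in \mathrm{R}$ (the remaining subcase $\hat{x} \in \hat{\mathrm{R}}$ with $|\hat{x}|>1$ is already covered by the first step). For $\subseteq$: given $x \in \mathrm{R}$, if $|\hat{x}|>1$ then $x \in p^{-1}(\{\hat{x} \mid |\hat{x}|>1\})$; otherwise $|\hat{x}|=1$ and the second step yields $\hat{x} \in \hat{\mathrm{R}}$, so $x \in p^{-1}(\hat{\mathrm{R}})$. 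I do not anticipate any real obstacle here; the only thing to be careful about is keeping the two cases cleanly separated and noting that in the non-$T_0$ case recurrence of $x$ is forced automatically by the presence of another element of $\hat{x}$ inside $\overline{x}$.
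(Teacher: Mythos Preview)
Your proposal is correct and follows essentially the same approach as the paper: a case split on whether $|\hat{x}|>1$ or $|\hat{x}|=1$, with the latter case handled via the correspondence $\mathop{\Downarrow} x = p^{-1}(\hat{\mathop{\Downarrow}} \hat{x})$ between derived sets. The only cosmetic difference is that the paper reproves the $|\hat{x}|=1$ equivalence inline (splitting further into minimal vs.\ non-minimal) rather than invoking Lemma~\ref{prop:06}, but the content is identical.
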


\begin{proof}  
Recall that a recurrent point is either minimal or non-$T_D$ (i.e. the derived set is not closed). 
Fix a point $x \in \mathrm{R}$. 
If $x \in \min X$, then $p(x) \in \min \hat{X} \subseteq \hat{\mathrm{R}}$. 
If $x$ is not $T_0$, then $p(x) \in \{ \hat{x} \in \hat{X} \mid |\hat{x}| > 1\}$. 
Thus we may assume that $x$ is $T_0$ but not minimal. 
Then the derived set $\mathop{\Downarrow}  x$ of $x$ is not closed. 
Since $\mathop{\Downarrow}  x = p^{-1}(p(\mathop{\Downarrow}x))$ is not closed, 
so is the image $\hat{\mathop{\Downarrow}} \hat{x} = p(\mathop{\Downarrow} x)$. 
This means that $\hat{x} \in \hat{\mathrm{R}}$. 
On the other hands, fix a point $x \in p^{-1}(\{ \hat{x} \in \hat{X} \mid |\hat{x}| > 1\} \cup \hat{\mathrm{R}})$. 
If $|\hat{x}| > 1$, then $x$ is not $T_0$ and so is recurrent.  
Thus we may assume that $\hat{x} \in \hat{\mathrm{R}}$ and $x$ is $T_0$. 
If $\hat{x}$ is minimal, then so is $x$. 
Thus we may assume that $\hat{\mathop{\Downarrow}} \hat{x}$ is not closed. 
Since $x$ is $T_0$, 
the inverse image $\mathop{\Downarrow} x = p^{-1}(\hat{\mathop{\Downarrow}} \hat{x})$ is not closed. 
This implies that $x$ is recurrent. 
\end{proof}

The previous lemma implies an equivalence for the non-wandering property. 

\begin{proposition}\label{lem:07}
Let $(X, \tau)$ be a topological space $X$ with the class space $(\hat{X}, \hat{\tau})$.  
Then  $X$ is non-wandering if and only if 
the union of $\hat{\tau}$-recurrent points and 
points of $\hat{X}$ whose cardinality is more than one is dense in $\hat{X}$. 
\end{proposition}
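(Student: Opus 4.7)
The approach is to reduce both sides of the equivalence to the density of a single set, and then bridge the two via Lemma \ref{lem071}. Concretely, I will show that the left-hand side says \emph{$\mathrm{R}$ is dense in $X$}, the right-hand side says \emph{$S := \{\hat{x} \in \hat{X} \mid |\hat{x}| > 1\} \cup \hat{\mathrm{R}}$ is dense in $\hat{X}$}, and these two density statements correspond to each other under the quotient map $p \colon X \to \hat{X}$.

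First I would establish that, by the convention applied throughout Section 2, the space $X$ is non-wandering precisely when every point of $X$ is non-wandering, i.e.\ for every $x \in X$ there is $A \subseteq \mathrm{R}$ whose closure is a \nbd of $x$. This is equivalent to $\mathrm{R}$ being dense in $X$: if $\mathrm{R}$ is dense then $\overline{\mathrm{R}} = X$ is a \nbd of every point (take $A := \mathrm{R}$), and conversely if every $x$ satisfies $x \in \mathrm{int}(\overline{A})$ for some $A \subseteq \mathrm{R}$, then $x \in \mathrm{int}(\overline{\mathrm{R}})$, giving $\mathrm{int}(\overline{\mathrm{R}}) = X$ and hence $\overline{\mathrm{R}} = X$.

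Next, Lemma \ref{lem071} tells us that $\mathrm{R} = p^{-1}(S)$. The remaining step is therefore to show that $p^{-1}(S)$ is dense in $X$ if and only if $S$ is dense in $\hat{X}$. The key observation, already recorded in Section 2.3, is that every open subset of $X$ is an upset with respect to the specialization preorder $\leq_{\tau}$, hence a union of classes $\hat{x}$; equivalently, every open set in $X$ is $p$-saturated. Since $p$ is a quotient map, this gives a bijection between open sets of $X$ and open sets of $\hat{X}$ via $V \mapsto p^{-1}(V)$. For any nonempty open $V \subseteq \hat{X}$ we have $p^{-1}(V) \cap p^{-1}(S) = p^{-1}(V \cap S)$, which is nonempty iff $V \cap S$ is nonempty by surjectivity of $p$. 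Thus $p^{-1}(S)$ meets every nonempty open subset of $X$ iff $S$ meets every nonempty open subset of $\hat{X}$, giving the desired equivalence.

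The main obstacle is not technical but conceptual: pinning down the correct reading of ``$X$ is non-wandering'' (every point is non-wandering, following the preliminaries' blanket convention) and verifying that the single set $\mathrm{R}$ may always be taken as the witnessing subset. Once that reduction is in place, the remainder of the argument is routine bookkeeping with the quotient map, relying only on the saturation of open sets under the specialization relation and Lemma \ref{lem071}.
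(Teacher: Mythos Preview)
Your proposal is correct and is exactly the argument the paper intends: the paper states the proposition without proof, prefacing it only with ``The previous lemma implies an equivalence for the non-wandering property,'' so the intended proof is precisely your reduction of non-wandering to density of $\mathrm{R}$, the invocation of Lemma~\ref{lem071} to identify $\mathrm{R} = p^{-1}(S)$, and the transfer of density along the quotient map using that open sets are $p$-saturated. You have simply written out the details that the paper leaves to the reader.
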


We also state hyperbolic-like property for topological spaces. 

\begin{lemma}\label{lem:00a}
A weakly hyperbolic-like topological space has no open points. 
Conversely, a $T_D$ space without open points is weakly hyperbolic-like but not of Anosov type. 
\end{lemma}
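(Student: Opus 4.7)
The plan is to treat the two directions separately, reading ``the space $X$ is weakly hyperbolic-like'' as ``every point of $X$ is weakly hyperbolic-like,'' in line with the convention used throughout the paper. The main lever is the identity $\mathop{\uparrow} x = \mathop{\mathrm{ker}} x$ recorded earlier, which forces $\mathop{\uparrow} x = \{x\}$ and hence $\mathop{\Uparrow} x = \emptyset$ as soon as $\{x\}$ is open.

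For the forward implication I would argue by contrapositive: assume $x$ is an open point and check that neither defining clause of weakly hyperbolic-likeness can hold. The third requirement $\mathop{\Uparrow} x \neq \emptyset$ of weak non-indifference already fails, so $x$ is not weakly non-indifferent. For weak saddle-likeness both disjuncts fail, because $\mathop{\uparrow} x = \{x\}$ is open (killing the first disjunct) and there is no element $y > x$ at all (so the second disjunct is vacuous). Thus an open point is never weakly hyperbolic-like, which is exactly the contrapositive of the first assertion.

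For the converse, let $X$ be $T_D$ with no open points and fix an arbitrary $x \in X$. I would split on whether $\mathop{\uparrow} x$ is open. If it is not, then $x$ is weakly saddle-like via the first disjunct, hence weakly hyperbolic-like. If it is open, I verify the three clauses of weak non-indifference: the first is the case assumption, the second (``$\mathop{\Uparrow} \hat{x}$ consists of $T_D$ points'') holds because the ambient space is $T_D$ (and is vacuous if $\mathop{\Uparrow} \hat{x} = \emptyset$), and for the third I argue that $\mathop{\Uparrow} x \neq \emptyset$, since otherwise $\mathop{\uparrow} x = \{x\}$ would be an open singleton, contradicting the hypothesis of no open points.

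For the non-Anosov claim I would suppose toward a contradiction that $X$ is of Anosov type; then there is a top $t \in X$ with $\overline{t} = X$, so $\mathop{\Downarrow} t = X - \{t\}$. The $T_D$ hypothesis at $t$ says this derived set is closed, whence $\{t\}$ is open, contradicting the absence of open points. I do not expect any step to be genuinely delicate: the whole argument is essentially bookkeeping on the definitions, with the only mild subtlety being the observation that the ``$\mathop{\Uparrow} \hat{x}$ consists of $T_D$ points'' clause comes for free from the $T_D$ hypothesis on $X$ itself.
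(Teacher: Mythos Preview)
Your proposal is correct and follows essentially the same route as the paper's own proof: both argue the forward direction by showing an open point fails all clauses of weak hyperbolic-likeness via $\mathop{\uparrow} x = \{x\}$, and the converse by splitting on whether $\mathop{\uparrow} x$ is open. Your treatment of the non-Anosov claim is in fact slightly more explicit than the paper's, which simply asserts that no singleton has closure equal to $X$ without spelling out that this follows from $T_D$ together with the absence of open points.
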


\begin{proof} 
Suppose that $X$ is a weakly hyperbolic-like topological space. 
Note that each open point is maximal. 
Suppose there is an open point $x$ of $X$.
Then $\mathop{\Uparrow} x = \emptyset$ 
and $\mathop{\uparrow} x = \{ x \}$ is open. 
Therefore $x$ is not weakly hyperbolic-like. 
Conversely, suppose that $X$ is a $T_D$ space without open points.
Fix any point $x$ of $X$. 
If $\mathop{\uparrow} x$ is not open, then $x$ is weakly saddle-like. 
If $\mathop{\uparrow} x$ is open, then $\mathop{\Uparrow} x \neq \emptyset$ 
and so $x$ is weakly non-indifferent. 
Since the closure of a singleton is not the whole space, 
the space $X$ is not of Anosov type. 
\end{proof}

We state equivalences of conditions of saddle-like subsets. 

\begin{lemma}
The following conditions are equivalent for a point $x$ of a topological space $X$: 

1. $ x \in \overline{X - \mathop{\uparrow} x}$.  

2. $\mathop{\uparrow} x$ is not a \nbd of $x$ (i.e. $x \notin \mathrm{int} \mathop{\uparrow} x$). 

3. $\mathop{\uparrow} x$ is not open. 
\end{lemma}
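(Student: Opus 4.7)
The plan is to establish the chain (1) $\Leftrightarrow$ (2) $\Leftrightarrow$ (3), both directions being short topological observations once one recalls the identification $\mathop{\uparrow} x = \mathop{\mathrm{ker}} x = \bigcap\{U \in \tau \mid x \in U\}$ from the preliminaries.

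For (1) $\Leftrightarrow$ (2), I would just invoke the elementary duality between interior and closure: for any subset $A$ of $X$,
\[
\mathrm{int}\, A = X - \overline{X - A}.
\]
Taking $A = \mathop{\uparrow} x$ gives $x \notin \mathrm{int}(\mathop{\uparrow} x)$ if and only if $x \in \overline{X - \mathop{\uparrow} x}$, which is exactly the equivalence of (2) and (1). No other content is needed.

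For (2) $\Leftrightarrow$ (3), one direction is trivial: if $\mathop{\uparrow} x$ is open, it is a neighborhood of $x$ since $x \in \mathop{\uparrow} x$. For the converse, suppose $\mathop{\uparrow} x$ is a neighborhood of $x$, so there exists an open set $U$ with $x \in U \subseteq \mathop{\uparrow} x$. Because $U$ is an open neighborhood of $x$ and $\mathop{\uparrow} x = \mathop{\mathrm{ker}} x$ is by definition the intersection of all such neighborhoods, the reverse inclusion $\mathop{\uparrow} x \subseteq U$ holds, yielding $U = \mathop{\uparrow} x$, which is therefore open.

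No step here looks like a genuine obstacle; the only thing to be careful about is not to assume extra separation. The argument works for an arbitrary topological space because the key identity $\mathop{\uparrow} x = \mathop{\mathrm{ker}} x$ was established in the preliminaries without any $T_i$ hypothesis, and the interior-closure duality is completely general.
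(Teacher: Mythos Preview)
Your proof is correct and essentially the same as the paper's. The paper uses the interior--closure duality for $(1)\Leftrightarrow(2)$ exactly as you do, and for $(2)\Rightarrow(3)$ it argues via ``every open set is an upset'' rather than via $\mathop{\uparrow} x = \ker x$, but this is the same observation in different clothing.
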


\begin{proof}
If $\mathop{\uparrow} x$ is open, then $\mathop{\uparrow} x$ is a \nbd of $x$. 
If $\mathop{\uparrow} x$ is a \nbd of $x$, then the fact that each open subset is a upset 
implies that $\mathop{\uparrow} x$ is open. 
This means that the conditions $2$ and $3$ are equivalent. 
Since $\overline{X - \mathop{\uparrow} x} = X - \mathrm{int} \mathop{\uparrow} x$,  the conditions $1$ and $2$ are equivalent. 
\end{proof}

\begin{lemma}
The following conditions are equivalent for points $x < y$ of a topological space: 

1. $ x \in \overline{(x, y] - \{ y \}}$.  

2. $(x, y] - \{ y \} \neq \emptyset$.

3. $(x, y) \neq \emptyset$ or $|  \hat{y}| > 1$. 
\end{lemma}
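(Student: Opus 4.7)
The plan is to establish $(1) \Rightarrow (2) \Rightarrow (3) \Rightarrow (1)$. The first implication is immediate: the closure of the empty set is empty, so $x \in \overline{(x,y] - \{y\}}$ forces $(x,y] - \{y\} \neq \emptyset$.

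For $(2) \Rightarrow (3)$, I would fix a witness $z \in (x,y] - \{y\}$ and split on whether $\hat{z} = \hat{y}$. Note that, in the paper's convention (as dictated by the identity $(x,y) = \mathop{\Uparrow}\hat{x} \cap \mathop{\Downarrow}\hat{y}$), the symbol $a < b$ abbreviates $a \leq b$ together with $\hat{a} \neq \hat{b}$. If $\hat{z} \neq \hat{y}$ then $z \leq y$ upgrades to $z < y$ strictly, so $z \in (x,y)$. Otherwise $\hat{z} = \hat{y}$ with $z \neq y$, so $\hat{y}$ contains at least two elements.

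For $(3) \Rightarrow (1)$, I would exhibit a point $z \in (x,y] - \{y\}$ with $x \leq z$ in the specialization preorder; then $x \in \overline{\{z\}} \subseteq \overline{(x,y] - \{y\}}$, which is exactly condition (1). If $(x,y) \neq \emptyset$, any $z \in (x,y)$ works directly since $x < z$ in particular gives $x \leq z$. Otherwise $|\hat{y}| > 1$ and I pick $z \in \hat{y} - \{y\}$: then $z \leq y$ and $z \neq y$, while transitivity with $x < y$ yields $x \leq z$; finally, $z \sim y$ rules out $\hat{x} = \hat{z}$ (else $\hat{x} = \hat{y}$, contradicting $x < y$), so $x < z$ and hence $z \in (x,y] - \{y\}$.

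No step looks like a real obstacle; the only thing requiring care is the paper's reading of the strict symbol $<$ through equivalence classes rather than mere inequality, which is precisely what makes the case $\hat{z} = \hat{y}$ in $(2) \Rightarrow (3)$ consistent with $z \neq y$ without forcing $z \in (x,y)$.
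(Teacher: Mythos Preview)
Your proof is correct and follows essentially the same idea as the paper's. The paper declares the equivalence of (2) and (3) ``obvious'' and then proves $(1)\Leftrightarrow(2)$ directly: for $(2)\Rightarrow(1)$ it simply picks any $z\in(x,y]-\{y\}$ and uses $x<z$ to get $x\in\mathop{\downarrow}z\subseteq\overline{(x,y]-\{y\}}$. Your cycle $(1)\Rightarrow(2)\Rightarrow(3)\Rightarrow(1)$ is a minor rearrangement --- in your step $(3)\Rightarrow(1)$ you are effectively reconstructing a witness for (2) from (3) and then applying the same closure argument --- and your explicit unpacking of the paper's strict symbol $<$ via classes is exactly what is needed to justify the ``obvious'' direction.
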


\begin{proof}
Obviously, the conditions $2$ and $3$ are equivalent. 
If $ x \in \overline{(x, y] - \{ y \}}$, then $(x, y] - \{ y \} \neq \emptyset$.
Conversely, suppose that $(x, y] - \{ y \} \neq \emptyset$. 
Then there is a point $z \in (x, y] - \{ y \}$. 
Since $x < z$, we have $x \in  \mathop{\downarrow} z \subseteq \overline{(x, y] - \{ y \}}$. 
\end{proof}

\section{Applications for flows}

%\subsection{Applications for flows} 

%By a flow, 
%we mean either an $\R$-action on a topological space. 
Let $v$ be a flow on a compact space $X$ 
and $\tau_v$ the quotient topology of the orbit space $X/v$. 
Recall that the orbit space consists of orbits (i.e. each class is an orbit). 
Note that $v$ is pointwise periodic \iff $\tau_v$ is $T_1$. 
% := \{ U \in \tau : \text{saturated} \}$ the saturated topology of $v$. 
%For any point $x$ of $X$, 
%denote by $O_v(x)$ the orbit of $x$. 
Recall that a subset of $X$ is saturated with respect to $v$ if it is a union of orbits. 
Lemma \ref{lem:00} implies the characterization of $T_{-1}$-separation property of the flow $v$. 

\begin{lemma} 
The quotient topology $\tau_v$ of a homeomorphism $v$ 
on a compact metrizable space is $T_{-1}$ if and only if each minimal set is a closed orbit.  
\end{lemma}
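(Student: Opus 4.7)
The plan is to derive this as a direct dynamical translation of Lemma~\ref{lem:00} applied to the orbit space $(X/v, \tau_v)$. First I would identify the specialization preorder: since $v$ is a homeomorphism and the quotient projection $q \colon X \to X/v$ is continuous, the $\tau_v$-closure of a single orbit $O'$ pulls back to the smallest closed $v$-saturated set containing $O'$, which is the $v$-invariant set $\overline{O'} \subseteq X$. Consequently, for orbits $O, O'$ of $v$, one has $O \leq_{\tau_v} O'$ if and only if $O \subseteq \overline{O'}$.

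Next I would characterize the $\leq_{\tau_v}$-minimal orbits. An orbit $O$ is preorder-minimal precisely when every orbit $O' \subseteq \overline{O}$ satisfies $\overline{O'} = \overline{O}$, which is exactly the statement that $\overline{O}$ is a minimal closed $v$-invariant subset of $X$, i.e.\ a classical minimal set of the dynamical system. Compactness of $X$, via Zorn's lemma applied to closed $v$-invariant subsets of any orbit closure, guarantees such minimal sets exist, so the $\leq_{\tau_v}$-minimal points of $X/v$ are exactly the orbits contained in some minimal set $M$, and the class $\hat{O}$ of such an orbit corresponds under $q$ to the union of all orbits inside $M$, namely $M$ itself.

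To conclude I would invoke Lemma~\ref{lem:00}: $\tau_v$ is $T_{-1}$ if and only if the class of each $\leq_{\tau_v}$-minimal point of $X/v$ is a singleton, which by the preceding identification amounts to saying that each minimal set $M = \overline{O}$ consists of a single orbit; since minimal sets are automatically closed, this is equivalent to every minimal set being a closed orbit. The only mildly delicate step is the identification of $\tau_v$-closures of singletons with $v$-invariant topological closures in $X$, for which one needs $v$ to be a homeomorphism so that closures of $v$-invariant sets remain $v$-invariant; all remaining bookkeeping is a straightforward translation of definitions.
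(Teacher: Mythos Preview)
Your proposal is correct and follows exactly the approach the paper intends: the paper simply asserts that Lemma~\ref{lem:00} implies this characterization without writing out a proof, and you have supplied the dynamical dictionary (specialization preorder on $X/v$ corresponds to orbit-closure inclusion, $\leq_{\tau_v}$-minimal points correspond to orbits lying in minimal sets, and the class $\hat{O}$ of such an orbit pulls back to the minimal set itself) needed to make that deduction explicit. Your reading of Lemma~\ref{lem:00} as ``each minimal point is $T_0$'' (equivalently, its class is a singleton) matches the paper's own parenthetical gloss ``$\min X$ is $T_0$'' and its proof.
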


%\begin{proof}
%Suppose that $\tau_v$ is $T_{-1}$. 
%Then each minimal set is a closed orbit.  
%Conversely, suppose that each minimal set is a closed orbit.  
%Fix a point $x$ which is not contained in a minimal set.  
%Then it suffices to show that 
%there is a saturated \nbd $U$ of $x$ such that 
%$U \nsupseteq \overline{O_v(x)}$. 
%Indeed, since $X$ is compact, 
%there is a minimal set $M$ which is contained in the orbit closure of $x$. 
%Then the complement $X - M$ is the desired \nbd of $x$. 
%\end{proof}

Now we apply the above results to flows.

\begin{lemma}
The following statement hold: 
\\
1.  
$\tau_v$ is w-$C_0$ 
\iff 
$v$ is not transitive. 
\\
2.  
$\tau_v$ is $S_{Q}$ \iff $x \in \overline{O_v(y)}$ or $y \in \overline{O_v(x)}$ for any points $x, y$ with $\overline{O_v(x)} \cap \overline{O_v(y)} \neq \emptyset$. 
\\
3. 
$\tau_v$ is $C_N$ \iff each orbit closure contains exactly one minimal set. 
\end{lemma}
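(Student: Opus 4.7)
The plan is to reduce each of the three assertions to a characterization already established in the paper, translated into the language of orbits via the specialization pre-order on $X/v$. The key preliminary observation is that orbit closures are invariant (hence saturated), so the closure of the class $[O_v(x)]$ in the quotient corresponds to $\overline{O_v(x)}/v$, and consequently $[O_v(y)] \leq_{\tau_v} [O_v(x)]$ if and only if $\overline{O_v(y)} \subseteq \overline{O_v(x)}$, equivalently $y \in \overline{O_v(x)}$.

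For (1), I would invoke Lemma \ref{lem:25}(4): $\tau_v$ is w-$C_0$ iff the orbit space has no top. By the translation above, a top in $X/v$ is a class $[O_v(x)]$ satisfying $\overline{O_v(x)} \supseteq O_v(y)$ for every $y$, i.e., $\overline{O_v(x)} = X$, which is precisely the existence of a dense orbit, i.e., transitivity. So the negation gives the claim.

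For (2), recall that $S_Q$ was defined by condition (6) of Lemma \ref{lem:25}, equivalently by the class space $\widehat{X/v}$ being a downward forest. A downward forest is characterized by: any two classes with a common lower bound are comparable. A common lower bound of $[O_v(x)]$ and $[O_v(y)]$ is an orbit $[O_v(z)]$ with $z \in \overline{O_v(x)} \cap \overline{O_v(y)}$, and conversely every such $z$ furnishes one; hence ``having a common lower bound'' is the same as $\overline{O_v(x)} \cap \overline{O_v(y)} \neq \emptyset$. ``Comparable'' translates to $x \in \overline{O_v(y)}$ or $y \in \overline{O_v(x)}$. Assembling these translations yields the stated equivalence.

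For (3), I would apply Proposition \ref{lem:01b}(4): $\tau_v$ is $C_N$ iff for each orbit $O$ the downset $\overline{[O]}$ is down-directed, i.e., any two orbits $O_1, O_2 \subseteq \overline{O}$ admit an orbit $O_3 \subseteq \overline{O_1} \cap \overline{O_2}$. For the forward implication, if $\overline{O}$ contained two distinct minimal sets $M_1 \neq M_2$, then picking $O_i \subseteq M_i$ would force a common lower bound inside the empty set $M_1 \cap M_2$, contradicting $C_N$; existence of at least one minimal set in $\overline{O}$ is Zorn's lemma on the compact invariant set $\overline{O}$. For the converse, let $M$ be the unique minimal set inside $\overline{O}$; since every $\overline{O_i} \subseteq \overline{O}$ contains a minimal set and any such is also minimal in $\overline{O}$, that minimal set must be $M$, whence $M \subseteq \overline{O_1} \cap \overline{O_2}$ and any orbit in $M$ serves as the required common lower bound. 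The only subtle point is this last step: one must use uniqueness twice (inside $\overline{O_1}$ and inside $\overline{O_2}$) together with the fact that minimality is an intrinsic property of invariant closed sets, so a minimal set of the subsystem on $\overline{O_i}$ is a minimal set of the whole flow.
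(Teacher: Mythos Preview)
Your proof is correct and follows essentially the same approach as the paper: part~1 via Lemma~\ref{lem:25}(4) (dense orbit $=$ top), part~2 via the downward-forest characterization of $S_Q$, and part~3 via Proposition~\ref{lem:01b}(4). In fact your treatment of the converse in part~3 is more explicit than the paper's, which simply asserts that ``each orbit closure contains exactly one minimal set'' implies down-directedness without spelling out the argument through the minimal sets of $\overline{O_1}$ and $\overline{O_2}$.
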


\begin{proof}
1. Since a dense orbit corresponds to a top with respect to the quotient topology, 
the assertion holds. 

2. 
Suppose that $\tau_v$ is $S_{Q}$. 
Then the quotient space $X/v$ is a downward forest.  
For any points $x, y$ with 
$\overline{O_v(x)} \cap \overline{O_v(y)} \neq \emptyset$, 
fix $z \in \overline{O_v(x)} \cap \overline{O_v(y)}$. 
Since the upset of a point is a pre-chain, 
we have either $z \leq_{\tau} x \leq_{\tau} y$ or $z \leq_{\tau} y \leq_{\tau} x$. 
This means $x \in  \overline{O_v(y)}$ or $y \in \overline{O_v(x)}$. 
Conversely, 
suppose $x \in  \overline{O_v(y)}$ or $y \in \overline{O_v(x)}$ 
for any points $x, y$ with 
$\overline{O_v(x)} \cap \overline{O_v(y)} \neq \emptyset$. 
Assume that the quotient space $X/v$ is not a downward forest.  
Then there are a point $z \in X$ and a pair $x, y \in \mathop{\uparrow}_{\tau} z$ of incomparable elements. 
Since $z \in \overline{O_v(x)} \cap \overline{O_v(y)}$, 
the hypothesis implies that $x \in \overline{O_v(y)}$ or $y \in \overline{O_v(x)}$. 
Therefore either $x \leq y$ or $y \leq x$, 
which contradicts to the incomparability of $x$ and $y$. 
Thus the quotient space $X/v$ is a downward forest.

3. 
Since $X$ is compact, each orbit closure contains at least one minimal set. 
Suppose that $\tau_v$ is $C_N$. 
By the down-directed property, each orbit closure contains contains exactly one minimal set. 
Conversely, suppose that each orbit closure contains contains exactly one minimal set. 
This implies that the closure of each point is  down-directed. 
By Proposition \ref{lem:01b}, $\tau_v$ is $C_N$. 
\end{proof}

We characterize the recurrence of a flow.

\begin{theorem} 
Then the quotient topology $\tau_v$ of a flow $v$ on a compact metrizable space is $C_{D}$ if and only if $v$ is recurrent. 
\end{theorem}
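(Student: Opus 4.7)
The plan is to derive the equivalence directly from Proposition \ref{lem:01b}(2), combined with the earlier remarks identifying an orbit $O$ as $\tau_v$-proper iff $\mathop{\Downarrow}_{\tau_v} O$ is closed, and identifying a point $x$ as dynamically recurrent iff $O=O_v(x)$ is $\tau_v$-closed or $\tau_v$-non-proper. Since $\tau_v$ is $C_D$ iff every orbit (viewed as a point of $X/v$) is a $C_D$ point of $\tau_v$, and $v$ is recurrent iff each orbit consists of recurrent points, I would first reduce the statement to an orbit-by-orbit claim: $O$ is a $C_D$ point iff $x \in O$ is recurrent.

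Next, by Proposition \ref{lem:01b}(2), $O$ is $C_D$ iff $O$ is $\leq_{\tau_v}$-minimal or $O\in\overline{\mathop{\Downarrow}_{\tau_v} O}$. Since every downset with respect to a specialization pre-order is closed and $\mathop{\downarrow}_{\tau_v} O = \{O\}\sqcup \mathop{\Downarrow}_{\tau_v} O$, the closure $\overline{\mathop{\Downarrow}_{\tau_v} O}$ must equal either $\mathop{\Downarrow}_{\tau_v} O$ itself or all of $\mathop{\downarrow}_{\tau_v} O$. Hence $O\in\overline{\mathop{\Downarrow}_{\tau_v} O}$ is equivalent to $\mathop{\Downarrow}_{\tau_v} O$ failing to be closed, i.e., $O$ being $\tau_v$-non-proper.

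By the cited recurrence remark, $x$ is recurrent iff $O$ is $\tau_v$-closed or $\tau_v$-non-proper. The task thus reduces to equating ``$O$ is $\tau_v$-minimal or $\tau_v$-non-proper'' with ``$O$ is $\tau_v$-closed or $\tau_v$-non-proper''. One direction is immediate since $\tau_v$-closed implies $\tau_v$-minimal. For the other direction, if $O$ is $\tau_v$-minimal but not $\tau_v$-closed, then the class $\hat{O}$ is $\tau_v$-closed with $|\hat{O}|>1$, and every element of $\hat{O}$ has the same $\tau_v$-closure; consequently $\mathop{\Downarrow}_{\tau_v} O \supseteq \hat{O}\setminus\{O\}\neq\emptyset$ has closure containing $O$, forcing $\mathop{\Downarrow}_{\tau_v} O$ to be non-closed, so $O$ is $\tau_v$-non-proper.

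The only mildly delicate point is the last bookkeeping step bridging ``$\tau_v$-minimal'' (emerging from the $C_D$ characterization) with ``$\tau_v$-closed'' (appearing in the recurrence remark); compactness and metrizability of $X$ are inherited from the ambient setting of the section and enter only implicitly, through the remarks being invoked. No further analytic input is needed, and each implication runs through the two characterizations just combined.
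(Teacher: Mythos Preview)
Your argument is correct, modulo one small slip in wording: the phrase ``every downset with respect to a specialization pre-order is closed'' is false in general; what you actually need (and what makes the step go through) is that the particular downset $\mathop{\downarrow}_{\tau_v} O=\overline{O}$ is closed, so that $\overline{\mathop{\Downarrow}_{\tau_v} O}\subseteq\mathop{\downarrow}_{\tau_v} O=\{O\}\sqcup\mathop{\Downarrow}_{\tau_v} O$ forces the dichotomy you state. With that fix the chain of equivalences is sound.

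Your route differs from the paper's. Both directions in the paper start from the same remark (dynamical recurrence $\Leftrightarrow$ $\tau_v$-closed or $\tau_v$-non-proper), but for the implication ``$v$ recurrent $\Rightarrow$ $C_D$'' the paper does not use the order-theoretic bridge you found. Instead, for a non-closed recurrent orbit $O$ it invokes Corollary~2.3 of \cite{Y2} to conclude that the orbit class $\hat O$ is uncountable, whence $O\in\overline{\hat O\setminus\{O\}}\subseteq\overline{\mathop{\Downarrow}_{\tau_v} O}$. Your argument replaces this external input by the purely order-theoretic observation that ``$\tau_v$-minimal but not $\tau_v$-closed'' already forces $|\hat O|>1$ and hence non-properness; this is more elementary and keeps the proof self-contained within the paper (once the recurrence remark, essentially Lemma~\ref{lem:23}, is granted). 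The paper's detour through \cite{Y2} yields the stronger incidental fact that $\hat O$ is uncountable, but that extra information is not needed for the theorem itself.
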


\begin{proof}
Note that the derived set $\mathop{\Downarrow}_{\tau_v} O$ for a proper orbit $O$ is closed. 
Suppose that the quotient topology $\tau_v$ is $C_{D}$. 
Then each proper orbit is closed. 
Conversely, suppose that $v$ is recurrent. 
Then each proper orbit is closed. 
Fix a recurrent orbit $O$ which is not closed. 
By Corollary 2.3 \cite{Y2}, 
the orbit class of $O$ contains uncountably many points 
and so $O \in \overline{\hat{O} - O}$. 
\end{proof}

\subsection{Hyperbolic-like property for topology}

Let $v$ be a flow on $X$ and $\tau_v$ the quotient topology of the orbit space $X/v$.

\begin{lemma}\label{lem:23}
%Let $x$ be a point of  a topological space $X$. 
%If an orbit of a topological space is $\tau_v$-recurrent, then the orbit is $v$-recurrent. 
A $\tau_v$-recurrent orbit is $v$-recurrent. 
If $X$ is locally compact Hausdorff, then the converse holds. 
\end{lemma}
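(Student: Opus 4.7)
The plan is to reduce to the characterization of $v$-recurrence recorded in the preliminaries: an orbit $O$ is $v$-recurrent if and only if, as a point of $X/v$, it is $\tau_v$-closed or $\tau_v$-non-proper (i.e.\ $\mathop{\Downarrow}_{\tau_v} O$ is not closed). Once this is accepted, $\tau_v$-recurrence of $O$ unfolds by definition to exactly the same condition — $O$ is $\tau_v$-closed or has non-closed $\tau_v$-derived set — so the first implication is tautological. The second sentence of the lemma then amounts to verifying the ``hard'' direction of this preliminary characterization under the LCH hypothesis.

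For the converse, I argue by contraposition. Suppose $O$ is not $\tau_v$-recurrent; then $O$ is not closed in $X$ and $\mathop{\Downarrow}_{\tau_v} O$ is closed. Equivalently, $O$ is a non-closed proper orbit, so $O$ is open in $\overline{O}$ via some open $U \subseteq X$ with $U \cap \overline{O} = O$. Fix $x \in O$. Since $X$ is Hausdorff and $O$ is not closed, $O$ is not compact and in particular not periodic, so the orbit map $\phi : \R \to O$, $t \mapsto v_t(x)$, is injective. By a classical result on continuous flows on LCH spaces, properness of $O$ implies that $\phi$ is a homeomorphism onto $O$ equipped with its subspace topology from $X$.

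I then derive a contradiction to $x \in \omega(x)$. Suppose there is a net $t_\alpha \to +\infty$ with $v_{t_\alpha}(x) \to x$. Since $U$ is a neighborhood of $x$, eventually $v_{t_\alpha}(x) \in U$, and since $v_{t_\alpha}(x) \in \overline{O}$ always, eventually $v_{t_\alpha}(x) \in U \cap \overline{O} = O$. Continuity of $\phi^{-1}$ on $O$ then forces $t_\alpha = \phi^{-1}(v_{t_\alpha}(x)) \to \phi^{-1}(x) = 0$, contradicting $t_\alpha \to +\infty$. The symmetric argument with $t_\alpha \to -\infty$ rules out $x \in \alpha(x)$, so $x$ is not $v$-recurrent.

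The main obstacle is precisely the LCH-dependent embedding property of the orbit map on a proper orbit. Without local compactness the map $\phi$ can be a continuous injection that fails to be a homeomorphism onto its image, and a net $v_{t_\alpha}(x) \to x$ lying in $O$ then gives no control over the parameter net $t_\alpha$, so the converse can genuinely fail. All the technical content of the lemma sits in invoking this classical embedding fact to convert convergence in $X$ into convergence of time parameters in $\R$.
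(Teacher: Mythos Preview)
Your handling of the first implication is circular. The preliminary ``Note'' you invoke --- that $v$-recurrence of a point is equivalent to the orbit being $\tau_v$-closed or $\tau_v$-non-proper --- is word for word the assertion that $v$-recurrence and $\tau_v$-recurrence coincide, which is exactly the content of the lemma. The paper does not treat this as already established: its proof of the forward direction argues directly that if $\overline{O_x}-O_x$ is not closed then $x\in\overline{\overline{O_x}-O_x}$ and hence $x\in\alpha(x)\cup\omega(x)$. Moreover the counterexample recorded immediately after the lemma (a dense orbit on a metrizable space that is $v$-recurrent but not $\tau_v$-recurrent) shows the bi-conditional in that preliminary Note is false without further hypotheses, so citing it as input is not legitimate. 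You need to supply the short direct argument here rather than appeal to the Note.

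For the converse your route is genuinely different from the paper's. The paper runs a self-contained Baire category argument: $O_x$, being open in the locally compact Hausdorff space $\overline{O_x}$, is Baire; the sets $U_n=v(\R\setminus[n,n+1],x)$ are open in $O_x$ (their complements are compact, hence closed) and, by $v$-recurrence, dense; yet $\bigcap_{n\in\Z}U_n=\emptyset$, a contradiction. You instead invoke the classical fact that a locally closed aperiodic orbit of a flow on an LCH space is parametrized homeomorphically by $\R$, and then read off non-recurrence from continuity of $\phi^{-1}$. Your argument is correct and arguably cleaner to state, but note that the embedding theorem you cite is itself proved by a Baire-category argument of the same shape, so the two approaches share the same engine under the hood.
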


\begin{proof}
Suppose that $O_x$ is an $\tau_v$-recurrent orbit. 
Then either $O_x$ is closed or $\overline{O_x} - O_x$ is not closed. 
If $O_x$ is closed, then $x \in \alpha(x) \cup \omega(x)$ trivially. 
Thus we may assume that $\overline{O_x} - O_x$ is not closed. 
This implies that $x \in \overline{\overline{O_x} - O_x}$ and so $x  \in \alpha(x) \cup \omega(x)$. 
Second, suppose that $X$ is locally compact Hausdorff and $x$ is a recurrent point. 
We may assume that $O_x$ is not closed. 
We show that $\overline{O_x} - O_x$ is not closed. 
Otherwise $\overline{O_x} - O_x$ is closed. 
Applying the Baire category theorem, 
since $\overline{O_x}$ is locally compact Hausdorff and so Baire and 
since each open subset of a Baire space is a Baire space, 
the orbit $O_x$ is Baire. 
Let $U_n := v(\R - [n, n+1], x) \subset O_x$ for $n \in \Z$. 
Since $O_x$ is $v$-recurrent, each $U_n$ is open dense in $O_x$. 
Since $O_x$ is Baire, we have $\bigcap_n U_n$ is dense, which contradicts to the definition of $U_n$. 
Therefore $O_x$ is $\tau_v$-recurrent.  
\end{proof}

There is a transitive flow  on a metrizable space such that each regular point is not $\tau_v$-recurrent but $v$-recurrent. 
Indeed,  
applying a dump function to an irrational rotation, consider a vector field on $\T^2$ with one singular point $x$ such that each regular orbit is dense. 
Let $X  := O \sqcup \{ x \}$ be the union of $x$ and a regular orbit $O$ and $v$ the restriction of the flow. 
Then $X$ is metrizable and $v$ consists of one $v$-recurrent non-closed orbit $O$ and one singularity $x$. 
Thus $\overline{O} -O = \{ x \}$ is closed and so $O$ is not $\tau_v$-recurrent but $v$-recurrent.  

We state connectivity of $\alpha$-limit sets.

\begin{lemma}\label{01}
Any $\alpha$-limit set of a flow on a sequentially compact space $X$ is connected. 
\end{lemma}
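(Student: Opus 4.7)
The plan is to argue by contradiction, combining continuity of the orbit map $\gamma(t) := v_t(x) \colon \R \to X$ with sequential compactness of $X$. Suppose that $\alpha(x)$ were disconnected. Then there exist open subsets $U, V \subseteq X$ with $\alpha(x) \subseteq U \cup V$, both $\alpha(x) \cap U$ and $\alpha(x) \cap V$ nonempty, and $\alpha(x) \cap U \cap V = \emptyset$. Set $F := \alpha(x) \setminus U$ and $G := \alpha(x) \setminus V$, which form a decomposition of $\alpha(x)$ into two nonempty disjoint subsets each closed in $\alpha(x)$.

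The first substantive step is to replace $U, V$ by \emph{disjoint} open sets still separating $F$ from $G$ and still covering $\alpha(x)$. Since $\alpha(x)$ is a closed subset of the sequentially compact space $X$, it is itself sequentially compact, and $F, G$ are sequentially compact disjoint subsets of $X$, so a separation property of $X$ (e.g.\ Hausdorffness, which is standard in flow contexts) allows us to thicken to disjoint open neighborhoods $U \supseteq F$, $V \supseteq G$ with $\alpha(x) \subseteq U \cup V$ and $U \cap V = \emptyset$.

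Next, pick $p \in F \subseteq U$ and $q \in G \subseteq V$. By definition of $\alpha(x)$, there exist sequences $s_n, t_n \to -\infty$ with $\gamma(s_n) \to p$ and $\gamma(t_n) \to q$, so eventually $\gamma(s_n) \in U$ and $\gamma(t_n) \in V$. After passing to subsequences and reindexing, we may assume $s_1 > t_1 > s_2 > t_2 > \cdots \to -\infty$. For each $n$ the interval $[t_n, s_n] \subseteq \R$ is connected; since $\gamma^{-1}(U)$ and $\gamma^{-1}(V)$ are disjoint open subsets of $\R$ each containing one endpoint of this interval, they cannot cover it, so there exists $r_n \in [t_n, s_n]$ with $\gamma(r_n) \in X \setminus (U \cup V)$. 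Note that $r_n \to -\infty$.

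Finally, by sequential compactness of $X$, extract a subsequence $\gamma(r_{n_k})$ converging to some $r \in X$. Since $r_{n_k} \to -\infty$, we have $r \in \alpha(x) \subseteq U \cup V$; but also $r \in X \setminus (U \cup V)$, which is closed, a contradiction. The main obstacle is the thickening step: in a general sequentially compact space, separating the two disjoint closed subsets $F$ and $G$ of $\alpha(x)$ by disjoint open subsets of $X$ is not automatic and implicitly requires a Hausdorff-type hypothesis on the ambient space, which I expect is the intended setting.
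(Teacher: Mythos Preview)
Your argument follows the same contradiction scheme as the paper's: assume $\alpha(x)$ splits under two open sets $U,V$, use continuity of the orbit to produce arbitrarily negative times at which the orbit lies in $X\setminus(U\cup V)$, and then invoke sequential compactness to obtain a limit point of $\alpha(x)$ outside $U\cup V$. The only structural difference is that the paper first reduces to $X$ connected (since an orbit, being a continuous image of $\R$, lies in a single component) and argues that $A:=X\setminus(U\sqcup V)$ is nonempty closed, hence sequentially compact, whereas you instead use the connectedness of each interval $[t_n,s_n]\subset\R$ to find the intermediate times $r_n$; these are equivalent manoeuvres. On the ``thickening'' issue you flag: the paper simply \emph{posits} disjoint open $U$ and $V$ with $\alpha(z)\subseteq U\sqcup V$ from the outset, without further comment, so your concern about an implicit separation hypothesis applies equally to the paper's own proof; you are not missing an idea that the paper supplies.
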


\begin{proof}
Since an orbit is a continuous map of $\R$ and so is connected, 
it is contained in a connected component of $X$. 
Thus we may assume that $X$ is connected .
Assume that there is a disconnected $\alpha$-limit set $\alpha (z)$. 
Put $U$ and $V$ disjoint open subsets each of which intersects $\alpha (z)$ with $\alpha (z) \subseteq U \sqcup V$. 
Since $X$ is connected, the complement $A := X - (U \sqcup V)$ is nonempty and closed. 
The sequential compactness implies that $A$ is sequentially compact. 
For any $T > 0$, there are numbers $t_U, t_A, t_V > T$ such that 
$v^{t_U}(z) \in U, v^{t_A}(z) \in A, v^{t_V}(z) \in V$. 
Since $A$ is sequentially compact, we have $A \cap \alpha(z) \neq \emptyset$, 
which contradicts to $\alpha (z) \subseteq U \sqcup V$. 
\end{proof}

We state a property of attractors.

\begin{lemma}\label{lem02}
Let $v$ be a flow on a sequentially compact space $X$ and $F$ an attractor with a basin $U$ of attraction.  
Any point $z \in U$ with $\alpha (z) \cap F \neq \emptyset$ is contained in $F$. 
\end{lemma}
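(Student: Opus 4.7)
The plan is to exploit the positive invariance of the basin $U$ together with the characterization $F = \bigcap_{t>0} v_t(U)$. I will show that $v_{-s}(z) \in U$ for every $s > 0$; equivalently, $z \in v_s(U)$ for every $s > 0$, so that $z$ lies in the intersection defining $F$.

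First, pick a point $p \in \alpha(z) \cap F$. Since $F \subseteq U$ and $U$ is open, $U$ is an open neighborhood of $p$. By the definition of the $\alpha$-limit set, $p \in \overline{\{v_t(z) \mid t < n\}}$ for every $n \in \R$. Hence for any fixed $s > 0$, taking $n := -s$, there exists $t < -s$ with $v_t(z) \in U$.

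Now write $v_{-s}(z) = v_{-s - t}\bigl(v_t(z)\bigr)$. Since $t < -s$ we have $-s - t > 0$, and by positive invariance of the basin, $v_{-s - t}(U) \subseteq U$. Combined with $v_t(z) \in U$, this gives $v_{-s}(z) \in U$, i.e.\ $z \in v_s(U)$. As $s > 0$ was arbitrary, $z \in \bigcap_{s > 0} v_s(U) = F$.

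I do not expect a serious obstacle here: the sequential compactness hypothesis is not even needed for this statement, as the closure definition of $\alpha(z)$ supplies the approximating times directly, and the argument only requires positive invariance of $U$ together with the identity $v_{-s} = v_{-s-t} \circ v_t$. The only subtlety to keep straight is the sign of $-s - t$, which is positive precisely because $t$ was chosen less than $-s$; this is what lets us apply positive invariance rather than running the flow backward.
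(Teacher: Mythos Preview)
Your argument is correct and in fact cleaner than the paper's. The paper first argues that $\alpha(z)\subseteq F$ (implicitly using the preceding connectivity lemma and sequential compactness to rule out accumulation of the backward orbit outside $U$), then extracts a large $N$ with $v_{-t-N}(z)\in U$ for all $t\ge 0$, and finally applies the same identity $v_{-N}(z)=v_t(v_{-t-N}(z))\in v_t(U)$ to conclude $v_{-N}(z)\in F$. Your route bypasses both the inclusion $\alpha(z)\subseteq F$ and the need for the backward tail to lie entirely in $U$: for each fixed $s>0$ you only need a \emph{single} time $t<-s$ with $v_t(z)\in U$, which the bare definition of $\alpha(z)$ supplies, and positive invariance does the rest. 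As you note, this makes the sequential compactness hypothesis superfluous for this lemma, which is a genuine gain.
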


\begin{proof}
Note that each orbit of a flow is contained in a connected component. 
Let $z \in U$ be a point with $\alpha (z) \cap F \neq \emptyset$. 
Since $v^t(z) \in U$ for any $t > 0$, we have $\alpha (z) \subseteq U$. 
Then $ \alpha (z) \subseteq \bigcap_{t>0} v^t(U) = F$. 
Then there is a large number $N>0$ such that $v^{-t-N}(z) \in U$ for any $t \geq 0$. 
Then $v^{-N} (z) = v^{t}(v^{-t-N} (z)) \in v^{t}(U)$ and so $v^{-N}(z) \in \bigcap_{t > 0}v^t(U) = F$. 
This implies $z \in F$. 
\end{proof}

We show that $v$-attractors are non-indifferent.  

\begin{proposition}\label{lem:24}
An orbit contained in an attractor $F \subsetneq X$ for a flow on a sequentially compact  space $X$ is non-indifferent with respect to $\tau_v$. 
\end{proposition}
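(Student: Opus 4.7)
The plan is to verify the three ingredients of non-indifference for the orbit $O$ viewed as a point of $X/v$ with topology $\tau_v$: (i) $\mathop{\uparrow}_{\tau_v} O$ is open, (ii) $\mathop{\Uparrow}_{\tau_v}\hat O \neq \emptyset$, and (iii) every element of $\mathop{\Uparrow}_{\tau_v}\hat O$ is $\tau_v$-$T_D$. Throughout I use that $F$ is a minimal closed invariant set, so every orbit $O' \subseteq F$ has $\overline{O'} = F$; hence the class $\hat O$ in the orbit space is $\{O' : O' \subseteq F\}$, and $O \leq_{\tau_v} O'$ is equivalent to $F \subseteq \overline{O'}$.

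For (i), I would identify the preimage
\[
V := \{y \in X : F \subseteq \overline{O_v(y)}\} \;=\; p^{-1}\bigl(\mathop{\uparrow}_{\tau_v} O\bigr)
\]
with the $v$-saturation $\widetilde U := \bigcup_{t \in \R} v_t(U)$ of the basin, which is an open $v$-saturated subset of $X$. The inclusion $V \subseteq \widetilde U$ is immediate, since $F \subseteq \overline{O_v(y)}$ forces the orbit of $y$ to meet the open neighborhood $U \supseteq F$. For $\widetilde U \subseteq V$, by $v$-saturation I may assume $y \in U$; sequential compactness of $X$ makes $\omega(y)$ a non-empty closed invariant set, the forward-invariance $v_t(U) \subseteq U$ combined with $F = \bigcap_{t>0} v_t(U)$ forces $\omega(y) \cap F \neq \emptyset$, and minimality of $F$ then upgrades this to $\omega(y) \supseteq F$, giving $F \subseteq \overline{O_v(y)}$.

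For (ii), pick $y \in U \setminus F$ (non-empty since $U \supsetneq F$); then $y \in \widetilde U = V$ places $O_v(y)$ into $\mathop{\uparrow}_{\tau_v} O$, while $y \in \overline{O_v(y)} \setminus F$ shows $\overline{O_v(y)} \supsetneq F$, so $O_v(y) \in \mathop{\Uparrow}_{\tau_v}\hat O$. For (iii), given $O' \in \mathop{\Uparrow}_{\tau_v}\hat O$, I would show $O'$ is a proper orbit, which is exactly the condition that $O'$ be $\tau_v$-$T_D$. The preceding steps give $\omega(O') = F$, and Lemma \ref{lem02} applied to any $y \in O' \cap U \subseteq U \setminus F$ yields $\alpha(O') \cap F = \emptyset$. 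Invariance of $F$ forces $O' \cap F = \emptyset$, so $\omega(O') \cap O' = \emptyset$; and if some $y \in O'$ satisfied $y \in \alpha(y)$, then $\overline{O_v(y)} \subseteq \alpha(y)$ would yield $F \subseteq \alpha(y) \cap F$, contradicting the previous application of Lemma \ref{lem02}. Hence $\overline{O'} \setminus O' = \omega(O') \cup \alpha(O')$ is closed and $O'$ is proper.

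The main obstacle is the backward inclusion in (i): showing $\omega(y) \cap F \neq \emptyset$ for $y \in U$ from the bare definition $F = \bigcap_{t > 0} v_t(U)$. A naive bound gives only $\omega(y) \subseteq \bigcap_{t > 0} \overline{v_t(U)}$, which can strictly exceed $F$; closing the gap requires a careful subsequence argument combining sequential compactness with the forward-invariance of $U$, and invoking minimality of $F$ to upgrade $\omega(y) \cap F \neq \emptyset$ to $\omega(y) \supseteq F$.
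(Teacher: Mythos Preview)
Your proposal is correct and follows the paper's proof closely: both identify $\mathop{\uparrow}_{\tau_v} O$ with the saturation $V = \bigcup_{t\in\R} v_t(U)$ of the basin, both pick a point of $U\setminus F$ to witness $\mathop{\Uparrow}_{\tau_v}\hat O \neq \emptyset$, and both show every orbit $O'$ in $V\setminus F$ is $T_D$ by establishing that $\overline{O'}\setminus O' = \alpha(O')\cup\omega(O')$.

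The only noteworthy difference is in step~(iii). The paper proves the stronger intermediate fact $\alpha(z)\cap V = \emptyset$ (if $w\in\alpha(z)\cap V$ then $F\subseteq\overline{O_w}\subseteq\alpha(z)$, forcing $z\in F$ by the same mechanism as Lemma~\ref{lem02}), and from this concludes that $O_z$ is relatively closed in the open set $V\setminus F$. You instead invoke Lemma~\ref{lem02} directly to obtain only $\alpha(O')\cap F=\emptyset$, and then supply a separate non-recurrence argument to rule out $O'\cap\alpha(O')\neq\emptyset$. Both routes are valid; the paper's gives a slightly stronger byproduct (localization of $\alpha$ outside the whole basin), while yours makes the dependence on Lemma~\ref{lem02} explicit, which is arguably cleaner since that lemma is stated precisely for this purpose. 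One small point: in~(iii) you write ``the preceding steps give $\omega(O')=F$,'' but step~(i) only yields $\omega(O')\supseteq F$; the reverse inclusion needs the same nested-basin argument you flag as the main obstacle. The paper's proof asserts $\omega(z)=F$ equally tersely, so you are not missing anything the paper supplies.
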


\begin{proof} 
Let $x$ be a point of an attractor $F$ for $v$ and $U$ a basin of attraction of $F$. 
Then $\mathop{\downarrow}_{\tau_v} O_x = F$ and $U - F \neq \emptyset$. 
The minimality implies that  $\hat{x} = F$ is closed. 
Let $V = \bigcup_{t \in \R}v^t(U)$. 
We show that $\mathop{\uparrow}_{\tau_v} O_x = V$. 
Indeed, fix any point $y \in V$. 
By the hypothesis, the fact $F = \bigcap_{t > 0}v^t(U)$ implies $\overline{O_y} \cap F \neq \emptyset$. 
The minimality of $F$ implies $F \subset \overline{O_y}$. 
Thus $V \subseteq \mathop{\uparrow}_{\tau_v} O_x$. 
Fix a point $y \notin V$. 
Since $V$ is invariant open, we obtain $\overline{O_y} \cap F = \emptyset$. 
Therefore $\mathop{\uparrow}_{\tau_v} O_x = V$. 
Since $v^t$ is a homeomorphism, the subset $V = \mathop{\uparrow}_{\tau_v} O_x$ is open.  
Fix any point $z \in V - F$. 
Then $\omega (z) = F$. 
We show that $\alpha (z) \cap V = \emptyset$. 
Otherwise $\alpha (z) \cap V \neq \emptyset$. 
Then the intersection $\alpha (z) \cap V$ is saturated and closed in $V$. 
Put $w \in \alpha (z) \cap V$. 
Since $F \cap \overline{O(y)} \neq \emptyset$ for any point $y \in V$, 
we have that $\emptyset \neq F \cap \overline{O(w)} \subseteq F \cap \alpha (z)$ and so $z \in F$, 
which contradicts to the choice of $z$. 
Thus the orbit of any point $z \in V - F$ is closed in the open subset $V- F$. 
Therefore $\mathop{\Downarrow}_{\tau_v} O_z = \overline{O_z} - O_z = \overline{O_z} \setminus (V - F) = \alpha (z) \cup \omega (z)$ is closed and so $O_z$ is $T_D$.  
Therefore $\mathop{\Uparrow}_{\tau_v} O_x - O_x \neq \emptyset$ consists of $T_D$ orbits. 
\end{proof}

We state the relation between saddle sets and weakly $\tau_v$-saddle-like sets. 

\begin{lemma}\label{25} 
Let $F$ be a saddle set with at most countably many separatrices of a flow $v$ on a sequentially compact space $X$. 
If each \nbd of $F$ intersects uncountably many orbits, then each orbit in $F$ is weakly $\tau_v$-saddle-like. 
\end{lemma}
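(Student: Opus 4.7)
The plan is to fix an orbit $O_0 \subseteq F$ and split on whether $\mathop{\uparrow}_{\tau_v} O_0$ is open in $X/v$. If it is not open, the first clause in the definition of weakly $\tau_v$-saddle-like holds at once; so the substantive work will be under the assumption that $\mathop{\uparrow}_{\tau_v} O_0$ is open. In that case, since $\overline{O_0}=F$ by the minimality of $F$, its saturated preimage in $X$ equals $M := \{z \in X : F \subseteq \overline{O_z}\}$, which is then an open saturated neighborhood of $F$.

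I will next exploit the cardinality hypotheses to select $y \in M \setminus F$ lying on a non-separatrix orbit: this is possible because every neighborhood of $F$ meets uncountably many orbits while $F$ admits only countably many separatrices. Then $F \subsetneq \overline{O_y}$, and since $F \cap \alpha(y)$ and $F \cap \omega(y)$ are closed invariant subsets of the minimal set $F$, each equals $\emptyset$ or $F$; combined with $F \subseteq \alpha(y) \cup \omega(y)$ (from $F \cap O_y = \emptyset$), this forces $F \subseteq \alpha(y)$ or $F \subseteq \omega(y)$, upgraded to a strict containment by non-separatricity. Without loss of generality $F \subsetneq \alpha(y)$. Lemma \ref{01} gives that $\alpha(y)$ is connected, which makes $\alpha(y) \cap (M \setminus F) \neq \emptyset$ (otherwise $F$ would be a proper clopen subset of the connected $\alpha(y)$).

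The delicate step is to pick $z \in \alpha(y) \cap (M \setminus F)$ with $O_z \neq O_y$. Here the saddle-set hypothesis enters essentially: the nonemptiness of $\overline{S_U} \cap F$ furnishes orbits escaping $U$ that accumulate on $F$, and together with the cardinality mismatch this rules out the pathological degenerate case where $\alpha(y) \cap (M \setminus F) = O_y$. In that degenerate case, reasoning via minimality of $F$, invariance, and nonemptiness of $\omega$-limit sets in the sequentially compact $X$, $O_y$ would be forced to be a closed orbit contained in the minimal set $F$, contradicting $y \notin F$; if necessary, the roles of $\alpha(y)$ and $\omega(y)$ can be interchanged. I stress that only $O_z \neq O_y$ is needed, not strict $O_z < O_y$, because the definition uses $(O_0, O'] \setminus \{O'\}$ rather than the strict open interval $(O_0, O')$, so the equivalence $\overline{O_z} = \overline{O_y}$ with $O_z \neq O_y$ as orbits is perfectly admissible.

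Finally, setting $O' := O_y$, I will have $O_z \in (O_0, O'] \setminus \{O'\}$. Since $F \subseteq \overline{O_z}$, every point of $O_0 \subseteq F$ lies in $\overline{O_z} \subseteq \overline{\bigcup\{O'' : O_0 < O'' \leq O',\ O'' \neq O'\}}$, and because the quotient $\pi: X \to X/v$ preserves closure of saturated subsets, $O_0 \in \overline{(O_0, O'] \setminus \{O'\}}$ in $X/v$, establishing the second clause of weakly $\tau_v$-saddle-like. The main obstacle will be the third-paragraph degenerate-case analysis, where the saddle hypothesis must be combined with the separatrix/orbit cardinality mismatch to prevent the orbit closures near $F$ from collapsing into configurations of the form $F \cup O_y$ with no usable intermediate orbit.
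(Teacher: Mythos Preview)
Your overall strategy is the direct counterpart of the paper's contradiction argument, and your first two paragraphs are sound. The paper instead assumes $O_x$ is \emph{not} weakly $\tau_v$-saddle-like, so that $\mathop{\uparrow}_{\tau_v} O_x$ is open and $(O_x,O_y]\setminus\{O_y\}=\emptyset$ for \emph{every} $y>x$. From this it deduces that each such $O_y$ is proper with $\overline{O_y}\setminus O_y\subseteq F\sqcup(X\setminus\mathop{\uparrow}_{\tau_v}O_x)$, and then connectivity of $\alpha(y)$ and $\omega(y)$ (Lemma~\ref{01}) forces one of them to equal $F$. Hence every orbit in $\mathop{\uparrow}_{\tau_v}O_x\setminus F$ is a separatrix, and the cardinality hypothesis finishes.

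Your third paragraph, however, is where the proposal breaks. You must exclude $\alpha(y)\cap(M\setminus F)\subseteq O_y$ for your chosen non-separatrix $y$, which amounts to ruling out that $y\in\alpha(y)$ while $\overline{O_y}\cap(M\setminus F)=O_y$. The sketch you give (``$O_y$ would be forced to be a closed orbit contained in $F$'') does not follow from minimality, invariance, and nonemptiness of $\omega$-limit sets; in that scenario $O_y$ is merely locally closed, not closed, and nothing listed forces it into $F$. Your appeal to the saddle condition via $\overline{S_U}\cap F\neq\emptyset$ is also disconnected: the existence of orbits that escape $U$ in both time directions and accumulate on $F$ says nothing about whether $\alpha(y)$ for \emph{your particular} $y$ contains an intermediate orbit distinct from $O_y$. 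Indeed, the paper's proof never invokes that clause of the saddle definition at all---only the minimality of $F$ is used. The clean repair is to abandon the attempt to make a single $y$ work and instead adopt the paper's framing: show that whenever $(O_x,O_y]\setminus\{O_y\}=\emptyset$ the orbit $O_y$ must be a separatrix (this is exactly the content of your second paragraph, run for an arbitrary $y$ rather than a pre-selected one), and then count.
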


\begin{proof} 
Let $v$ be a flow on a sequentially compact space $X$ and $F$ a saddle set with countably many separatrices. 
%Fix an open \nbd $U$ of $F$.
% and a point $x \in F$ such that $x \in \overline{U(F)}$, where $U(F) := \{ y \in {U} - F \mid  O^{+}(y) \not\subseteq {U}, O^{-}(y) \not\subseteq {U}\}$. 
Since $F$ is a saddle set, we obtain $\mathop{\Uparrow}_{\tau_v} O_x \neq \emptyset$ and $F = \overline{O_x}$. 
%Let $V := U \cap \mathop{\uparrow}_{\tau_v} O_x$. 
Suppose that $x$ is not weakly $\tau_v$-saddle-like. 
Then $\mathop{\uparrow}_{\tau_v} O_x$ is an open \nbd of $F$ and 
$x \notin  \overline{(\mathop{\Uparrow}_{\tau_v} O_x \cap \mathop{\downarrow}_{\tau_v} O_y) - O_y}$ for any point $y > x$. 
%the derived set $\mathop{\Downarrow}_{\tau_v} O_y = \overline{O_y} - O_y$ for any point $y >_{\tau_v} x$ consists of at least two minimal sets. 
%
Fix a point $y \in V -F$ with $x < y$. 
Then $\emptyset  = \mathop{\uparrow}_{\tau_v} O_x \cap ((\mathop{\Uparrow}_{\tau_v} O_x \cap \mathop{\downarrow}_{\tau_v} O_y) - O_y) 
= (\mathop{\Uparrow}_{\tau_v} O_x \cap \mathop{\downarrow}_{\tau_v} O_y) - O_y$ and so $\mathop{\Uparrow}_{\tau_v} O_x \cap \overline{O_y} =  O_y$. 
Since $\mathop{\Uparrow}_{\tau_v} O_x$ is an open \nbd of $O_y$, 
the orbit $O_y$ is proper and so $F \subseteq \overline{O_y} - O_y = \alpha(y) \cup \omega(y) \subseteq X - \mathop{\Uparrow}_{\tau_v} O_x 
%= \overline{O_x} \sqcup (X - \mathop{\uparrow}_{\tau_v} O_x) 
= F \sqcup (X - \mathop{\uparrow}_{\tau_v} O_x)$. 
Therefore either $\alpha (y) \subseteq F$ or $\omega (y) \subseteq F$. 
Since $\mathop{\uparrow}_{\tau_v} O_x$ intersects uncountably many orbits, 
we have uncountably many separatrices of $F$, 
which contradicts to the countability of separatrices.
\end{proof}

Summarize the relations between topological properties and dynamical properties. 

\begin{theorem}\label{prop:02}
Let $x$ be a point of a sequentially compact space $X$. 
The following holds: 
\\
1) $O_x$ is $\tau_v$-recurrent  $\Leftrightarrow$ $O_x$ is $v$-recurrent. 
\\
2) ${O}_x $ is non-indifferent $\Leftarrow$ $\overline{O_x}$ is an attractor.  
\\ 
3) $O_x$ is weakly $\tau_v$-saddle-like $\Leftarrow$ $\overline{O_x}$ is a saddle set with countably many separatrices and each \nbd of it intersects uncountably many orbits. 
\end{theorem}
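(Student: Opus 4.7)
The statement is essentially a summary theorem that collects the three preceding results into a single table of correspondences, so the plan is to prove each item by citing and lightly adapting the earlier work; there is very little new content.

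The plan is to observe that part (1) is exactly the content of Lemma \ref{lem:23}: that lemma establishes both implications, with the reverse direction requiring a Baire-category argument on $\overline{O_x}$. The hypothesis of sequential compactness is used in the same way that local compactness Hausdorff was used there (to ensure that $\overline{O_x}$ is a Baire space and hence that if $\overline{O_x} - O_x$ were closed, the collection $\{U_n\}$ of recurrence-witness open dense subsets of $O_x$ would intersect nontrivially). So for (1) I would simply write ``This is Lemma \ref{lem:23}'' after verifying the sequential compactness version of the Baire step goes through unchanged.

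For part (2), the plan is to apply Proposition \ref{lem:24} directly: if $\overline{O_x}$ is an attractor (and is a proper subset of $X$, which is implicit in having a basin strictly larger than the minimal set), the proposition exhibits the open saturation $\mathop{\uparrow}_{\tau_v} O_x = V$, shows the closed class $\hat{O_x} = F$, and shows every orbit $O_z$ with $z \in V - F$ is $T_D$, giving $\mathop{\Uparrow}_{\tau_v} O_x \neq \emptyset$ consisting of $T_D$ orbits. These are exactly the clauses in the definition of non-indifferent. Part (3) in the same way is an immediate restatement of Lemma \ref{25}, where the saddle set $F = \overline{O_x}$ with countably many separatrices and uncountable orbit count in every neighborhood produces the required witness $y > x$ with $x \in \overline{(x,y] - \{y\}}$.

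Since every piece is already proved above, the only real ``writing'' obstacle is stylistic: making clear that the theorem is a consolidation, not a new result, and flagging the directionality of each arrow (the $\Leftarrow$ in (2) and (3), the $\Leftrightarrow$ in (1)). The one technical point I would double-check before submitting is that the hypothesis ``sequentially compact'' is sufficient throughout, especially for the converse in (1): locally compact Hausdorff (as in Lemma \ref{lem:23}) and sequentially compact are not comparable in general, but compact Hausdorff spaces (which the theorem is really aimed at) satisfy both, and the Baire-category step used only that $\overline{O_x}$ is Baire and every open subspace of a Baire space is Baire. So I would either add a short remark confirming the Baire property holds under sequential compactness in the cases of interest, or strengthen the hypothesis of the theorem to match Lemma \ref{lem:23} exactly. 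Either way the proof is a three-line deduction.
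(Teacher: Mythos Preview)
Your proposal is correct and matches the paper's treatment: Theorem \ref{prop:02} is stated there as a summary (``Summarize the relations between topological properties and dynamical properties'') with no separate proof, and the three parts are indeed immediate from Lemma \ref{lem:23}, Proposition \ref{lem:24}, and Lemma \ref{25} respectively. Your flag on part (1) is well taken: the converse in Lemma \ref{lem:23} is proved under ``locally compact Hausdorff,'' while the theorem assumes ``sequentially compact,'' and the paper does not reconcile these; your suggestion to either verify the Baire step under the stated hypothesis or align the hypotheses is the right call.
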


Now we state a relation between  $\tau_v$-recurrence and hyperbolic-like property. 

\begin{proposition}\label{lem:26} 
Let $v$ be a flow on a topological space $X$. 
If $X$ is $\tau_v$-recurrent, then $X$ has no hyperbolic-like points. 
Conversely, 
if $X$ is compact and has no weakly hyperbolic-like minimal points, then $X$ is $\tau_v$-recurrent. 
\end{proposition}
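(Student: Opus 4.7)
The proposition splits into a forward direct implication and a contrapositive converse; the forward direction is immediate from the definitions, while the converse uses compactness and the interplay between open sets and the specialization pre-order.

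For the forward direction, suppose $X$ is $\tau_v$-recurrent and assume for contradiction that $x\in X$ is hyperbolic-like. By the definition of non-indifferent or saddle-like, both cases force $\mathop{\Uparrow}\hat{x}$ to contain a $T_D$ point $y$. From $y\in\mathop{\uparrow} x - \hat{x}$ we get $x<y$ strictly, so $x\in\mathop{\Downarrow}\hat{y}\neq\emptyset$ and $y$ is not minimal. Hence $y$ is $T_D$ and non-minimal, i.e., non-recurrent, contradicting the recurrence of $X$.

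For the converse I argue contrapositively. Given a non-recurrent $x$ (non-minimal and $T_D$), first note $|\hat{x}|=1$: otherwise any class-mate $y\neq x$ of $x$ would lie in $\mathop{\Downarrow} x$ with $\overline{y}=\overline{x}\ni x$, forcing $x\in\overline{\mathop{\Downarrow} x}$ and contradicting $T_D$-ness. Then $\mathop{\Downarrow} x=\overline{x}-\{x\}$ is a non-empty closed subset of the compact $X$; a standard Zorn/compactness argument gives a minimal point $y\in\mathop{\Downarrow} x$ (its class $\hat{y}$ being a minimal non-empty closed subset), and $y<x$ strictly. To show $y$ is weakly hyperbolic-like, split on whether $\mathop{\uparrow} y$ is open. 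If not, $y$ is already weakly saddle-like. If yes, use the key observation that open sets are up-sets in the specialization pre-order, so every open neighborhood of $y$ contains the entire upset $\mathop{\uparrow} y$; consequently any nonempty $A\subseteq\mathop{\uparrow} y$ satisfies $y\in\overline{A}$. Applied to $A=(y,w]-\{w\}\subseteq\mathop{\Uparrow}\hat{y}$, this reduces the second clause of weak saddle-likeness to the purely combinatorial condition that some $w>y$ have $(y,w]-\{w\}\neq\emptyset$. Trying $w=x$: if that interval is nonempty, we are done; otherwise $\mathop{\Downarrow} x\cap\mathop{\uparrow} y=\hat{y}$, and if every $w\in\mathop{\Uparrow}\hat{y}$ is then $T_D$, we conclude $y$ is weakly non-indifferent. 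The remaining subcase--some $w\in\mathop{\Uparrow}\hat{y}$ is non-$T_D$, hence $v$-recurrent--is handled by the flow structure: via the connectedness of $\omega$-limit sets on a compact space (the companion of the connectedness lemma for $\alpha$-limits established earlier), $\overline{O_w}$ must strictly contain $O_w\cup M$, where $M$ is the minimal set for $y$; this yields an orbit $O'\in\overline{O_w}-(O_w\cup M)$ with $M\subseteq\overline{O'}\subsetneq\overline{O_w}$, giving an element of $(y,w]-\{w\}$.

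The main obstacle is this last case: ruling out, using the flow structure, the pathological configuration in which $\mathop{\uparrow} y$ is open, some $w\in\mathop{\Uparrow}\hat{y}$ is flow-recurrent, and yet no orbit lies strictly between $y$ and $w$ in the $\tau_v$ pre-order. The connectedness of limit sets on a compact space is precisely what excludes this--without the flow hypothesis a purely topological analogue of the statement can fail, so the passage from $\tau_v$-properties back to the underlying flow structure is essential and constitutes the delicate step.
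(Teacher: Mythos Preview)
Your forward direction is correct and matches the paper's one-line argument: a hyperbolic-like point forces a non-minimal $T_D$ point above it, which is non-recurrent.

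For the converse your architecture also matches the paper---descend from a non-recurrent orbit to a minimal orbit $y$ below it, then split on whether $\mathop{\uparrow} y$ is open and whether every element of $\mathop{\Uparrow}\hat y$ is $T_D$. The problem is your handling of the final subcase (some $w\in\mathop{\Uparrow}\hat y$ non-$T_D$), and your diagnosis of what makes it hard.

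You invoke connectedness of limit sets to obtain an orbit $O'\in\overline{O_w}\setminus(O_w\cup M)$ with $M\subseteq\overline{O'}$. The strict containment $\overline{O_w}\supsetneq O_w\cup M$ already follows from non-$T_D$-ness alone (otherwise $\overline{O_w}-O_w=M$ would be closed), so connectedness is idle there; and the crucial assertion $M\subseteq\overline{O'}$ is unsupported. An arbitrary orbit inside $\overline{O_w}$ need not have $M$ in its closure---$\overline{O_w}$ may well contain other minimal sets---so you have not placed $O'$ in $(y,w]$. Your closing claim that the flow structure is essential at this point is therefore exactly backwards.

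The paper disposes of this subcase by a purely order-theoretic step in $\tau_v$. Since $\mathop{\uparrow} y$ is open (by the standing assumption) and $\hat y$ is closed (it is a minimal set), the set $\mathop{\Uparrow}\hat y=\mathop{\uparrow} y\setminus\hat y$ is itself an \emph{open} neighborhood of $O_w$. Non-$T_D$-ness of $O_w$ then gives an orbit $z\in(\overline{O_w}-O_w)\cap\mathop{\Uparrow}\hat y$; any such $z$ satisfies $y<z\leq w$ with $z\neq w$, so $(y,w]-\{w\}\neq\emptyset$ and $y$ is weakly saddle-like. No appeal to limit-set connectedness, and no flow-specific fact beyond compactness (used only to produce the minimal orbit $y$), is needed. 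Incidentally, this also makes your preliminary reduction to $|\hat x|=1$ and the trial with $w=x$ unnecessary.
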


\begin{proof}
Suppose that there is a hyperbolic-like point $x$ of the space $X$. 
Then there is an orbit $O_y >_{\tau_v} O_x$ which is $\tau_v$-$T_D$. 
Obviously $O_y$ is not $\tau_v$-recurrent. 
Conversely, suppose that $X$ is compact but not $\tau_v$-recurrent.  
Then there is a non-closed proper orbit $O$. 
Since $X$ is compact, there is an orbit $O' \subsetneq \overline{O}$ contained in a minimal set. 
It suffices to show that there is a weakly hyperbolic-like minimal point. 
Indeed, we may assume that there are no weakly non-indifferent orbits which are minimal 
and each upset of minimal point is $\tau_v$-open. 
Then there is a non-$T_D$-orbit $O'' >_{\tau_v} O'$. 
Since $\mathop{\Uparrow}_{\tau_v} O'$ is an open \nbd of $O''$, 
the non-$T_D$ property implies $\overline{O''} \cap \mathop{\Uparrow}_{\tau_v} O' \neq O''$ and so $(\mathop{\Uparrow}_{\tau_v} O_x \cap \overline{O_y}) -  O_y \neq \emptyset$. 
This implies that $O'$ is a weakly $\tau_v$-saddle-like.  
\end{proof}

The converses of the above statements do not hold. 
In fact, the trivial topology on a two point set is compact recurrent and there are weakly non-indifferent points. 
The one point compactification $X$ of an infinite discrete space $Y$ with the lower cofinite topology (i.e. the closed sets of except $X$ correspond to finite subsets of $Y$) is also $T_0$ recurrent and there are weakly saddle-like points. 
Moreover, a four point space $X = \{a, b, c, d\}$ with a topology $\{ \emptyset, \{ c \}, \{a, b\}, \{a, b, c\}, X\}$ is not recurrent but compact and has no hyperbolic-like points. 

\subsection{Weak hyperbolic-like property for vector fields}

On the hyperbolic minimal set of a vector field, we have the following statement. 

\begin{proposition}\label{prop:d} 
Each hyperbolic minimal set of a $C^1$ vector field $v$ on a compact manifold consists of weakly $\tau_v$-hyperbolic-like points. 
\end{proposition}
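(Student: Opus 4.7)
The strategy is to split into three cases according to whether the hyperbolic minimal set $M$ is an attractor, a repellor, or of saddle type, where saddle type means that both the stable bundle $E^s$ and the unstable bundle $E^u$ of the hyperbolic splitting are non-trivial. The attractor case is immediate from Proposition \ref{lem:24}: every orbit contained in an attractor $M \subsetneq X$ is non-indifferent with respect to $\tau_v$, hence a fortiori weakly non-indifferent and weakly $\tau_v$-hyperbolic-like. For the repellor case I would apply Proposition \ref{lem:24} to the reversed flow $-v$; since $v$ and $-v$ share the same orbit decomposition and therefore the same specialization preorder, $\tau_v = \tau_{-v}$ and the conclusion transfers.

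The substantive case is when $M$ is of saddle type, so that $W^s(M) \setminus M$ and $W^u(M) \setminus M$ are both non-empty. Here I would prove that $\mathop{\uparrow}_{\tau_v} O_x$ is not open for every $x \in M$, which by definition yields weakly $\tau_v$-saddle-like. Note that Lemma \ref{25} does not apply directly, since a saddle hyperbolic minimal set generically has uncountably many separatrices. So the plan is to exhibit, in every neighborhood of $x$ in $X$, a point $z \notin W^s(M) \cup W^u(M) \cup M$ whose orbit closure $\overline{O_z}$ does not contain $M$; such $z$ gives $O_z \notin \mathop{\uparrow}_{\tau_v} O_x$ arbitrarily close to $x$, showing that the saturated preimage of $\mathop{\uparrow}_{\tau_v} O_x$ in $X$ is not a neighborhood of $x$, and hence that $\mathop{\uparrow}_{\tau_v} O_x$ is not open.

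To construct such $z$, I would invoke the stable manifold theorem for hyperbolic invariant sets of $C^1$ flows on compact manifolds, which provides a local product neighborhood of $M$ in which $W^s_{\mathrm{loc}}(M) \cup W^u_{\mathrm{loc}}(M)$ is contained in a proper submanifold of positive codimension; in particular every neighborhood of $x$ contains points off this submanifold. The main obstacle is to verify that such a $z$ genuinely satisfies $M \not\subseteq \overline{O_z}$, since a priori the global flow might recur the orbit of $z$ back into $M$. I would rule this out as follows: writing $\overline{O_z} = O_z \cup \omega(z) \cup \alpha(z)$ and noting $O_z \cap M = \emptyset$, the assumption $M \subseteq \overline{O_z}$ would force $\omega(z) \cap M \neq \emptyset$ or $\alpha(z) \cap M \neq \emptyset$; since $\omega(z)$ and $\alpha(z)$ are closed and flow-invariant, minimality of $M$ then yields $M \subseteq \omega(z)$ or $M \subseteq \alpha(z)$. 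Combining with the local hyperbolic structure---after shrinking the tubular neighborhood so that orbits staying near $M$ for all sufficiently large positive (respectively negative) times must lie in $W^s(M)$ (respectively $W^u(M)$)---one concludes $z \in W^s(M) \cup W^u(M)$, contradicting the choice of $z$. Rigorously deducing membership in a global stable or unstable manifold from the mere accumulation condition $M \subseteq \omega(z)$ is the principal technical hurdle, and is where the full strength of $C^1$ hyperbolicity enters.
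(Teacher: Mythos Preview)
Your approach is essentially the paper's: split into attracting/repelling versus saddle type, handle the former via non-indifference (the paper argues this directly from the stable/unstable manifold being an open neighbourhood rather than citing Proposition~\ref{lem:24}, but it is the same content), and in the saddle case show $\mathop{\uparrow}_{\tau_v} O_x$ is not open by arguing that the kernel lies in $W^s(\mathcal{M}) \cup W^u(\mathcal{M})$, which has empty interior because the local (un)stable laminations have positive codimension. The ``principal technical hurdle'' you isolate---passing from $\mathcal{M} \subseteq \overline{O_z}$ to $z \in W^s(\mathcal{M}) \cup W^u(\mathcal{M})$---is precisely the containment the paper asserts in one line without justification, so your outline is if anything more explicit than the published argument about where the hyperbolic theory enters.
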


\begin{proof} 
Let $\mathcal{M}$ be a hyperbolic minimal set. 
Suppose that $\mathcal{M}$ is attracting or repelling.  
Then the weakly either stable or unstable manifold of it is an open \nbd of it. 
This means each orbit contained in $\mathcal{M}$ is non-indifferent. 
Suppose that $\mathcal{M}$ is neither attracting nor repelling.  
Then a local weakly (un)stable manifold of $\mathcal{M}$ is nowhere dense 
and so the weakly (un)stable manifold $W^{\sigma}(\mathcal{M})$ of  $\mathcal{M}$ 
has empty interior ($\sigma =u, s$). 
Then the $\tau_v$-kernel of any orbit contained in $\mathcal{M}$ is 
contained in $W^s(\mathcal{M}) \cup W^u(\mathcal{M})$ and so is not open. 
%We show that $\overline{O(x)} - O(x)$ for any point $x \notin W^s(\mathcal{M}) \cup W^u(\mathcal{M})$ whose orbit closure contains $\mathcal{M}$ does not consist of minimal sets. 
%Indeed, otherwise fix such $x$. 
%Then we may assume that $\mathcal{M} \subseteq \omega (x)$. 
%Since $x$ does not belong to $W^s(\mathcal{M})$, 
%there is a small \nbd $U$ of  $\mathcal{M}$ such that $O(x)$ intersects the boundary $\partial U$ of $U$ infinitely many times. 
%The hyperbolicity implies that $\overline{\partial U \cap O(x)}$ contains a point of $W^s(\mathcal{M}) - \mathcal{M}$. 
%%This implies the claim. 
%%
%Thus the derived set $\overline{O(x)} - O(x)$ of any point $x \notin W^s(\mathcal{M}) \cup W^u(\mathcal{M})$ whose orbit closure contains $\mathcal{M}$ 
%contains  points out of minimal sets. 
Then each orbit contained in $\mathcal{M}$ is weakly  $\tau_v$-saddle-like. 
\end{proof}

\section{Applications for foliations}

Lemma \ref{lem:00} implies the following observation. 

\begin{lemma}
A foliation on a compact manifold is $T_{-1}$ \iff each minimal set is a compact leaf. 
\end{lemma}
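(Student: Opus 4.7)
The plan is to translate the statement into the saturated topology $\tau_{\F}$ on the leaf space $M/\F$ (which, by Proposition \ref{cor:02}, is a genuine topology) and then invoke Lemma \ref{lem:00}. The points of $(M/\F, \tau_{\F})$ are leaves, and the specialization preorder is $L \leq_{\F} L'$ iff $\overline{L} \subseteq \overline{L'}$. A $\tau_{\F}$-closed point is a leaf $L$ with $\{L\} = \overline{L}$ in $M$, i.e.\ a closed (hence, on a compact manifold, compact) leaf.

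The first step is to identify the minimal points of $(M/\F, \leq_{\F})$ with leaves whose closures are minimal sets of $\F$. Indeed, if $L$ is minimal in the preorder, then any leaf $L' \subseteq \overline{L}$ satisfies $\overline{L'} \subseteq \overline{L}$, hence $\overline{L'} = \overline{L}$, and since $M$ is compact every nonempty closed saturated subset of $\overline{L}$ contains a minimal set, forcing $\overline{L}$ itself to be minimal. Conversely, if $\overline{L}$ is a minimal set of $\F$, then any $L' \leq_{\F} L$ has $\overline{L'}$ a nonempty closed saturated subset of $\overline{L}$, so $\overline{L'} = \overline{L}$, whence $L \leq_{\F} L'$.

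The second step is the direct application of Lemma \ref{lem:00}: $\tau_{\F}$ is $T_{-1}$ iff for every minimal leaf $L$, the class $\hat{L}$ is $\tau_{\F}$-closed as a point of $M/\F$, equivalently, $\{L\}$ is a singleton class and $L$ is closed in $M$. Combined with step one, this says: for every leaf $L$ with $\overline{L}$ a minimal set, $\overline{L}$ consists of the single leaf $L$ and $L = \overline{L}$ is closed in $M$ — equivalently, $\overline{L}$ is a compact leaf. Since every minimal set arises as $\overline{L}$ for some leaf $L$ contained in it, this is precisely the statement that each minimal set of $\F$ is a compact leaf. The converse direction is immediate: if every minimal set is a compact leaf $L$, then the minimal leaves in the preorder are exactly these closed leaves and their classes $\hat{L} = \{L\}$ are closed, so Lemma \ref{lem:00} yields $T_{-1}$.

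The main obstacle is the bookkeeping in step one, namely showing that minimality in the preorder is equivalent to $\overline{L}$ being a minimal set; this requires compactness of $M$ to guarantee the existence of minimal sets inside $\overline{L}$ (via Zorn's lemma on nonempty closed saturated subsets). Once that correspondence is in place, the rest is a direct translation of Lemma \ref{lem:00}.
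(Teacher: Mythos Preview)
Your proposal is correct and follows exactly the approach the paper indicates: the paper states only that ``Lemma \ref{lem:00} implies the following observation'' and gives no further argument, so your elaboration---identifying the $\leq_{\F}$-minimal points with leaves lying in minimal sets and then applying Lemma \ref{lem:00}---is precisely the intended proof. One minor remark: in Step~1 the forward implication does not actually require compactness (any leaf $L'\subseteq\overline{L}$ already has $\overline{L'}=\overline{L}$ by preorder-minimality, so $\overline{L}$ has no proper nonempty closed saturated subset); compactness of $M$ is used only to translate ``closed leaf'' into ``compact leaf''.
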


The recurrence for foliations implies the $C_0$ separation axiom as follows.

\begin{theorem} 
The following are equivalent for 
a foliation $\F$ on a compact manifold: 
\\
1) 
$\F$  is recurrent.  
\\
2) 
$\mathrm{P} = \emptyset$.  
\\
3) 
$\F$ is $C_0$.  
\end{theorem}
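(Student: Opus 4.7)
The plan is to establish the cycle $(1) \Leftrightarrow (2) \Leftrightarrow (3)$, leveraging the order-theoretic characterizations of recurrence and $C_0$ developed in Section~4.

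For $(1) \Leftrightarrow (2)$, I would unfold the definition of $\F$-recurrence through the saturated topology $\tau_{\F}$. By the pointwise characterization (a point is recurrent iff it is $\tau$-closed or its derived set is not $\tau$-closed), a leaf $L$ is $\tau_{\F}$-recurrent iff $L$ is a closed leaf in $M$ or the set $\mathop{\Downarrow}_{\F}L$, whose $\F$-saturation is $\overline{L}\setminus L$, is not closed. The latter is exactly the failure of $L$ to be proper. Hence $\F$ is recurrent iff every leaf is either closed or non-proper, iff no leaf belongs to $\mathrm{P}$.

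For $(3) \Rightarrow (2)$, I argue by contrapositive. Suppose $L \in \mathrm{P}$, i.e., $L$ is proper and non-closed. Then $\overline{L}\setminus L$ is a nonempty closed saturated set consisting of leaves strictly below $L$ in $\leq_{\F}$, so $L$ is not pre-order minimal. Moreover, properness of $L$ forces $L$ to be open in $\overline{L}$; any leaf $L' \neq L$ with $\overline{L'} = \overline{L}$ would then sit in the closed set $\overline{L}\setminus L$, giving the contradiction $\overline{L'} \subseteq \overline{L}\setminus L \subsetneq \overline{L}$. Thus $|\hat{L}| = 1$, and by Proposition~\ref{lem:01b}(1) $L$ is not $C_0$, so $\F$ is not $C_0$.

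For $(2) \Rightarrow (3)$, assume $\mathrm{P} = \emptyset$, so every leaf is closed or non-proper. A closed leaf $L$ satisfies $\overline{L} = L$, hence is pre-order minimal, hence $C_0$ by Proposition~\ref{lem:01b}(1). For a non-proper leaf $L$ on the compact manifold $M$, I would invoke the foliation analog of Corollary 2.3 of \cite{Y2} used earlier for flows: non-properness of $L$ produces a sequence in distinct leaves of $\overline{L}\setminus L$ accumulating on a point of $L$, and compactness of $M$ together with the standard plaque/holonomy machinery yields another leaf $L' \neq L$ with $\overline{L'} = \overline{L}$, giving $|\hat{L}| > 1$ and hence $C_0$. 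This last step --- producing a second leaf with the same closure from non-properness on a compact manifold --- is the only genuinely foliation-theoretic input and is the main obstacle; all remaining work is formal bookkeeping against Proposition~\ref{lem:01b} and the characterization of recurrence.
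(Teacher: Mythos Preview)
Your argument is correct and matches the paper's proof almost exactly: the equivalence $(1)\Leftrightarrow(2)$ is definitional, the implication $(3)\Rightarrow(2)$ is the contrapositive form of Lemma~\ref{lem410} (your direct verification that a proper non-closed leaf has $|\hat L|=1$ is precisely the content of $C_0\Rightarrow$ recurrent unpacked via Proposition~\ref{lem:01b}(1)), and for $(2)\Rightarrow(3)$ the paper invokes the same external input from \cite{Y2} that non-proper leaves satisfy $|\hat L|>1$. The only cosmetic discrepancy is that the paper cites Theorem~2.1 of \cite{Y2} for the foliation statement rather than the flow version Corollary~2.3 you mention, but you have correctly isolated this as the one nontrivial ingredient.
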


\begin{proof}
Lemma \ref{lem410} implies that 
a $C_0$ foliation on a compact manifold is recurrent. 
It suffices to show that 
recurrence implies the $C_0$ separation axiom. 
Suppose that $\F$  is recurrent. 
Then there are no proper non-compact leaves. 
Theorem 2.1 \cite{Y2} implies that 
%the leaf class of 
each non-proper leaf 
is not $T_0$. 
%consists of more than one point. 
Therefore 
each leaf is compact or non-$T_0$ 
and so minimal or non-$T_0$. 
Thus $\F$ is $C_0$.  
\end{proof}

By the same argument, Corollary 2.3 \cite{Y2} implies a similar statement for group-actions. 

\begin{corollary}\label{cor}
A group-action on a paracompact manifold is recurrent if and only if it is $C_0$. 
\end{corollary}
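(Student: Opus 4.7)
The plan is to mirror the proof of the preceding theorem, replacing the foliation input (Theorem 2.1 of \cite{Y2}) with its group-action analogue (Corollary 2.3 of \cite{Y2}), which is already cited in the excerpt in the context of orbit classes containing uncountably many points for non-closed recurrent orbits. Concretely, I would prove the two directions separately: the implication $C_0 \Rightarrow$ recurrent is entirely general and follows immediately from Lemma \ref{lem410}, applied pointwise on the orbit space $X/v$ with the quotient topology. No manifold structure is needed for this half.

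For the converse, suppose the group-action $v$ on the paracompact manifold $M$ is recurrent, i.e.\ the quotient topology on $M/v$ is recurrent. First I would translate recurrence into the statement that every orbit is either minimal (closed) or non-proper: since a point of $M/v$ is $\tau_v$-recurrent iff it is $\tau_v$-closed or $\tau_v$-non-$T_D$, and an orbit $O$ is proper iff $\mathop{\Downarrow}_{\tau_v} O$ is $\tau_v$-closed (as noted in the preliminaries), this step is just unfolding definitions. Next, for a non-proper orbit $O$, I would invoke Corollary 2.3 \cite{Y2} to conclude that the orbit class $\hat{O}$ contains (uncountably) many orbits, so in particular $|\hat{O}| > 1$. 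This is the analogue for group-actions of the step in the foliation theorem where Theorem 2.1 \cite{Y2} forces non-proper leaves to fail $T_0$. Finally, by Proposition \ref{lem:01b}(1), an orbit in $M/v$ is $C_0$ iff it is minimal or $|\hat{O}| > 1$, so every orbit is $C_0$ and hence the group-action is $C_0$.

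The hard part is essentially outsourced: the only nontrivial content is the implication ``non-proper orbit $\Rightarrow |\hat{O}| > 1$,'' and this is precisely what Corollary 2.3 \cite{Y2} supplies. Thus, modulo citing that result, the corollary reduces to a routine repackaging through the pre-order characterization of $C_0$ from Proposition \ref{lem:01b} and the general Lemma \ref{lem410}. The only subtlety worth checking is that paracompactness (rather than compactness, as in the foliation case) is indeed the hypothesis under which the cited Corollary 2.3 \cite{Y2} is stated for group-actions; if that citation requires a stronger or weaker hypothesis, the statement of the corollary should be adjusted to match.
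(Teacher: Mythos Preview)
Your proposal is correct and matches the paper's approach exactly: the paper does not give a separate proof for this corollary but simply remarks ``By the same argument, Corollary 2.3 \cite{Y2} implies a similar statement for group-actions,'' which is precisely the substitution you carry out. Your elaboration via Lemma \ref{lem410} and Proposition \ref{lem:01b}(1) faithfully unpacks the foliation argument, and your caveat about verifying that Corollary 2.3 \cite{Y2} is stated under paracompactness is appropriate due diligence.
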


The $S_{1/2}$ separation axiom implies the recurrence. 

\begin{lemma}\label{01a}
A $S_{1/2}$ decomposition $\F$ on a compact space $X$ is recurrent.  
\end{lemma}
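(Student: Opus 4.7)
The plan is to translate the $S_{1/2}$ hypothesis via Theorem \ref{cor}(3), then verify the recurrence condition classwise by a short case analysis on the height in the specialization preorder of $X/\F$.

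By definition, $\F$ being $S_{1/2}$ means the quotient $X/\F$ is $S_{1/2}$, which by Theorem \ref{cor}(3) is equivalent to $\mathop{\mathrm{ht}}_\tau(X/\F) \leq 1$ together with every height-one class being open in $X/\F$. Thus each $L \in \F$ is either minimal (height zero) or of height one with $\hat{L}$ open in $X/\F$. I intend to show that in every case the image of $L$ in $X/\F$ is recurrent, i.e., closed or non-$T_D$.

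The non-$T_0$ subcase $|\hat{L}| > 1$ (which can occur in either height stratum) is handled uniformly. Picking any $L' \in \hat{L}\setminus\{L\}$, one has $L' \in \mathop{\Downarrow} L$ while $L \in \overline{L'} \subseteq \overline{\mathop{\Downarrow} L}$, so $\mathop{\Downarrow} L$ is not closed, $L$ is non-$T_D$, and hence recurrent. The $T_0$ minimal case $\hat{L} = \{L\}$ is also immediate: then $\{L\}$ is closed and $L$ is recurrent as a closed point.

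The remaining case is the delicate one: $L$ has height one and $\hat{L} = \{L\}$, so that $\{L\}$ is open but not closed in $X/\F$. Here I would use the compactness of $X$: the closed subset $\F(\overline{L}) \subseteq X$ is compact, and it splits into the open saturation $L$ together with the closed saturation $\F(\mathop{\Downarrow} L) = \F(\overline{L})\setminus L$. A covering/density argument on this compact set, exploiting that $L$ fails to be clopen when $L$ is not minimal, is meant to force $L \subseteq \overline{\F(\mathop{\Downarrow} L)}$ in $X$, which then translates into $\mathop{\Downarrow} L$ failing to be closed in $X/\F$, giving non-$T_D$ and therefore recurrence. This last density step, where the compactness of $X$ is used nontrivially, is the main obstacle; the preceding cases are routine bookkeeping via Theorem \ref{cor}(3) together with the standard observation that non-$T_0$ points are automatically non-$T_D$.
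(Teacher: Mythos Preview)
Your case analysis via Theorem~\ref{cor}(3) is correct up to and including the non-$T_0$ and closed-point cases. The ``delicate case'' --- $\hat L = \{L\}$ open but not closed in $X/\F$ --- cannot, however, be completed by the compactness argument you sketch. Consider $X=\{a,b\}$ with the Sierpi\'nski topology ($\{a\}$ open, $\{b\}$ closed) and $\F$ the decomposition into singletons: this is a compact $T_{1/2}$ (hence $S_{1/2}$) space, yet $a$ is open, non-closed, and has closed derived set $\{b\}$, so $a$ is not recurrent. Your proposed density step $L \subseteq \overline{\F(\mathop{\Downarrow}L)}$ fails here for the simple reason that $L$ itself is open in $X$, so $\F(\mathop{\Downarrow}L) = \overline L \setminus L$ is already closed and cannot contain $L$; no amount of compactness changes that.

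The paper's proof does not try to handle this case by a general topological argument. Instead it observes that when $\hat L$ is open the element $L$ is \emph{locally dense} --- foliation terminology, and the lemma sits in the foliations section despite saying ``decomposition''. The operative structural fact is that a leaf has positive codimension, so a single leaf is never open in the ambient manifold; hence $\hat L$ open forces $|\hat L|>1$. Your delicate case is therefore vacuous in the intended setting and collapses into the non-$T_0$ case you already dispatched. What is missing from your plan is not a compactness trick but precisely this hypothesis on $\F$: that no single element is open in $X$. Without it the statement is false, as the Sierpi\'nski example shows.
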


\begin{proof}
Fix an element $L \in \F$. 
Then either $\overline{L}$ is a minimal set or $\hat{L}$ is open.  
If $\overline{L}$ is a minimal set, then $L$ is compact or $\overline{L} - L$ is not closed. 
Hence $L$ is recurrent. 
If $\hat{L}$ is open, then $L$ is locally dense and so $L$ is recurrent. 
\end{proof}

The converse does not hold in general. 
In fact,  consider a codimension one foliation $\F'$ on $\T^3$ with one compact leaf and dense leaves. 
Then a foliation $\F := \{ L \times \{ x \} \mid L \in \F', x \in \S^1 \}$ on $\T^4$ is not $S_{1/2}$ but recurrent codimension two. 
However, the converse is true for codimension one $S_{1/4}$ foliations. 
In other words, 
codimension one recurrent $S_{1/4}$ foliations on a compact manifold are $S_{1/2}$. 
We will show it in the subsection \ref{subsec01}. 

From now on, let $\F$ be a codimension one foliation on a paracompact manifold $M$.

\subsection{Leaf properties}
%\label{subsec01}

We state the characterization of proper (resp. compact, minimal) foliations as follows. 

\begin{theorem}
Let $\F$ be a codimension one foliation on a compact connected manifold $M$.  
The following statement hold: 
\\
1.  
$\F$ is proper \iff $\F$ is $T_D$ \iff $\F$ is $T_{0}$. 
\\
2.  
$\F$ is compact \iff $\F$ is $T_{1}$ \iff $\F$ is $T_{2}$.  
\\
3.  
$\F$ is minimal \iff $\F$ is not $T_{0}$ but $S_1$.  
\end{theorem}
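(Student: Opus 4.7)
The plan is to dispatch each of the three biconditional chains separately. Most single implications will follow from the preceding order-theoretic characterizations together with two codim-$1$ inputs: Theorem 2.1 of \cite{Y2} (every non-proper leaf has a non-trivial class, hence is not $T_0$) and the standard decomposition $M = \mathrm{Cl} \sqcup \mathrm{P} \sqcup \mathrm{LD} \sqcup \mathrm{E}$.

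For part 1, a leaf $L$ is proper iff $\overline{L} \setminus L$ is closed in $M$, which is exactly the $T_D$ condition for the point $L$ in the saturated leaf space $M/\F$, so \emph{proper} $\Leftrightarrow T_D$ unpacks definitionally. That $T_D \Rightarrow T_0$ is classical: if there were $L' \in \hat{L}$ with $L' \ne L$, then $L' \in \mathop{\Downarrow_{\F}} L$ would force $\overline{L'} = \overline{L} \subseteq \mathop{\Downarrow_{\F}} L$ and hence the absurd $L \in \mathop{\Downarrow_{\F}} L$. Finally $T_0 \Rightarrow$ \emph{proper} is the contrapositive of Theorem 2.1 of \cite{Y2}.

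For part 2, in the compact manifold $M$ a leaf is compact iff it is closed in $M$, and a closed leaf is proper with singleton class, so \emph{compact} $\Leftrightarrow T_1$ follows by noting that $\{L\}$ is closed in $M/\F$ iff $L$ is closed in $M$. The implication $T_2 \Rightarrow T_1$ is classical. For $T_1 \Rightarrow T_2$ I will invoke Reeb's local stability: a closed leaf of a codim-$1$ foliation has at most $\Z/2$ holonomy, so Reeb supplies a saturated (possibly twisted) product neighborhood of every leaf, making the leaf space into a Hausdorff $1$-dimensional orbifold.

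For part 3, the forward direction is immediate: if $\F$ is minimal then every leaf has closure $M$, so $\hat{L} = M$ is closed (hence $S_1$) and contains more than one leaf (hence not $T_0$). For the converse, $S_1$ forces every proper leaf to coincide with its singleton class and hence to be closed, so $\mathrm{P} = \emptyset$; the non-$T_0$ assumption then supplies some leaf $L \in \mathrm{LD} \cup \mathrm{E}$. In the locally dense case I will argue that $\hat{L}$ is both closed (by $S_1$) and a union of leaves each dense in $\overline{L}$, so $\hat{L} = \overline{L}$; by codim-$1$ structure theory any boundary leaf of $\overline{L}$ is either compact or proper non-closed, and both options are incompatible with $\hat{L} = \overline{L}$ under $S_1$ (a compact leaf lies in $\overline{L} = \hat{L}$ but has closure itself, violating the class condition; a proper non-closed leaf is directly forbidden by $S_1$), so $\partial \overline{L} = \emptyset$; connectedness of $M$ then gives $\overline{L} = M$, and since $\hat{L} = M$ every leaf has closure $M$, establishing minimality. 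The main obstacle is ruling out the exceptional case: here I will appeal to the codim-$1$ structure theory of exceptional minimal sets (Dippolito, Cantwell--Conlon) to assert that every complementary pocket of an exceptional minimal set contains a proper non-closed leaf spiralling onto it, which $S_1$ forbids, so $\mathrm{E} = \emptyset$ under the standing hypotheses.
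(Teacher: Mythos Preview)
Parts 1 and 2 are fine and track the paper's argument closely; the paper simply cites Theorem~2.1 of \cite{Y2} for part~1 and Corollary~5.3 of \cite{Y3} (the leaf space is an interval or a circle) for part~2, and your Reeb-stability route is the standard way to prove the latter.

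Part 3 is where your proposal diverges, and there are two problems. First, your assertion that ``any boundary leaf of $\overline{L}$ is either compact or proper non-closed'' is not what codimension-one structure theory gives: boundary leaves of a closed saturated set are only \emph{semi-proper}, and under your running hypothesis $\hat L=\overline L$ every leaf in $\partial\overline L$ actually lies in $\hat L$ and hence is locally dense. The conclusion $\partial\overline L=\emptyset$ is still reachable, but by a different one-liner: any $L''\subseteq\partial\overline L$ would satisfy $\overline{L''}=\overline L$ while $\partial\overline L$ is closed, forcing $\overline L\subseteq\partial\overline L$ and contradicting local density. Second, and more seriously, your treatment of the exceptional case rests on the claim that every complementary pocket of an exceptional minimal set contains a proper non-closed leaf. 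Dippolito's octopus decomposition and Cantwell--Conlon level theory do not say this; a pocket can be filled entirely with locally dense leaves accumulating on the exceptional set, with no proper leaf in sight. So as stated this step fails.

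The paper avoids all of this with a single order-theoretic observation you have available earlier (Salhi, Lemma~3 of \cite{S2}): the upset $\mathop{\uparrow}_{\F}L$ of an exceptional leaf is open, hence strictly contains $\hat L$, so \emph{an exceptional leaf is never maximal}. Since $S_1$ means every leaf class is closed, the height is zero and every leaf is simultaneously minimal and maximal; this kills $\mathrm E$ (not maximal) and $\mathrm P$ (not minimal) in one stroke, leaving $M=\mathrm{Cl}\sqcup\mathrm{LD}$. The non-$T_0$ hypothesis then forces a locally dense leaf $L$, whose class $\hat L=\overline L$ is clopen, and connectivity finishes. You should replace your pocket argument with this maximality obstruction.
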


\begin{proof} 
Theorem 2.1 \cite{Y2} implies the equivalence 1. 
Since the leaf space of a continuous codimension one compact foliation 
of a compact manifold is either a closed interval or a circle 
(Corollary 5.3 \cite{Y3}), 
%a compact orbifold \cite{E,EMS,E2,V,V2}, 
the equivalence 2 holds. 
Suppose that $\F$ is minimal. 
Obviously, $\F$ is not $T_{0}$ but $S_1$.  
Conversely, 
suppose that $\F$ is not $T_{0}$ but $S_1$.  
Then $\F$ consists of minimal sets 
and so $M = \min M = \max M$. 
Since exceptional leaves are not maximal, 
we obtain 
$M = \mathrm{Cl} \sqcup \mathrm{LD}$. 
Since $\F$ is not $T_{0}$, 
there is a locally dense leaf $L$. 
Since $\overline{L} = \hat{L}$, 
the connectivity of $M$ implies that 
$M = \hat{L}$. 
\end{proof} 

Recall that the superior structure $\mathop{\Downarrow_{\hat{\F}}}{L} = \cup \{ L' \in \F \mid L <_{\F} L' \}$ of a leaf $L$ is open,  and that the upset  $\mathop{\uparrow_{\F}}{L}= \cup \{ L' \in \F \mid L \geq_{\F} L' \}$ of an exceptional leaf $L$ is open connected (see  Lemma 2 and 3 \cite{S2}). 
These facts imply the following statements.

\begin{lemma}\label{lem:02a} 
Each locally dense leaf is maximal with respect to the order $<_{\F}$. 
\end{lemma}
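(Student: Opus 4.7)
The plan is to show that whenever $L \leq_{\F} L'$ with $L$ locally dense, one necessarily has $\overline{L} = \overline{L'}$, so $L' \in \hat{L}$ and $L$ admits no strict upper bound. The pivotal ingredient is Proposition \ref{cor:02}, which guarantees that for a foliation the closure of an $\F$-saturated subset is again $\F$-saturated; in particular $\overline{L}$ is a union of leaves, and any leaf that meets $\overline{L}$ is entirely contained in it.

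First I would exploit local density to produce a nonempty open set $V \subseteq \overline{L}$: since $L \in \mathrm{LD}$, the interior $\mathrm{int}(\overline{L})$ is nonempty, and I would take any $x \in \mathrm{int}(\overline{L})$ together with an open neighborhood $V$ of $x$ satisfying $V \subseteq \overline{L}$. The hypothesis $L \leq_{\F} L'$ gives $\overline{L} \subseteq \overline{L'}$, so $V \subseteq \overline{L'}$, and because $V$ is open and nonempty this forces $L' \cap V \neq \emptyset$. Pick any $p \in L' \cap V$; then $p \in L' \cap \overline{L}$.

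Now I would invoke the saturation property: since $\overline{L}$ is $\F$-saturated by Proposition \ref{cor:02}, the whole leaf $L_p = L'$ is contained in $\overline{L}$, whence $\overline{L'} \subseteq \overline{L}$. Combined with $\overline{L} \subseteq \overline{L'}$ this gives $\overline{L} = \overline{L'}$, i.e.\ $L =_{\F} L'$, proving that $L$ is maximal with respect to $<_{\F}$. I do not anticipate a real obstacle; the only delicate point is making sure the saturation statement of Proposition \ref{cor:02} is the relevant one (it is, since foliations are covered there), and that one can indeed choose the open set $V$ inside $\overline{L}$ rather than merely inside $M$, which is immediate from the definition of local density.
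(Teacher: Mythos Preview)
Your argument is correct and follows essentially the same route as the paper's proof, only spelled out in more detail: the paper's one-sentence proof (``the local density implies that $L' \subset \hat{L}$'') is exactly your observation that an open $V \subseteq \overline{L} \subseteq \overline{L'}$ must meet $L'$, together with the $\F$-saturation of $\overline{L}$ to conclude $L' \subseteq \overline{L}$. The reference to Proposition~\ref{cor:02} for the saturation of $\overline{L}$ is appropriate and the step $L' \cap V \neq \emptyset$ is justified since $L'$ is dense in $\overline{L'}$.
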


\begin{proof}
For a locally dense leaf $L$ and for a leaf $L'$ with $L \subset \overline{L'}$, 
the local density implies that $L' \subset \hat{L}$.  
\end{proof}

\begin{lemma}\label{lem:002} 
If the height of $\tau_{\F}$ is finite, then the union $\mathrm{LD}$ is open 
and the closure $\overline{\mathrm{E}}$ is a finite union of closures of exceptional leaves. 
\end{lemma}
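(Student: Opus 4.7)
The plan is to treat the two conclusions separately, with the finite height hypothesis entering primarily in the second part.

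For openness of $\mathrm{LD}$, I will show that for each locally dense leaf $L$ the class $\hat{L}$ coincides with the interior of $\overline{L}$. Because $\F$ has codimension one and $M$ is a manifold, the local product structure guarantees that the $\F$-saturation of any open set is open, so $\mathrm{int}\,\overline{L}$ is an $\F$-saturated open subset of $\overline{L}$. Any leaf $L' \subseteq \mathrm{int}\,\overline{L}$ is then itself locally dense (its closure contains $\mathrm{int}\,\overline{L}$), so by Lemma~\ref{lem:02a} it is maximal with respect to $\leq_\F$; combined with $L' \leq_\F L$ this forces $\overline{L'} = \overline{L}$ and hence $L' \subseteq \hat{L}$. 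The reverse inclusion $\hat{L} \subseteq \mathrm{int}\,\overline{L}$ follows from the codimension one local model: a leaf whose closure has nonempty interior lies in that interior. Thus each $\hat{L}$ is open, and $\mathrm{LD} = \bigcup \hat{L}$ is a union of open sets.

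For finiteness of $\overline{\mathrm{E}}$, it suffices to show that finite height implies only finitely many distinct exceptional minimal sets exist, since the closure of each exceptional leaf is such a minimal set, distinct exceptional minimal sets are pairwise disjoint, and all exceptional leaves inside one minimal set share the same closure. Suppose for contradiction that there are infinitely many distinct exceptional minimal sets $\{M_i\}_{i\in\mathbb{N}}$. Pick $p_i \in M_i$; since paracompactness of $M$ together with the codimension one local trivializations provides local compactness, we may pass to a convergent subsequence $p_{i_k} \to p$. Every neighborhood of the leaf $L_p$ then meets infinitely many $M_{i_k}$, so by minimality of each $M_{i_k}$ we obtain $M_{i_k} \subseteq \overline{L_p}$, i.e., every exceptional leaf $L \subseteq M_{i_k}$ lies strictly below $L_p$ in $\leq_\F$. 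Iterating this accumulation procedure among the $M_{i_k}$'s produces strict chains in $\leq_\F$ of arbitrarily large length, contradicting $\mathrm{ht}_{\tau_\F} M < \infty$. Hence $\mathrm{E} = M_1 \sqcup \cdots \sqcup M_k$ is already closed, so choosing $L_j \subseteq M_j$ yields $\overline{\mathrm{E}} = \overline{L_1} \cup \cdots \cup \overline{L_k}$.

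The main obstacle will be making the accumulation argument precise enough to obtain genuine strict increases in the specialization pre-order and to iterate them cleanly. The right tool is a transversal argument using a foliation chart at the accumulation point $p$: in codimension one, each exceptional minimal set meets a transversal $\Sigma$ in a Cantor-type subset, and distinct minimal sets give pairwise disjoint Cantors in $\Sigma$. An accumulation of such Cantors on a point of $\Sigma$ corresponding to $L_p$ then produces not only one layer of strict inequalities $L <_\F L_p$ but, by re-accumulating within each $M_{i_k}$ that is itself accumulated on by subsequently-indexed minimal sets, genuine chains of length exceeding any prescribed bound, which is what is needed to overrun the finite height.
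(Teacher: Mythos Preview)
Both halves of your plan have genuine gaps.

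For the openness of $\mathrm{LD}$, the key equality $\hat L=\mathrm{int}\,\overline{L}$ is false in general, even under the finite-height hypothesis. The paper itself gives a counterexample immediately after this lemma: a foliated bundle over $\Sigma_3$ whose leaf class space is $\{\hat L_\infty,\hat L_{cl},\hat L_{\Z}\}$, with $\hat L_{cl}$ locally dense and $\overline{\hat L_{cl}}$ equal to the whole manifold. Then $\mathrm{int}\,\overline{L_{cl}}=M$, yet the proper leaf $L_\Z$ lies in $M$ and is not locally dense, so $\mathrm{int}\,\overline{L_{cl}}\supsetneq\hat L_{cl}$. Your sub-claim ``its closure contains $\mathrm{int}\,\overline{L}$'' is exactly what fails: a leaf inside $\mathrm{int}\,\overline{L}$ can be proper there. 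Notice that your argument never uses the finite-height hypothesis, which is a warning sign; the paper's proof does use it, by iteratively removing from the open set $M-(\overline{\mathrm{E}}\cup\mathrm{Cl})$ the leaves that are closed in the current open set (Salhi's level theory), and finite height is precisely what makes this process terminate with $\mathrm{LD}$.

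For the finiteness of $\overline{\mathrm{E}}$, several steps do not go through. First, the closure of an exceptional leaf need not be a minimal set; the relevant objects are Salhi's exceptional \emph{local} minimal sets. Second, the accumulation direction is off: from $p_{i_k}\to p$ you get $p\in\overline{\bigcup_k M_{i_k}}$, not $M_{i_k}\subseteq\overline{L_p}$; meeting a neighbourhood of $L_p$ does not put $M_{i_k}$ inside $\overline{L_p}$. Third, even if you had $M_{i_k}\subseteq\overline{L_p}$ for many $k$, this produces a wide height-one fan below $L_p$, not a long chain, and you cannot iterate among minimal sets: distinct minimal sets are closed and pairwise disjoint, so none can accumulate on another. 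The paper does not attempt an elementary accumulation argument here; it invokes Salhi's theorem \cite{S} directly to conclude that under finite height $\mathrm{E}$ consists of finitely many local minimal sets, and then uses Lemma~\ref{lem:02a} only to see $\overline{\mathrm{E}}\cap\mathrm{LD}=\emptyset$.
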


\begin{proof}
Since the codimension of $\F$ is one, the union $\mathrm{Cl}$ is closed. 
Suppose that the height of $\tau_{\F}$ is $k < \infty$. 
Theorem \cite{S} implies that 
$\mathrm{E}$ consists of finitely many local minimal sets. 
Lemma \ref{lem:02a} implies $\overline{\mathrm{E}} \cap \mathrm{LD} = \emptyset$.  
Then the complement $M -(\overline{\mathrm{E}} \cup \mathrm{Cl}) \subseteq \mathrm{LD} \sqcup \mathrm{P}$ is open. 
Let $U_1 := M -(\overline{\mathrm{E}} \cup \mathrm{Cl}) \subseteq \mathrm{LD} \sqcup \mathrm{P}$. 
Then $\mathrm{LD} \subseteq U_1$.  
By Theorem \cite{S}, 
the union $C_1$ of the leaves each of which is closed in the open subset $U_1$ 
is closed in $U_1$. 
By induction, 
define an open subset $U_{i+1} := U_i - C_i$. 
Then there is no leaf which is closed in the open subset $U_k$ 
and so $U_k \cap \mathrm{P} = \emptyset$. 
Thus  the complement $U_k = U_1 \setminus \mathrm{P} = \mathrm{LD}$ is open.  
\end{proof}

Note that there is a non-$S_{1/4}$ non-wandering codimension one foliation without exceptional leaves on a compact manifold of height three such that $\mathrm{P}$ is nether closed nor open. Indeed, 
we construct a foliated bundle over $\Sigma_3$ with one compact leaf, one proper leaf, and locally dense leaves, where $\Sigma_3$ is a closed orientable surface with genus three. 
Let $\S^1 = \R \sqcup \{ \infty \}$ be the one-point compactification of $\R$. 
Define three homeomorphisms $f, g$, and $h$ which are pairwise commutative as follows: 
Consider $f, g$ as commutative translation on $(0, 1)$ whose orbits are dense 
and extend $f, g: \R \to \R$ into homeomorphisms such that $f(x + 1) = f(x) + 1$ and $g(x + 1) = g(x) + 1$. 
Define $h: \R \to \R$ by $h(x) = x + 1$. 
Extend $f, g$, and $h$ into homeomorphisms by adding common fixed point $\infty$. 
Then the resulting foliated bundle whose total holonomy group is generated by $f, g$, and $h$ is desired, where the total holonomy group is the image of the monodromy $\pi_1 (\Sigma_3) \to \mathrm{Diff}(\S^1)$. 
Indeed, $M /\hat{\F}$ consists of three points $\hat{L}_{\infty}, \hat{L}_{cl}, \hat{L}_{\Z}$ such that $\hat{L}_{\infty}$ is a closed point, $\hat{L}_{cl}$ is an open point with $\overline{\hat{L}_{cl}} = M /\hat{\F}$, and $\hat{L}_{\Z}$ is neither a closed point nor an open point. 

%first consider a product foliation $\F_0 := \{ \{x \} \times \S^1 \times S \mid x \in \S^1 \}$ 
%on a closed orientable manifold $\S^1 \times S$, where 
%$S$ is a closed orientable surface with genus three and 
%$\S^1 := \R \sqcup \{ \infty \}$ is the one point compactification of $\R$.  
%Let $\gamma_0, \gamma_1, \gamma_2$ be pairwise disjoint loops in $S$ 
%such that $S - \bigsqcup_{i = 0}^2 \gamma_i$ is a five punctured disk. 
%Cutting $\gamma_i$ and pasting the same pairs of boundaries 
%whose holonomy maps $f_0, f_1, f_2: \S^1 \to \S^1$ are homeomorphisms which 
%commute and satisfy the following properties: 
%1) The point $\infty$ at infinity is a global fixed point; 
%2) $f_0(x) := x + 1$; 
%3) $\Z \sqcup \{ \infty \}$ is the set of fixed points of $f_1$ (resp. $f_2$); 
%4) The orbit of any point $x \in (0,1)$ generated by $f_1$ and $f_2$ is dense in $(0,1)$
%(e.g. restrictions $f_1|_{(0,1)}$ and $f_2|_{(0,1)}$ are topologically conjugate to translations). 
%%
%Then the resulting foliation has one compact leaf which corresponds to $\infty$ and 
%one non-compact proper leaf which corresponds to $\Z$, 
%and dense leaves which correspond to $\R - \Z$.  
%
On the other hand, the union $\mathrm{P}$ of foliations of height at most two is open.

%Note that there are codimension one recurrent non-$S_{1/2}$ foliations on compact 3-manifolds $M$ 
%with $E = M$ (resp. $\overline{E} = M = \overline{LD}$) \cite{H}. 
%%where $E$ (resp. $LD$) is the unions of exceptional (resp. locally dense) leaves. 
%Hence the union $\mathrm{P}$ is not open in general. 

\begin{corollary}\label{cor43}
The union $\mathrm{P}$ of non-compact proper leaves of an $S_{1/4}$ codimension one foliation on a compact manifold $M$ is open. 
\end{corollary}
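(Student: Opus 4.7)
The plan is to show the complement $M \setminus \mathrm{P} = \mathrm{Cl} \cup \mathrm{E} \cup \mathrm{LD}$ is closed, exploiting the standard partition $M = \mathrm{Cl} \sqcup \mathrm{P} \sqcup \mathrm{E} \sqcup \mathrm{LD}$ for codimension one foliations. The $S_{1/4}$ hypothesis translates, via the characterization of $S_{1/4}$ as $\mathop{\mathrm{ht}}_{\tau_{\F}} M \leq 1$, to a finite-height situation, so Lemma \ref{lem:002} applies and yields that $\mathrm{LD}$ is open and $\overline{\mathrm{E}}$ is a finite union of closures of exceptional leaves. Since each exceptional minimal set consists solely of exceptional leaves in a common leaf class, these closures already lie inside $\mathrm{E}$; hence $\mathrm{E}$ is closed. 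Together with the codimension-one fact that $\mathrm{Cl}$ is closed (recalled at the start of the proof of Lemma \ref{lem:002}), we already have that $\mathrm{Cl} \cup \mathrm{E}$ is closed.

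The main step is to prove $\overline{\mathrm{LD}} \subseteq \mathrm{LD} \cup \mathrm{Cl} \cup \mathrm{E}$, so that adjoining the open set $\mathrm{LD}$ to the closed set $\mathrm{Cl} \cup \mathrm{E}$ does not enlarge its closure. Fix a locally dense leaf $L$; by Lemma \ref{lem:02a} it is $<_{\F}$-maximal, hence of height exactly one in the $S_{1/4}$ setting. Any leaf $L' \subseteq \overline{L} \setminus \hat{L}$ then satisfies $L' <_{\F} L$ and has height zero, that is, $\overline{L'} = \hat{L'}$. Such an $L'$ cannot belong to $\mathrm{P}$ (a proper non-closed leaf satisfies $\hat{L'} = L'$ but $\overline{L'} \supsetneq L'$) nor to $\mathrm{LD}$ (whose leaves have height one), so $L' \in \mathrm{Cl} \cup \mathrm{E}$, giving the containment.

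Combining these facts, $\overline{\mathrm{LD} \cup \mathrm{Cl} \cup \mathrm{E}} \subseteq \mathrm{LD} \cup \mathrm{Cl} \cup \mathrm{E}$, so this union is closed and $\mathrm{P}$ is open. The main obstacle is the boundary analysis for $\mathrm{LD}$ in the middle paragraph: it rests on the fact that a non-locally-dense, non-proper leaf $L'$ with $\overline{L'} = \hat{L'}$ must be either compact or contained in an exceptional minimal set, which is what allows the four-way partition to collapse neatly under the height-one constraint imposed by $S_{1/4}$.
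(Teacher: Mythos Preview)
Your argument has a genuine gap at the passage from the per-leaf containment to the global one. In the middle paragraph you fix a single locally dense leaf $L$ and correctly show that every leaf in $\overline{L}\setminus\hat L$ has height zero, hence lies in $\mathrm{Cl}\cup\mathrm{E}$. This yields
\[
\bigcup_{L\ \text{locally dense}}\overline{L}\ \subseteq\ \mathrm{LD}\cup\mathrm{Cl}\cup\mathrm{E},
\]
but it does \emph{not} yield $\overline{\mathrm{LD}}\subseteq\mathrm{LD}\cup\mathrm{Cl}\cup\mathrm{E}$, because the closure of a union can strictly contain the union of the closures when there are infinitely many leaf classes in $\mathrm{LD}$. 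Nothing you have assembled---$\mathrm{LD}$ open, $\mathrm{Cl}$ closed, $\mathrm{E}$ closed---rules out a proper non-compact leaf $L_x$ lying in $\overline{\mathrm{LD}}$ while avoiding every individual $\overline{L}$.

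This is precisely the point the paper's proof treats with an additional step: assuming $x\in\overline{\mathrm{LD}}\cap\mathrm{P}$, the fact that no single $\overline{L}$ meets $\mathrm{P}$ forces infinitely many distinct locally dense leaf classes $\hat L_i$ to accumulate on $x$; one then argues that $x$ is approximated by points of the boundaries $\partial L_i\subseteq\mathrm{Cl}\sqcup\mathrm{E}$, whence $x\in\overline{\mathrm{Cl}\sqcup\mathrm{E}}=\mathrm{Cl}\sqcup\mathrm{E}$, contradicting $x\in\mathrm{P}$. Your write-up needs either this accumulation-of-boundaries argument or some other device bridging the gap between $\bigcup\overline{L}$ and $\overline{\bigcup L}$. (A minor side remark: a locally dense leaf need not have height exactly one---on a minimal component it has height zero---but that case is trivial and does not affect the real issue.)
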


\begin{proof}
By Lemma \ref{lem:01}, the height of $\tau_{\F}$ is at most one. 
By the decomposition of codimension one foliations, the union $\mathrm{Cl} \sqcup \mathrm{E} = M - (\mathrm{P} \sqcup \mathrm{LD})$ is closed. 
Assume that there is a locally dense leaf $L'$ with $\overline{L'} \cap \mathrm{P} \neq \emptyset$. 
Then the height of any locally dense leaf is at least two,  which contradicts to the hypothesis. 
Thus the boundary $\partial L$ of any locally dense leaf $L$ is contained in $\mathrm{Cl} \sqcup \mathrm{E}$, where $\partial A := \overline{A} - A$ is the boundary of a subset $A$.  
Assume that $\overline{\mathrm{LD}} \cap \mathrm{P} \neq \emptyset$. 
Fix a point $x \in \overline{\mathrm{LD}} \cap \mathrm{P}$. 
Since the closure of any locally dense leaf intersects to no points in $\mathrm{P}$, 
there are infinitely many locally dense leaves $L_i$ $( i \in \Z_{>0})$ such that 
$x \in \overline{\bigcup_{i \in \Z_{>0}} \partial L_i} \subseteq \mathrm{Cl} \sqcup \mathrm{E}$, which contradicts to $x \in \mathrm{P}$. 
\end{proof}

The non-existence of non-closed proper leaves implies 
the openness of locally dense leaves. 

\begin{lemma}\label{lem046}
Let $\F$ be a codimension one foliation on a compact manifold $M$ with $\mathrm{P} = \emptyset$. 
A leaf $L$ is locally dense \iff $\hat{L}$ is an open point in $M/\hat{\F}$. 
\end{lemma}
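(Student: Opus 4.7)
The plan is to dispatch the converse first as a short formality, and then to give the forward direction by exhibiting $\hat{L}$ as an intersection of two manifestly open subsets of $M$.

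For $(\Leftarrow)$, if $\{\hat{L}\}$ is open in $M/\hat{\F}$ then the definition of the quotient topology makes its preimage $\hat{L}$ an open subset of $M$ containing $L$; since every $L' \in \hat{L}$ satisfies $L' \subseteq \overline{L'} = \overline{L}$ we have $\hat{L} \subseteq \overline{L}$, so $\overline{L}$ contains the non-empty open set $\hat{L}$ and $L$ is locally dense by definition.

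For $(\Rightarrow)$, I would proceed in two steps. First, under $\mathrm{P} = \emptyset$ every leaf is compact, exceptional, or locally dense; Lemma \ref{lem:02a} makes every locally dense class $\leq_{\F}$-maximal, while compact and exceptional classes are pairwise $\leq_{\F}$-incomparable (compact leaves equal their own closures, and an exceptional minimal set contains only exceptional leaves with the same closure, so neither type of closure can strictly contain the other). Hence the height of $\tau_{\F}$ is at most one, so Lemma \ref{lem:002} applies and gives that $\mathrm{LD}$ is an open subset of $M$. Second, I would establish the identification $\hat{L} = \mathrm{LD} \cap \mathrm{int}(\overline{L})$: for $\hat{L} \subseteq \mathrm{LD} \cap \mathrm{int}(\overline{L})$ I would show that every locally dense leaf $L'$ lies in $\mathrm{int}(\overline{L'})$, using that the frontier $\partial \overline{L'} = \overline{L'} \cap \overline{M \setminus \overline{L'}}$ is $\F$-saturated by Proposition \ref{cor:02} (both factors are saturated), so $L' \cap \partial \overline{L'} \neq \emptyset$ would force $L' \subseteq \partial \overline{L'}$ and hence $\mathrm{int}(\overline{L'}) = \emptyset$, contradicting local density; for the reverse inclusion I would take $x \in \mathrm{LD} \cap \mathrm{int}(\overline{L})$, use saturation of $\overline{L}$ to get $\overline{L_x} \subseteq \overline{L}$, and then appeal to $\leq_{\F}$-maximality of both $\hat{L}$ and $\hat{L_x}$ from Lemma \ref{lem:02a} to force $\overline{L_x} = \overline{L}$, i.e.\ $x \in \hat{L}$.

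Combining these two steps, $\hat{L}$ is the intersection of the two open sets $\mathrm{LD}$ and $\mathrm{int}(\overline{L})$ and hence is itself open in $M$, which means $\{\hat{L}\}$ is open in $M/\hat{\F}$. The main obstacle is the saturation of $\partial \overline{L'}$, which is what pins a locally dense leaf inside the interior of its own closure rather than on its frontier; once this is in place, the rest is a routine assembly of Lemma \ref{lem:02a}, Lemma \ref{lem:002}, and Proposition \ref{cor:02}.
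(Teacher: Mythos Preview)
Your Step 2 is correct and in fact more transparent than the paper's own argument for the forward direction: identifying $\hat{L}$ with $\mathrm{LD}\cap\mathrm{int}(\overline{L})$ via saturation of $\partial\overline{L}$ and maximality from Lemma \ref{lem:02a} is exactly the right mechanism.

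The gap is in Step 1. Your claim that $\mathop{\mathrm{ht}}\tau_{\F}\leq 1$ rests on the parenthetical assertion that compact and exceptional classes are pairwise $\leq_{\F}$-incomparable, which you justify by saying ``an exceptional minimal set contains only exceptional leaves with the same closure.'' That sentence is true of exceptional \emph{minimal} sets, but an exceptional leaf need not have minimal closure: even with $\mathrm{P}=\emptyset$ one can have an exceptional leaf $L_2$ whose closure strictly contains a compact leaf $L_3$, and a locally dense leaf $L_1$ above both, giving a chain $L_3<L_2<L_1$ of height $2$. The paper itself exhibits such examples later (recurrent codimension-one foliations with exceptional local minimal sets that are not minimal). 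So the hypothesis of Lemma \ref{lem:002} is not available, and your route to the openness of $\mathrm{LD}$ fails.

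The paper sidesteps this entirely: it simply invokes the standard structural fact for codimension-one foliations that $\mathrm{Cl}\sqcup\mathrm{E}$ is closed in $M$, so under $\mathrm{P}=\emptyset$ the complement $\mathrm{LD}=M\setminus(\mathrm{Cl}\sqcup\mathrm{E})$ is open with no height bound needed. Replace your first step with that one-line citation and the rest of your argument goes through unchanged.
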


\begin{proof} 
Since the union $\mathrm{E} \sqcup \mathrm{Cl}$ is closed 
and $\mathrm{P} = \emptyset$,  
the complement  
$\mathrm{LD} = M - (\mathrm{E} \sqcup \mathrm{Cl})$ is open. 
Obviously, 
an open point in $M/\hat{\F}$ is locally dense. 
For a locally dense leaf $L$, 
since the leaf class $\hat{L}$ is closed in 
the open subset $\mathrm{LD}$, 
the intersection $\hat{L} \cap \mathrm{LD}$ is 
an open saturated \nbd of $L$ 
and so the leaf $L$ is an open point in $M/\hat{\F}$.  
\end{proof}

Since a foliation is recurrent if and only if $\mathrm{P} = \emptyset$, 
we state the following statements. 

\begin{proposition}\label{lem:12}
Let $\F$ be a codimension one recurrent foliation on a compact manifold $M$. 
A leaf $L$ is locally dense \iff $\hat{L}$ is an open point in $M/\hat{\F}$. 
\end{proposition}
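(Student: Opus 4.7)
The statement is essentially a restatement of Lemma \ref{lem046} under a different hypothesis, so my plan is to reduce to that lemma by translating the recurrence hypothesis into the condition $\mathrm{P} = \emptyset$.

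First, I would invoke the earlier theorem characterizing recurrence for codimension one foliations on compact manifolds, which asserts the equivalence $\F \text{ recurrent} \Leftrightarrow \mathrm{P} = \emptyset \Leftrightarrow \F \text{ is } C_0$. Since $\F$ is assumed to be a codimension one recurrent foliation on the compact manifold $M$, this immediately gives $\mathrm{P} = \emptyset$, so the hypothesis of Lemma \ref{lem046} is satisfied verbatim.

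Having arrived at the hypothesis of Lemma \ref{lem046}, I would simply apply that lemma, which states that under $\mathrm{P} = \emptyset$ a leaf $L$ is locally dense if and only if $\hat{L}$ is an open point in $M/\hat{\F}$. This yields the desired equivalence for $\F$ without further work.

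In short, no real obstacle arises: this is a corollary whose only content is to note that the recurrence hypothesis is precisely what is needed to invoke Lemma \ref{lem046}. The one thing I would be careful about is phrasing, namely making clear that the ingredient $\mathrm{P} = \emptyset$ follows from the earlier recurrence theorem (not just from the trivial implication $C_0 \Rightarrow$ recurrent of Lemma \ref{lem410}), so that the reader sees exactly which previous result is being reused.
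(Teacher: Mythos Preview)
Your proposal is correct and matches the paper's approach exactly: the paper does not give a separate proof of this proposition but simply prefaces it with the remark ``Since a foliation is recurrent if and only if $\mathrm{P} = \emptyset$, we state the following statements,'' making clear it is an immediate corollary of Lemma~\ref{lem046} via the earlier recurrence theorem. Your write-up just makes this reduction explicit.
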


\subsection{Dynamical-system-like properties of foliations}\label{subsec01}

Recall that the following statements are equivalent for a codimension one foliation $\F$ on a compact manifold $M$: 1) $\F$ is $S_{1}$; 2) $\F$ is $S_{2}$; 3) $\F$ is compact or minimal (i.e. $M = \mathrm{Cl}$ or $M = \mathrm{LD}$) \cite{Y3}. 
First, we have a characterization of $S_{1/4}$ foliations, to state dynamical-system-like properties of foliations.

\begin{lemma}\label{lem:31}
Let $\F$ be a codimension one foliation on a compact manifold $M$. 
Then the following are equivalent: 
\\
1. 
${\F}$ is $S_{1/4}$. 
\\
2. 
$\mathrm{P} \subseteq \max M$ 
and 
$\mathrm{E} \subseteq \min M$. 
\\
3. 
Each non-compact proper leaf is kerneled and the closure of each exceptional leaf is minimal. 

In any case, the unions $\mathrm{P}$ ane $\mathrm{LD}$ are open. 
\end{lemma}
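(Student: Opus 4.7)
The plan is to reduce to an order-theoretic condition via Theorem \ref{cor}(1), catalogue where each of the four leaf types $\mathrm{Cl}$, $\mathrm{P}$, $\mathrm{LD}$, $\mathrm{E}$ sits in $\leq_{\F}$, and then read (3) as a topological transcription of (2).

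First I would record the positions of the standard leaf types. Compact leaves satisfy $\overline{L}=L$, so $\mathrm{Cl}\subseteq \min M$. Locally dense leaves are maximal by Lemma \ref{lem:02a}, giving $\mathrm{LD}\subseteq \max M$. An exceptional leaf $L$ has its closure equal to an exceptional minimal set, so $\hat{L}=\overline{L}$ is closed and $\mathrm{E}\subseteq \min M$. The crucial remaining input is that a non-compact proper leaf is never minimal in $\leq_{\F}$: properness makes $L$ locally closed, hence open in its connected closure $\overline{L}$, and $L$ is also dense in $\overline{L}$; a second leaf $L'\ne L$ with $\overline{L'}=\overline{L}$ would provide a second open dense subset of $\overline{L}$ disjoint from $L$, which is impossible in a connected space. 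Therefore $\hat{L}=\{L\}\subsetneq \overline{L}$ and $\mathrm{P}\cap \min M=\emptyset$.

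With this dictionary, the equivalence of (1) and (2) is immediate from Theorem \ref{cor}(1): (1) means $\mathop{\mathrm{ht}}_{\tau_{\F}} M \leq 1$, i.e.\ $M=\min M\cup \max M$, which combined with $\mathrm{P}\cap \min M=\emptyset$ forces $\mathrm{P}\subseteq \max M$; conversely, (2) together with $\mathrm{Cl}\subseteq \min M$ and $\mathrm{LD}\subseteq \max M$ gives $M\subseteq \min M\cup \max M$, yielding $\mathop{\mathrm{ht}}_{\tau_{\F}} M\leq 1$. For the equivalence of (2) and (3), in the $T_0$ class space one has $\mathop{\mathrm{ker}}\hat{L}=\mathop{\uparrow}\hat{L}$, so $L$ is kerneled if and only if $\hat{L}$ is maximal; and $\overline{L}$ is a minimal set if and only if $\hat{L}=\overline{L}$ is closed, equivalently $L\in \min M$. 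These two translations carry (3) onto (2) verbatim.

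For the final assertion, any of the equivalent conditions gives $\mathop{\mathrm{ht}}_{\tau_{\F}} M \leq 1 <\infty$, so Lemma \ref{lem:002} produces the openness of $\mathrm{LD}$ and Corollary \ref{cor43} produces the openness of $\mathrm{P}$. The main obstacle is the claim $\mathrm{P}\cap \min M=\emptyset$; once the openness and density of a proper leaf in its connected closure are pinned down, the rest is bookkeeping among the four leaf types and a transcription between ``kerneled / closure minimal'' and ``maximal / minimal.''
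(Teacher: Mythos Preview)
Your argument has a genuine gap in the direction $(1)\Rightarrow(2)$. You record as a ``dictionary'' fact that every exceptional leaf $L$ satisfies $\hat L=\overline L$, i.e.\ $\mathrm{E}\subseteq\min M$ unconditionally. This is false: the closure of an exceptional leaf need not be a minimal set. The paper itself constructs (just after Theorem~\ref{cor:01}) a codimension one foliated bundle with one compact leaf, exceptional leaves whose closures contain that compact leaf, and locally dense leaves on top; there the exceptional leaves are \emph{not} minimal, and indeed the foliation is not $S_{1/4}$. Since $\mathrm{E}\subseteq\min M$ is precisely half of condition~(2), assuming it at the outset makes your $(1)\Rightarrow(2)$ argument circular: you only really deduce $\mathrm{P}\subseteq\max M$ from~(1).

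The missing ingredient is the one the paper invokes: by Lemma~3 of \cite{S2}, an exceptional leaf is never \emph{maximal} (its upset is a genuine open saturated neighbourhood). Combined with $\mathop{\mathrm{ht}}_{\tau_{\F}}M\leq 1$ from~(1), this forces $\mathrm{E}\subseteq\min M$. Your treatment of $\mathrm{P}$ is essentially correct in spirit, though the Baire-style step is shakier than necessary: you assert that a second leaf $L'$ with $\overline{L'}=\overline L$ would be open in $\overline L$, but nothing guarantees $L'$ is proper. The cleaner argument is that $L'\subseteq\overline L\setminus L$, which is closed because $L$ is proper, so $\overline{L'}\subseteq\overline L\setminus L\subsetneq\overline L$, a contradiction; hence $\hat L=L\subsetneq\overline L$ and $L\notin\min M$. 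With the Salhi input for $\mathrm{E}$ and this correction for $\mathrm{P}$, the rest of your plan (the $(2)\Leftrightarrow(3)$ transcription, the $(2)\Rightarrow(1)$ direction, and the appeals to Lemma~\ref{lem:002} and Corollary~\ref{cor43} for the final sentence) goes through and matches the paper's proof.
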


\begin{proof} 
Obviously, 
the conditions 2 and 3 are equivalent. 
Since locally dense leaves are maximal 
and compact leaves are minimal, 
the condition 3 implies the condition 1. 
Conversely, 
suppose that the condition 1 holds. 
Since any non-compact proper leaf is not minimal, 
it is maximal and so kerneled. 
By Lemma 3 \cite{S}, 
each exceptional leaf is not maximal 
and so the closure of each exceptional leaf is minimal. 
\end{proof}

Note 
there is a codimension one foliation on a compact manifold 
whose leaf class space is not  $S_{1/3}$ but $S_{1/4}$. 
For instance, so is the Reeb foliation on the sphere $\S^3$.  
In fact, the complement $C := \S^3 - L$ of any planer leaf $L$  
is compact with respect to the quotient topology $\tau_{\F}$. 
Since the unique closed subset containing $C$ is the sphere $\S^3$ 
and the leaf $L$ is not a downset, there is no pair of a closed subset $F$ and 
a downset $D$ such that $C = F \setminus D$. 
We show the following equivalence for codimension one foliations. 

\begin{proposition}\label{lem47}
Let $\F$ be a codimension one foliation on a compact manifold $M$. 
Then the following are equivalent: 
\\
1. 
${\F}$ is $S_{1/2}$. 
\\
2. 
${\F}$ is $S_{1/3}$.  
\\
3. 
${\F}$ is $S_{1/4}$ 
and 
$\mathrm{P} = \emptyset$ 
%(i.e. ${\F}$ is $S_{1/4}$ and recurrent).  
\\
4. 
$\mathrm{E} \subseteq \min M$ and $\mathrm{P} = \emptyset$. 
\\
5. 
The closure of each leaf which is not locally dense is a minimal set.   
\end{proposition}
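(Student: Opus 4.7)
The plan is to establish $(3) \Leftrightarrow (4) \Leftrightarrow (5)$ from the structural characterizations and then close the loop $(3) \Rightarrow (1) \Rightarrow (2) \Rightarrow (3)$. For $(3) \Leftrightarrow (4)$, Lemma \ref{lem:31} gives that $\F$ is $S_{1/4}$ iff $\mathrm{P} \subseteq \max M$ and $\mathrm{E} \subseteq \min M$; under $\mathrm{P} = \emptyset$ the first inclusion is vacuous, so the conditions match. For $(4) \Leftrightarrow (5)$, under $\mathrm{P} = \emptyset$ each non-locally-dense leaf is compact or exceptional: a compact leaf $L$ has $\overline{L} = L$ which is trivially a minimal set, while an exceptional leaf $L$ has $\overline{L}$ equal to a local exceptional minimal set coinciding with the class $\hat{L}$, and this is a (global) minimal set iff $\hat{L}$ is a closed class iff $L \in \min M$.

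For $(3) \Rightarrow (1)$, with $\mathrm{P} = \emptyset$ in hand, Lemma \ref{lem046} yields that each locally dense leaf corresponds to an open class in $M/\hat{\F}$; compact and exceptional leaves produce closed classes (the latter because $\mathrm{E} \subseteq \min M$ from Lemma \ref{lem:31}). Hence every class is either closed or open, establishing $S_{1/2}$. The implication $(1) \Rightarrow (2)$ is the standard inclusion $S_{1/2} \Rightarrow S_{1/3}$.

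The main work lies in $(2) \Rightarrow (3)$. Since $S_{1/3} \Rightarrow S_{1/4}$, only $\mathrm{P} = \emptyset$ requires proof, and I argue by contradiction. Suppose $L \in \mathrm{P}$. By Lemma \ref{lem:31} the class $\hat{L}$ is maximal, and since $L$ is a proper non-compact leaf, $\hat{L} = \{L\}$ is a singleton class which is neither closed (because $L$ is non-compact) nor open (because $L$ has codimension one in $M$) in the class space. I then propose the subset $C := M/\hat{\F} - \{\hat{L}\}$ as a witness that $S_{1/3}$ fails, via the characterization in Lemma \ref{lem:01}: any representation $C = F - D$ with $F$ closed and $D$ a downset forces $F = M/\hat{\F}$ (because $\overline{C} = M/\hat{\F}$, since $\{\hat{L}\}$ is not open), whence $D = \{\hat{L}\}$; but $\mathop{\hat{\Downarrow}}\hat{L}$ is nonempty (being the image of the closed invariant set $\overline{L} - L$), so $\{\hat{L}\}$ is not a downset, yielding the desired contradiction.

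The hard part will be establishing the compactness of $C = M/\hat{\F} - \{\hat{L}\}$ in full generality. The argument I envisage leverages the codimension-one structure together with the compactness of $M$: for any minimal set $\mathcal{M} \subseteq \overline{L} - L$, every saturated open neighborhood of $\mathcal{M}$ in $M$ absorbs the entire leaf $L$ (since $L$ accumulates on $\mathcal{M}$ and the neighborhood is a union of leaves), so in the class space $\hat{L} \in \mathrm{ker}\hat{\mathcal{M}}$. Combining this absorption phenomenon with the compactness of $M/\hat{\F}$ (as a quotient of the compact manifold $M$) is what should force every open cover of $C$ to admit a finite subcover, generalizing the Reeb-foliation picture in which the only saturated open neighborhood of the torus class is the whole class space.
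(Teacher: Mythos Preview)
Your proof is correct and follows the same route as the paper's: the equivalences $(3)\Leftrightarrow(4)\Leftrightarrow(5)$, the implication $(3)\Rightarrow(1)$ via Lemma~\ref{lem046} and Lemma~\ref{lem:31}, and the witness $C=M\setminus L$ for the failure of $S_{1/3}$ in $(2)\Rightarrow(3)$ all match. Your hesitancy about the compactness of $C$ is unwarranted: the paper dispatches it by observing that the \emph{only} saturated open neighbourhood of $\min M$ is $M$ itself (the complement of any proper saturated open set is closed, saturated, and nonempty, hence contains a minimal set), so any $\tau_\F$-open cover of a set containing $\min M$---in particular of $M\setminus L$, since a non-compact proper leaf is never minimal---is already a cover of the compact space $M$ and has a finite subcover. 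Your ``absorption through one $\mathcal M\subseteq\overline L\setminus L$'' is the same mechanism localized to a single minimal set and works equally well: any open cover of $C$ contains some $U_\alpha\ni\hat{\mathcal M}$, whence $\hat L\in U_\alpha$, so the cover already covers the whole compact class space.
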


\begin{proof} 
Note hat $M = \mathrm{E} \sqcup \mathrm{Cl} \sqcup \mathrm{LD} \sqcup \mathrm{P}$. 
The decomposition implies that the condition 4 corresponds to the condition 5. 
By the definitions, an $S_{1/2}$ topology is $S_{1/3}$ (i.e. the condition 1 implies the condition 2).  
By Lemma \ref{lem:31}, the conditions 3 and 4 are equivalent. 
By Lemma \ref{lem046} and Lemma 2.3 \cite{Y}, the condition 3 
%and the fact $\mathrm{E} \subset \mathrm{int} \overline{\mathrm{P}}$ imply 
implies the condition 1. 
Suppose that $\tau_{\F}$ is $S_{1/3}$. 
Note that the unique $\F$-saturated \nbd of $\min M$ is the whole manifold $M$. 
Since $\min M$ is compact with respect to $\tau_{\F}$, 
any subset containing $\min M$ is compact with respect to the saturated topology $\tau_{\F}$. 
Assume that there is a non-compact proper leaf $L$. 
Then the complement $M - L$ is not closed but compact with respect to $\tau_{\F}$.
Since $M = \overline{M - L}$ and 
the proper leaf $L$ is not a downset, 
%$\overline{L} \neq L$, 
there is no pair of a closed $\F$-saturated subset $F$ and a downset $D$ such that 
$C = F \setminus D$, which contradicts to the $S_{1/3}$ separation axiom. 
Thus $\mathrm{P} = \emptyset$ and so the condition 3 holds.  
\end{proof}

There is a codimension one $S_{1/2}$ foliation on a compact manifold 
such that $\mathrm{LD}$ consists of 
infinitely many leaf classes. 
Indeed, we construct a foliated bundle over $\Sigma_2$ with infinitely many compact (resp. locally dense) leaves. 
Let $\S^1 = \R \sqcup \{ \infty \}$ be the one-point compactification of $\R$. 
Define two $C^{\infty}$ diffeomorphisms $f_1, f_2$ as follows: 
Consider diffeomorphisms $f_1, f_2: (0,1) \to (0,1)$ which are 
conjugate to commutative translations on $\R$ with relatively prime translation numbers. 
Then each orbit of the group generated by $f_1$ and $f_2$ is minimal. 
Extend $f_i: (0,1) \to (0,1)$ into a diffeomorphism such that  
$f_i(x + 1) = f_i(x) + 1$.  
Taking $\infty$ as a fixed point, 
extend $f_i: \S^1 \to \S^1$ into a diffeomorphism such that  
Then the resulting foliated bundle is desired. 
We show the following equivalence.

\begin{theorem}\label{cor:01}
Let $\F$ be a codimension one foliation on a compact manifold $M$. 
Then the following are equivalent: 
\\
1. $\F$ is $S_{1/2}$. 
\\
2. $\F$ is $S_{1/3}$. 
\\
3. $\F$ is recurrent and $S_{1/4}$. 
\\
4. 
$\F$ is recurrent and $\mathrm{E} \subseteq \min M$.  
\\
5. $M - \min M \subseteq \mathrm{LD}$.  
\end{theorem}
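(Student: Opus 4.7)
The plan is to derive this theorem from Proposition~\ref{lem47}, the earlier theorem characterizing recurrent foliations on compact manifolds (recurrent $\Leftrightarrow \mathrm{P}=\emptyset \Leftrightarrow C_{0}$), and the leaf decomposition $M = \mathrm{Cl}\sqcup\mathrm{LD}\sqcup\mathrm{E}\sqcup\mathrm{P}$. The chain $(1)\Leftrightarrow(2)\Leftrightarrow(3)\Leftrightarrow(4)$ comes essentially for free: Proposition~\ref{lem47} already gives $(1)\Leftrightarrow(2)\Leftrightarrow(\F\text{ is }S_{1/4}\text{ and }\mathrm{P}=\emptyset)\Leftrightarrow(\mathrm{E}\subseteq\min M\text{ and }\mathrm{P}=\emptyset)$, and substituting ``$\F$ is recurrent'' for the clause ``$\mathrm{P}=\emptyset$'' via the recurrence theorem converts the last two into precisely conditions~(3) and~(4) of the present statement.

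The genuinely new content lies in $(4)\Leftrightarrow(5)$. I would base this on two order-theoretic observations about $\leq_{\F}$. First, $\mathrm{Cl}\subseteq\min M$, since the class of a compact leaf is the leaf itself, which is closed. Second, $\mathrm{P}\cap\min M=\emptyset$: if $L$ is a non-compact proper leaf, properness makes $L$ open in $\overline{L}$ while non-compactness (on the compact $M$) makes $L$ not closed, so $\overline{L}\setminus L$ is a proper nonempty closed $\F$-saturated subset of $\overline{L}$; hence $\overline{L}$ is not a minimal set, equivalently $L\notin\min M$.

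Granted these, $(4)\Rightarrow(5)$ is immediate: under $\mathrm{P}=\emptyset$ and $\mathrm{E}\subseteq\min M$ the decomposition becomes $M=\mathrm{Cl}\sqcup\mathrm{LD}\sqcup\mathrm{E}$ with $\mathrm{Cl}\cup\mathrm{E}\subseteq\min M$, so $M-\min M\subseteq\mathrm{LD}$. For $(5)\Rightarrow(4)$, condition~(5) combined with the decomposition and the disjointness of $\mathrm{LD}$ from $\mathrm{E}$ and $\mathrm{P}$ yields $\mathrm{E}\cup\mathrm{P}\subseteq\min M$; the second observation forces $\mathrm{P}=\emptyset$, leaving $\mathrm{E}\subseteq\min M$. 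The main obstacle is the non-minimality claim for non-compact proper leaves, the one spot where properness is invoked substantively; everything else is bookkeeping against the leaf decomposition.
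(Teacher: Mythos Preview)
Your proposal is correct and matches the paper's implicit argument. In the paper this theorem is stated without proof, as a direct reformulation of Proposition~\ref{lem47}: once one substitutes ``recurrent'' for ``$\mathrm{P}=\emptyset$'' via the earlier recurrence theorem, conditions (1)--(4) line up verbatim, and condition~(5) here is just the order-theoretic restatement of condition~(5) there (``the closure of each non-locally-dense leaf is a minimal set'' is exactly $M\setminus\mathrm{LD}\subseteq\min M$, i.e.\ $M-\min M\subseteq\mathrm{LD}$). Your explicit unpacking of $(4)\Leftrightarrow(5)$ via $\mathrm{Cl}\subseteq\min M$ and $\mathrm{P}\cap\min M=\emptyset$ is precisely the content the paper leaves implicit when it says ``the decomposition implies that the condition~4 corresponds to the condition~5'' in the proof of Proposition~\ref{lem47}.
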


There are recurrent foliations with exceptional local minimal sets which are not minimal, which are not $S_{1/2}$. 
Indeed, fix $H_0$ be a finitely generated group action on a circle $\S^1 := \R/\Z$ which 
consists of a unique minimal set and dense orbits. 
Denote by $H_1$ the finitely generated group action on $\R$ generated by 
the lift of generators of $H$. 
Let $H_2$ the group action generated by $H_1$ and a translation $f : \R \to \R$ defined by $f(x) = x + 1$. 
Denote by $\S^1_{\infty} := \R \sqcup \{ \infty \}$ the one-point compactification of $\R$. 
Extend the group action $H_2$ on $\R$ into an action on $\S^1_{\infty}$ with the global fixed point $\infty$. 
Then the resulting finitely generated group-action on a circle $\S^1$ has 
a unique fixed point $\infty$,  one locally minimal set $\mathcal{E}$ which is exceptional, and locally dense orbits (i.e. $\mathrm{LD} = \R -  \mathcal{E}$). 
This implies that there is a non-$S_{1/2}$ codimension one foliated bundle on closed surface consists of one compact leaf, one exceptional local minimal sets which are not minimal, and locally dense leaves. 

%Moreover, there is a non-$S_{1/4}$ non-wandering codimension one foliation 
%without exceptional leaves on $M = \Sigma_3 \times \S^1$. 
%% which is not $S_{1/2}$. 
%Indeed, 
%we construct a foliated bundle over $\Sigma_3$ with one compact leaf, one proper leaf, and locally dense leaves. 
%Let $\S^1 = \R \sqcup \{ \infty \}$ be the one-point compactification of $\R$. 
%Define three homeomorphisms $f, g, h$ which are pairwise commutative as follows: 
%%
%Consider $f, g$ as commutative translation on $(0, 1)$ and extend $f, g: \R \to \R$ into homeomorphisms such that $f(x + 1) = f(x) + 1$ and $g(x + 1) = g(x) + 1$. 
%Define $h: \R \to \R$ by $h(x) = x + 1$. 
%Extend $f, g, h$ into homeomorphisms by adding common fixed point $\infty$. 
%Then the resulting foliated bundle is desired. 
%Indeed, 
%$M /\hat{\F}$ consists of three points $\hat{L}_{\infty}, \hat{L}_{cl}, \hat{L}_{\Z}$ such that $\hat{L}_{\infty}$ is a closed point, $\hat{L}_{cl}$ is an open point with $\overline{\hat{L}_{cl}} = M /\hat{\F}$, and $\hat{L}_{\Z}$ is neither a closed point nor an open point. 
%

\subsection{Several separation axioms for foliations}

Recall that a topological space $X$ is $S_D$ if 
the derived set $\overline{x} - \hat{x}$ of the class of any point $x \in X$ is closed.

\begin{lemma}\label{lem45}
A codimension one $S_{1/4}$ foliation on a compact manifold is $S_D$. 
\end{lemma}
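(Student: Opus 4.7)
The plan is to verify the $S_D$ property leaf-by-leaf using the decomposition $M = \mathrm{E} \sqcup \mathrm{Cl} \sqcup \mathrm{LD} \sqcup \mathrm{P}$ together with Lemma \ref{lem:31}, which gives $\mathrm{P}\subseteq \max M$, $\mathrm{E}\subseteq \min M$, and the openness of $\mathrm{P}$ and $\mathrm{LD}$ (hence the closedness of $\mathrm{Cl}\sqcup \mathrm{E}$). By the characterization of $S_D$, I need to show that for every leaf $L$ the derived set $\overline{L}\setminus \hat{L}$ is closed in $M$.

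I would dispatch the three easier cases first. If $L\in\mathrm{Cl}$ then $L=\hat L=\overline L$ and the derived set is empty. If $L\in\mathrm{E}$ then $L\in\min M$, so $\mathop{\downarrow_{\F}} L=\hat L$, which together with $\overline L=\mathop{\downarrow_{\F}} L$ forces $\overline L=\hat L$ and the derived set is again empty. If $L\in\mathrm{P}$ and $L$ is not compact, then $L$ is locally closed, so $L$ is open in $\overline L$; moreover $\hat L=L$, because for any $L'\neq L$ with $\overline{L'}=\overline{L}$ the local closedness of $L$ puts $L'$ inside the closed subset $\overline{L}\setminus L$, giving $\overline{L'}\subseteq \overline L\setminus L\neq \overline L$, a contradiction. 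Hence $\overline L\setminus \hat L=\overline L\setminus L$ is closed.

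The remaining, and main, case is $L\in\mathrm{LD}$, where the subtlety is that $\hat L$ need not be open in $M$ (because we are not assuming recurrence, cf.\ Lemma \ref{lem046}). The key claim is that
\[
\overline{L}\setminus \hat{L}\;\subseteq\; \mathrm{Cl}\sqcup \mathrm{E}.
\]
To see this, let $L'\subseteq \overline L\setminus \hat L$, so $\overline{L'}\subseteq \overline L$ with $\overline{L'}\neq \overline L$. The leaf $L'$ cannot be locally dense by Lemma \ref{lem:02a}, because that lemma makes every locally dense leaf maximal under $\leq_{\F}$, forcing $\overline{L'}=\overline L$. It cannot be a non-compact proper leaf either, because then $L'\in\mathrm{P}\subseteq \max M$ would preclude $L'<_{\F}L$. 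Hence $L'\in \mathrm{Cl}\sqcup \mathrm{E}$. Since $\mathrm{P}$ and $\mathrm{LD}$ are open by Lemma \ref{lem:31}, the set $\mathrm{Cl}\sqcup \mathrm{E}$ is closed, so $\overline L\setminus \hat L=\overline L\cap (\mathrm{Cl}\sqcup \mathrm{E})$ is the intersection of two closed sets and therefore closed.

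The genuine obstacle in this argument is the locally dense case, where a priori leaves of several types could sit inside $\overline L\setminus \hat L$; the clean resolution is to exploit the maximality of locally dense leaves (Lemma \ref{lem:02a}) together with $\mathrm{P}\subseteq \max M$, so that only compact and exceptional leaves survive in $\overline L\setminus \hat L$, and these form a closed set by the openness of $\mathrm{LD}$ and $\mathrm{P}$. Once this observation is in place, the other three cases are immediate and the proof concludes.
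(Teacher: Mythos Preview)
Your argument is correct and follows the same leaf-type case split as the paper. One small omission: in the locally dense case you establish only the inclusion $\overline{L}\setminus\hat{L}\subseteq\mathrm{Cl}\sqcup\mathrm{E}$, but then assert the equality $\overline{L}\setminus\hat{L}=\overline{L}\cap(\mathrm{Cl}\sqcup\mathrm{E})$; for this you also need $\hat{L}\cap(\mathrm{Cl}\sqcup\mathrm{E})=\emptyset$, which is immediate since any $L'$ with $\overline{L'}=\overline{L}$ has $\mathrm{int}(\overline{L'})\neq\emptyset$ (so $L'\notin\mathrm{E}$) and cannot be compact (else $\overline{L}=L'$ would be a single leaf).

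The paper's handling of the locally dense case is phrased differently: it invokes Lemma~\ref{lem:002} to assert that the leaf class $\hat{L}$ itself is open, whence $\overline{L}\setminus\hat{L}$ is closed directly. Your route through Lemma~\ref{lem:31} (openness of $\mathrm{P}$ and $\mathrm{LD}$, hence closedness of $\mathrm{Cl}\sqcup\mathrm{E}$) together with Lemma~\ref{lem:02a} and $\mathrm{P}\subseteq\max M$ is arguably more transparent, since it sidesteps the extra step---left implicit in the paper---of deducing openness of each individual $\hat{L}$ from the openness of $\mathrm{LD}$.
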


\begin{proof} 
Let $L$ be a leaf of a codimension one $S_{1/4}$ foliation $\F$. 
If $L$ is closed or exceptional, 
then the inferior structure $\overline{L} - \hat{L}$ is empty and so closed. 
If $L$ is non-compact proper, 
then there is 
a saturated open \nbd $U$ of $L$ 
such that 
$\overline{L} \cap U = L$ 
and so 
$
\overline{L} - \hat{L}
= \overline{L} - L 
= \overline{L} \setminus U$ is closed.  
Thus we may assume that  $L$ is locally dense. 
 By Lemma \ref{lem:002} , the leaf class $\hat{L}$ is open and so 
the inferior structure $\overline{L} - \hat{L}$ is closed. 
\end{proof}

%\section{Application to foliations of other lower separation axioms}

We characterize the $S_{YS}$  separation axiom using the $S_{1/2}$ separation axiom. 

\begin{lemma}\label{lem71} 
A codimension one foliation on a compact manifold is $S_{YS}$ 
if and only if it is $S_{1/2}$ such that $\overline{L} \cap \overline{L'} = \emptyset$ 
for any leaves $L, L \subset \mathrm{LD}$ with $\hat{L} \neq \hat{L'}$. 
\end{lemma}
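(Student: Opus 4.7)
The plan is to reduce both directions to Proposition \ref{prop:a}, which reformulates $S_{YS}$ as the condition that $\mathop{\Downarrow}\hat{L}\cap\mathop{\Downarrow}\hat{L'}=\emptyset$ for every pair of distinct classes in $M/\hat{\F}$, combined with the translation between closures in $M$ and downsets in the class poset via the identity $\overline{L}=\bigsqcup\mathop{\hat{\downarrow}}\hat{L}$ noted in the Preliminaries.

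For the $(\Leftarrow)$ direction the plan is to assume $\F$ is $S_{1/2}$ with the closure condition and verify the criterion of Proposition \ref{prop:a}. Theorem \ref{cor:01} gives $\mathop{\mathrm{ht}}_{\tau_\F}M\leq 1$ and $M-\min M\subseteq\mathrm{LD}$, so every non-minimal leaf is locally dense of height one. A case analysis on the heights of $\hat{L},\hat{L'}$ then handles the criterion: if either side is minimal its $\mathop{\Downarrow}$ is empty; if both are height one then both are locally dense, and the hypothesis $\overline{L}\cap\overline{L'}=\emptyset$ translates directly into $\mathop{\downarrow}\hat{L}\cap\mathop{\downarrow}\hat{L'}=\emptyset$, hence into $\mathop{\Downarrow}\hat{L}\cap\mathop{\Downarrow}\hat{L'}=\emptyset$.

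For $(\Rightarrow)$ the plan is as follows. Assuming $S_{YS}$, Proposition \ref{prop:a} together with Theorem \ref{cor} yields $S_{1/4}$, and Lemma \ref{lem:31} gives $\mathrm{P}\subseteq\max M$ and $\mathrm{E}\subseteq\min M$. The closure-disjointness conclusion for locally dense leaves with distinct classes is then immediate from the forest property applied to incomparable height-one classes. The substantive step is to upgrade $S_{1/4}$ to $S_{1/2}$ by proving $\mathrm{P}=\emptyset$; once this is done, Theorem \ref{cor:01} finishes. I plan a contradiction argument: assume $L\in\mathrm{P}$, so $L$ is maximal of height one, and the downward forest of height at most one forces $\mathop{\Downarrow}\hat{L}$ to equal a single minimal class $\hat{M_0}$ with $M_0\subseteq\overline{L}\setminus L$ a minimal set. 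The aim is then to exhibit a second leaf $L'\in\mathrm{P}$ with $M_0\subseteq\overline{L'}$ and $\hat{L'}\neq\hat{L}$; this will place $\hat{M_0}\in\mathop{\Downarrow}\hat{L}\cap\mathop{\Downarrow}\hat{L'}$ and violate the forest property. The existence of such an $L'$ will come from the codim-one holonomy picture near $M_0$: the accumulation of $L$ on $M_0$ produces a nontrivial holonomy orbit converging to $M_0$, Reeb's stability theorem excludes the trivial-holonomy scenario (which would force a Reeb-stable neighborhood of compact leaves, contradicting $L\in\mathrm{P}$), and a distinct orbit of the holonomy pseudogroup supplies another proper leaf $L'$ accumulating on $M_0$. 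The inequality $\hat{L'}\neq\hat{L}$ will be automatic from the side observation that a proper leaf satisfies $\hat{L}=\{L\}$: two distinct leaves with identical closure would each be open dense in that common closure in $M$, contradicting the disjointness of distinct leaves.

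The hard part will be the holonomy-theoretic step producing the second leaf $L'$. The heuristic picture is clear---contracting or otherwise nontrivial one-sided holonomy at $M_0$ has infinitely many distinct orbits, each tracing out a proper leaf accumulating on $M_0$---but a rigorous derivation either invokes Reeb's stability together with a monotonicity argument for orientation-preserving holonomy germs, or cites the local structure results of Hector/Sacksteder that the paper already uses. Once this foliation-theoretic input is in hand, everything else is a routine translation between closures in $M$ and the specialization pre-order on $M/\hat{\F}$.
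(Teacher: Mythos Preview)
Your $(\Leftarrow)$ direction is essentially the paper's argument, just routed through Proposition~\ref{prop:a} and Theorem~\ref{cor:01} rather than Lemma~\ref{lem61} and Proposition~\ref{lem47}; the content is the same.

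For $(\Rightarrow)$ you actually go further than the paper. The paper's own proof only extracts $S_{1/4}$ and the downward-forest structure from Lemma~\ref{lem61}, then reads off the closure condition for leaves in $\mathrm{LD}$; it never argues that $\F$ is $S_{1/2}$, i.e.\ that $\mathrm{P}=\emptyset$. You correctly isolate this as the substantive missing step and propose to supply it.

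Two remarks on your plan for that step. First, a small slip: the claim that ``the downward forest of height at most one forces $\mathop{\Downarrow}\hat{L}$ to equal a single minimal class $\hat{M_0}$'' is false. The forest condition constrains \emph{upsets} (each minimal class lies below at most one height-$1$ class), not downsets; a proper leaf may sit over many minimal sets. This is harmless for the contradiction you want, which only needs \emph{some} minimal $M_0\subset\overline{L}\setminus L$ together with a second height-$1$ class above $\hat{M_0}$.

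Second, the holonomy step can be organized more cleanly by splitting on the type of $M_0$. If $M_0$ is an exceptional minimal set, Salhi's lemma (already cited in the paper) makes $\mathop{\uparrow}_{\F}M_0$ open; but the forest forces $\mathop{\uparrow}_{\F}M_0=M_0\cup L$, a nowhere-dense set, which is an immediate contradiction without any holonomy analysis. If $M_0$ is a compact leaf, then your sketch is the right route: since $L$ spirals onto $M_0$, a transversal $T$ at a point of $M_0$ carries a holonomy contraction taking consecutive intersection points of $L\cap T$ toward $0$; monotonicity then forces \emph{every} point of $T$ between two such intersections to have its holonomy orbit accumulate at $0$, so the corresponding leaf $L_s$ ($L_s\neq L$, $L_s\not\subset M_0$) also has $M_0\subset\overline{L_s}$. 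That gives a second height-$1$ class above $\hat{M_0}$, contradicting the forest. You do not need $L_s$ to be proper for the contradiction---any leaf not in $M_0\cup\{L\}$ with $M_0$ in its closure suffices---so the appeal to Reeb stability and to $L'\in\mathrm{P}$ is unnecessary.
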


\begin{proof}  
Let $\F$ be a codimension one foliation on a compact manifold $M$. 
Replacing $M$ with $M - \mathop{\mathrm{int}} \mathrm{Cl}$, 
we may assume that $\mathop{\mathrm{int}} \mathrm{Cl} = \emptyset$. 
Suppose that $\F$ is $S_{YS}$. 
Lemma \ref{lem61} implies that $\F$ is $S_{1/4}$ and that the leaf class space $M/\hat{\F}$ is a downward forest. 
Then $\mathrm{LD} \subseteq \max M$ 
and so $\overline{L} \cap \overline{L'} = \emptyset$ 
for any leaves $L, L \subset \mathrm{LD}$ with $\hat{L} \neq \hat{L'}$.
Conversely, 
suppose that $\F$ is $S_{1/2}$ such that $\overline{L} \cap \overline{L'} = \emptyset$ 
for any leaves $L, L \subset \mathrm{LD}$ with $\hat{L} \neq \hat{L'}$. 
By Proposition \ref{lem47}, 
there are no non-compact proper leaves and so $M = \mathrm{Cl} \sqcup \mathrm{LD} \sqcup \mathrm{E}$. 
Since the height of $M$ is one, 
we obtain that $\max M = M_1 \subseteq \mathrm{LD}$. 
By the hypothesis, the leaf class space $M/\hat{\F}$ is a downward forest of height at most one.
Lemma \ref{lem61} implies that $\F$ is $S_{YS}$. 
\end{proof}

Applying Lemma \ref{lem:01b} and Lemma \ref{lem:25}, 
we observe the following interpretations. 

\begin{lemma}
The following statement hold: 
\\
1.  
$\tau_{\F}$ is w-$C_0$ \iff $\F$ has no dense leaf. 
\\
2.  
$\tau_{\F}$ is $S_{Q}$ \iff $L \subseteq \overline{L'}$ or $L' \subseteq \overline{L}$ 
for any leaves $L, L'$ whose leaf closures intersect. 
\\
3.  
$\tau_{\F}$ is $C_N$ \iff each leaf closure contains exactly one minimal set. 
\end{lemma}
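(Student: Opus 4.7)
My plan is to translate each of the three equivalences into a condition on the specialization pre-order of the saturated topology $\tau_\F$ on the leaf space, where leaves play the role of points and $L \leq_{\tau_\F} L'$ iff $L \subseteq \overline{L'}$, and then quote the general results proved earlier in the paper. Throughout, I use that the closure of a saturated subset is $\F$-saturated (Proposition \ref{cor:02}), so $\overline{L'}$ is a union of leaves; in particular a point of $\overline{L'} \cap \overline{L''}$ witnesses a leaf $L$ with $L \subseteq \overline{L'} \cap \overline{L''}$, and conversely.

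For part 1, Lemma \ref{lem:25}.4 gives that $\tau_\F$ is w-$C_0$ iff $\tau_\F$ has no top. A top is a leaf $L$ with $L' \leq_{\tau_\F} L$ for every leaf $L'$, i.e.\ $L' \subseteq \overline{L}$ for every $L'$, which amounts to $\overline{L} = M$, i.e.\ $L$ is dense, so the claim is immediate. For part 2, by the definition following Lemma \ref{lem:25} the topology $\tau_\F$ is $S_Q$ iff $M/\hat\F$ is a downward forest, equivalently the upset $\mathop{\hat{\uparrow}}\hat L$ is a pre-chain for every leaf $L$. Spelling this out: whenever $L', L''$ share a common lower bound $L$ in $\leq_{\tau_\F}$, i.e.\ $L \subseteq \overline{L'} \cap \overline{L''}$, the pair $L', L''$ must be $\leq_{\tau_\F}$-comparable, i.e.\ $L' \subseteq \overline{L''}$ or $L'' \subseteq \overline{L'}$. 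By the remark above, the existence of such an $L$ is equivalent to $\overline{L'} \cap \overline{L''} \neq \emptyset$, giving exactly the stated equivalence.

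For part 3, I would apply Proposition \ref{lem:01b}.4: $\tau_\F$ is $C_N$ iff the downset $\overline{L}$ (viewed as the set of leaves contained in the leaf-closure of $L$) is down-directed for every leaf $L$, i.e.\ any two leaves $L_1, L_2 \subseteq \overline{L}$ admit a leaf $L_0 \subseteq \overline{L_1} \cap \overline{L_2}$. The forward direction is easy: if $\overline{L}$ contains two distinct minimal sets $m_1, m_2$, pick $L_i \subseteq m_i$; minimality forces $\overline{L_i} = m_i$, hence $\overline{L_1} \cap \overline{L_2} = \emptyset$, contradicting down-directedness. For the converse, assume each leaf closure contains exactly one minimal set $m$. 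Given $L_1, L_2 \subseteq \overline{L}$, both $\overline{L_1}$ and $\overline{L_2}$ are closed $\F$-saturated subsets of $\overline{L}$ and hence contain a minimal set, which must coincide with $m$; any leaf in $m$ is then a common lower bound.

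The main obstacle is not a subtle computation but the standing hypothesis needed for part 3: the phrase ``exactly one minimal set'' presupposes the existence of a minimal set inside every leaf closure, which requires a Zorn's-lemma argument on nonempty closed $\F$-saturated subsets of $\overline{L}$ and therefore compactness of $M$ (as is the case for the surrounding results of this subsection). Once that existence is granted, parts 1 and 2 are pure translations and part 3 reduces to the two-line arguments above.
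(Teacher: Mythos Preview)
Your proposal is correct and follows essentially the same route as the paper. The paper does not give an explicit proof of this lemma; it simply states that it follows by ``Applying Lemma \ref{lem:01b} and Lemma \ref{lem:25}'', and the analogous flow version earlier in the paper is proved exactly as you outline (top $\leftrightarrow$ dense orbit for part~1, downward-forest translation for part~2, and down-directedness plus existence of minimal sets via compactness for part~3). Your caveat about needing compactness for part~3 is on point and matches the hypothesis used explicitly in the flow case.
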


Note that 
the Reeb foliation on $\T^2$ is not $S_{1/3}$ but $T_{1/4}$  
such that the leaf space corresponds to the leaf class space and 
is a upward tree consisting of the bottom and a circle which consists of height one points 
(see Figure \ref{Reeb}). 
Indeed, the complement $\T^2 - L$ for a non-compact proper leaf $L$ 
is compact but not $\lambda$-closed with respect to $\tau_{\hat{\F}}$,  
because the unique closed subset containing $\T^2 - L$ is $\T^2$. 
\begin{figure}
\begin{center}
\includegraphics[scale=0.4]{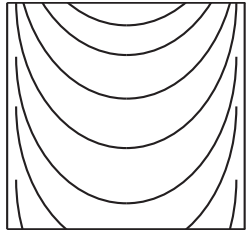}
\end{center}
\caption{A Reeb foliation of $\T^2$, which consists of one compact leaf and 
non-compact proper leaves}
\label{Reeb}
\end{figure}

The properties of the order $<_{\F}$ \cite{S} 
and the definition of ``Artinian'' 
imply the 
following two interpretations for foliations.

\begin{lemma}\label{lem:04} 
A foliation $\F$ is $S_D$ \iff $\overline{L} - \hat{L}$ is a union of finite leaf closures for any leaf $L \in \F$.  
\end{lemma}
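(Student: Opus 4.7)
The plan is to reinterpret the $S_D$ axiom at the level of the foliation and then establish the two directions. By the translation between separation axioms on $X$ and classes in $\hat{X}$, the foliation $\F$ is $S_D$ exactly when the inferior structure $\mathop{\hat{\Downarrow}}_{\hat{\F}}\hat{L}$ is closed in the leaf class space for every leaf $L$, which pulls back under the natural projection to the closedness in $M$ of
\[
\overline{L} - \hat{L} \;=\; \bigcup\{L' \in \F : L' <_{\F} L\}.
\]
Observe also that this set is $\F$-saturated and is a downset with respect to $\leq_{\F}$.

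First I would dispatch the direction $(\Leftarrow)$, which is routine: if $\overline{L} - \hat{L} = \bigcup_{i=1}^n \overline{L_i}$ is a finite union of leaf closures, then each $\overline{L_i}$ is closed, hence so is the union, and $\F$ is $S_D$ by the translation above.

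For the converse $(\Rightarrow)$, suppose $\F$ is $S_D$, so the downset $\overline{L} - \hat{L}$ is closed and $\F$-saturated. The strategy is to isolate a finite set of leaves whose closures already exhaust this set. Using the properties of the order $<_{\F}$ from \cite{S} combined with the Artinian (descending chain) condition, the sub-poset $\{\hat{L'} : L' <_{\F} L\}$ admits only finitely many maximal elements $\hat{L_1}, \ldots, \hat{L_n}$. For each such $L_i$, we have $\overline{L_i} \subseteq \overline{L} - \hat{L}$ since $L_i <_{\F} L$ forces $\overline{L_i} \subsetneq \overline{L}$ with $\overline{L_i} \cap \hat{L} = \emptyset$. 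Conversely, every leaf $L' \subseteq \overline{L} - \hat{L}$ satisfies $\hat{L'} \leq \hat{L_i}$ for some $i$, so $\overline{L'} \subseteq \overline{L_i}$, and thus $\overline{L} - \hat{L} = \bigcup_{i=1}^n \overline{L_i}$.

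The main obstacle will be the finiteness step in the $(\Rightarrow)$ direction: justifying that $\{\hat{L'} : L' <_{\F} L\}$ has only finitely many maximal elements. This is where the specific structure of the order $<_{\F}$ from \cite{S} (local finiteness of the stratification of leaf closures) together with the Artinian hypothesis is essential—without such finiteness, one would only recover a (possibly infinite) union of leaf closures, which is strictly weaker than the claimed decomposition.
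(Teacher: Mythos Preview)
Your $(\Leftarrow)$ direction is fine, but the $(\Rightarrow)$ direction has a genuine gap. You invoke an ``Artinian hypothesis'' to obtain finitely many maximal classes below $\hat{L}$, yet no such hypothesis appears in the statement: the lemma is asserted for an arbitrary (codimension one) foliation, and Artinian is treated as an \emph{independent} condition in the very next lemma (Lemma~\ref{lem:03}) and in Proposition~\ref{prop:c}. So you are assuming something that is simply not given.

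Moreover, even if one granted the Artinian (descending chain) condition, it would not yield what you need. DCC prevents infinite strictly decreasing chains; it says nothing about the number of \emph{maximal} elements in a downset. An antichain of infinitely many incomparable leaf classes below $\hat{L}$ is perfectly compatible with DCC, so your passage from ``Artinian'' to ``finitely many maximal $\hat{L_i}$'' is unjustified on its own.

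The paper's argument is shorter and does not attempt an order-theoretic reduction at all: it appeals directly to Salhi's structural theory of the inferior structure for codimension one foliations \cite{S}, which gives the precise equivalence ``$\overline{L} - \hat{L}$ is closed $\Leftrightarrow$ $\overline{L} - \hat{L}$ is a finite union of leaf closures'' as a black-box input. This is a genuinely foliation-theoretic fact (part of the theory of levels of leaves), not a consequence of general poset reasoning. Your proof could be repaired by citing exactly this result from \cite{S} in place of the Artinian-plus-maximal-elements step; once you do that, the rest of your write-up collapses to the paper's one-line proof.
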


\begin{proof}
By the properties of the inferior structure \cite{S}, 
the inferior structure $\overline{L} - \hat{L}$ is closed 
if and only if 
it is a finite union of leaf closures. 
\end{proof}

\begin{lemma}\label{lem:03} 
The class decomposition $\hat{\F}$ of a foliation $\F$ is Artinian \iff the height $\mathop{\mathrm{ht}}_{{\F}}(L)$ of each leaf $L \in \F$ is finite. 
\end{lemma}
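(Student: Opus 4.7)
The plan is to show the two directions separately, with the forward implication being essentially a counting argument and the reverse implication requiring the foliation-specific structure from \cite{S}.

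For the forward direction, I would assume that $\mathop{\mathrm{ht}}_{\F}(L)$ is finite for every leaf $L \in \F$, and take an arbitrary strictly descending sequence of leaf classes $\hat{L}_0 >_{\F} \hat{L}_1 >_{\F} \hat{L}_2 >_{\F} \cdots$. For every $k$, the initial segment $\hat{L}_0 > \hat{L}_1 > \cdots > \hat{L}_k$ is a chain in the class poset $\hat{\F}$ having $\hat{L}_0$ as its maximal element and cardinality $k+1$, so by the definition of height one obtains $k \leq \mathop{\mathrm{ht}}_{\F}(L_0) < \infty$. Consequently the descending sequence is finite, which is exactly the descending chain condition.

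For the reverse direction, I would argue by contradiction: suppose $\hat{\F}$ is Artinian but some leaf $L$ satisfies $\mathop{\mathrm{ht}}_{\F}(L) = \infty$. Consider the tree $T_L$ whose nodes are the finite strictly descending chains $L = L_0 > L_1 > \cdots > L_k$ in $\hat{\F}$, ordered by end-extension, rooted at the trivial chain $\{L\}$. The hypothesis $\mathop{\mathrm{ht}}_{\F}(L) = \infty$ says precisely that $T_L$ contains branches of arbitrarily large length, so $T_L$ is infinite. The plan is to invoke the structural properties of the order $<_{\F}$ from \cite{S} (Salhi's analysis of codimension-one foliations), which for each leaf bound the number of immediate predecessors in the covering relation, to ensure that $T_L$ is finitely branching at every node. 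König's lemma then produces an infinite branch of $T_L$, i.e.\ an infinite strictly descending chain in $\hat{\F}$, contradicting the Artinian assumption.

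The main obstacle lies in the reverse direction, because for an arbitrary poset the descending chain condition does not imply that each element has finite height; a poset can easily satisfy DCC while still admitting elements of infinite height (for example, the poset consisting of a top point $a$ together with finite chains of every length hanging below it). Thus the whole content of this direction rests on invoking the correct cover-finite/local-finiteness property of the codimension-one leaf order from \cite{S} that legitimises the König-type extraction; once that property is cited, the rest of the reverse argument is a routine application of König's lemma, and the two implications combine to give the equivalence.
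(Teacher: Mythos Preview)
Your two-direction analysis is sound and in fact more explicit than the paper itself: the paper offers no proof for this lemma at all, merely remarking beforehand that it follows from ``the properties of the order $<_{\F}$ \cite{S} and the definition of Artinian.'' So there is no detailed argument to compare against---your outline is already a refinement of what the paper provides.

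One technical point in your K\"onig step deserves care. As you define $T_L$---all finite strictly descending chains from $L$, ordered by end-extension---the branching at a node ending in $\hat L_k$ is the cardinality of $\hat{\mathop{\Downarrow}}\,\hat L_k$, i.e.\ the total number of leaf classes strictly below $\hat L_k$, not merely the number of \emph{covers} of $\hat L_k$. A bound on immediate predecessors alone therefore does not make this $T_L$ finitely branching. If instead you build $T_L$ from cover-chains only, finite branching follows from a bound on covers, but then the implication ``infinite height $\Rightarrow$ $T_L$ infinite'' is no longer automatic (the Artinian poset $\omega+1$ has a point of infinite height with no covers at all). Either route closes once the correct structural fact from \cite{S} about the codimension-one order $<_{\F}$ is invoked---which is precisely what the paper defers to---so your plan is the right one; just be explicit about which finiteness statement you are citing and match the tree construction to it. (Your ``forward'' and ``reverse'' labels are also swapped relative to the stated biconditional, though that is purely cosmetic.)
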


These lemmas imply the following interpretation. 

\begin{proposition}\label{prop:c} 
The leaf class space of a codimension one foliation on a compact manifold is Artinian and $S_D$ \iff each leaf closure consists of finitely many local minimal sets.  
\end{proposition}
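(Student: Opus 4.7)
The proof will combine the abstract characterizations in Lemma \ref{lem:03} and Lemma \ref{lem:04} with the classical structure theory of codimension one foliations on compact manifolds. By Lemma \ref{lem:03}, $\hat{\F}$ is Artinian if and only if every leaf has finite height, and by Lemma \ref{lem:04}, $\F$ is $S_D$ if and only if $\overline{L}-\hat{L}$ is a finite union of leaf closures for every leaf $L$. So the left-hand side is exactly the conjunction of ``every leaf has finite height'' and ``every inferior structure is a finite union of leaf closures''.

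For the forward implication, I induct on the height $\mathop{\mathrm{ht}}_{\F}(L)$, which is finite for every leaf $L$ by Lemma \ref{lem:03}. If $\mathop{\mathrm{ht}}_{\F}(L)=0$, then no leaf lies strictly below $L$, so $\overline{L}=\hat{L}$; the codimension one structure $M=\mathrm{Cl}\sqcup\mathrm{E}\sqcup\mathrm{LD}\sqcup\mathrm{P}$ forces $\hat{L}$ to be a compact leaf, an exceptional local minimal set, or a locally dense local minimal set, so $\overline{L}$ is a single local minimal set. For the inductive step, Lemma \ref{lem:04} yields $\overline{L}-\hat{L}=\bigcup_{i=1}^{k}\overline{L_i}$ with each $L_i<L$ of strictly smaller height; by induction each $\overline{L_i}$ contains only finitely many local minimal sets, and adjoining at most one further local minimal set coming from $\hat{L}$ itself (when $L$ is compact, exceptional, or locally dense) shows that $\overline{L}$ contains only finitely many local minimal sets.

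For the backward implication, assume every leaf closure contains only finitely many local minimal sets. I invoke the structural decomposition for codimension one foliations (\cite{S},\cite{S2}): the leaves below $L$ organise into a rooted structure whose roots are precisely the local minimal sets inside $\overline{L}$, and each non-minimal leaf $L'\in\overline{L}$ accumulates on one of these roots through a finite tower of proper leaves. Finiteness of the set of roots forces $\overline{L}-\hat{L}$ to decompose as a finite union of leaf closures (giving $S_D$ via Lemma \ref{lem:04}), and it also forces every descending chain of leaves under $L$ to terminate in one of the finitely many local minimal sets in boundedly many steps, bounding $\mathop{\mathrm{ht}}_{\F}(L)$ (giving Artinian via Lemma \ref{lem:03}).

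The main obstacle is this last structural step in the backward direction: extracting from ``finitely many local minimal sets in $\overline{L}$'' both a uniform bound on the depth of nested proper leaves and a finite cover of $\overline{L}-\hat{L}$ by leaf closures. This requires a careful appeal to the Salhi-type decomposition already cited in the paper, ensuring that the tower of proper leaves above each local minimal set has only finitely many maximal elements and bounded length.
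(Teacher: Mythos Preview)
Your approach is essentially the same as the paper's: both reduce the statement to Lemma~\ref{lem:03} and Lemma~\ref{lem:04} and then defer the remaining equivalence (``finite height and finitely many leaf closures in each inferior structure'' $\Leftrightarrow$ ``finitely many local minimal sets in each leaf closure'') to Salhi's structure theory~\cite{S}. The paper's own proof is literally the one line ``These lemmas imply the following interpretation,'' so the dependence on \cite{S} that you flag as the main obstacle is not something you are missing relative to the paper---the paper simply absorbs it into the citations already present in the proofs of Lemma~\ref{lem:03} and Lemma~\ref{lem:04}.

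Your forward induction on $\mathop{\mathrm{ht}}_{\F}(L)$ is correct and makes explicit what the paper leaves implicit. One minor correction in your backward direction: for the Artinian conclusion you only need each descending chain below $L$ to be \emph{finite}, not that there is a uniform bound on its length; so the phrase ``in boundedly many steps'' is stronger than required. What Salhi's theory actually provides is that the niveau of each leaf is finite and that the inferior structure $\overline{L}-\hat{L}$ is closed precisely when it is a finite union of leaf closures (this is exactly the content invoked in the proof of Lemma~\ref{lem:04}); once that is granted, both conditions follow from the hypothesis without needing a uniform depth bound.
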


We show that recurrence implies quasi-Hausdorffness for codimension one foliations. 

\begin{proposition}\label{prop:01}
Each codimension one recurrent foliation on a compact manifold is q-$S_2$. 
\end{proposition}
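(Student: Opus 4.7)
The plan is a case analysis on the types of $L_1,L_2$, relying on the trichotomy $M=\mathrm{Cl}\sqcup\mathrm{LD}\sqcup\mathrm{E}$ afforded by recurrence ($\mathrm{P}=\emptyset$). Fix leaves with $\hat{L_1}\neq\hat{L_2}$: I must produce either $L''$ with $L_1,L_2\subseteq\overline{L''}$, or disjoint open saturated neighborhoods. If $L_1\leq_{\F} L_2$ (or symmetrically) then $L''=L_2$ works, so assume $L_1,L_2$ are incomparable. If one of them, say $L_1$, is locally dense, Lemma \ref{lem046} makes $\hat{L_1}$ open and saturated, and the closed $\F$-saturated set $\overline{L_1}$ is disjoint from the saturated leaf $L_2$ (since $L_2\not\subseteq\overline{L_1}$); hence $M\setminus\overline{L_1}$ is an open saturated neighborhood of $L_2$ disjoint from $\hat{L_1}\subseteq\overline{L_1}$.

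The main case is $L_1,L_2\in\mathrm{Cl}\sqcup\mathrm{E}$, where each $\overline{L_i}$ is a minimal set (a compact leaf or an exceptional minimal set), and $\hat{L_1}\neq\hat{L_2}$ combined with minimality forces $\overline{L_1}\cap\overline{L_2}=\emptyset$. I split further on whether some leaf $L''$ satisfies $\overline{L_1}\cup\overline{L_2}\subseteq\overline{L''}$: if yes, $L''$ is the required dominator and the first alternative of q-$S_2$ holds. Otherwise, I will build disjoint open saturated neighborhoods by taking shrinking open sets $V_i^{(n)}\supseteq\overline{L_i}$ in $M$ and passing to their $\F$-saturations, which are open by Lemma \ref{lem:001}. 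The claim to establish is $\F(V_1^{(n)})\cap\F(V_2^{(n)})=\emptyset$ for some $n$; failing that, a sequence of leaves meeting both shrinking neighborhoods, together with a compactness/diagonal argument and the openness of the superior structures $\mathop{\Downarrow_{\hat{\F}}}L_i$ recalled from \cite{S2}, would produce a leaf whose closure contains both $\overline{L_1}$ and $\overline{L_2}$—contradicting the absence of a common dominator.

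The main obstacle I anticipate is this last extraction step. A leaf may pass through arbitrarily small neighborhoods of both minimal sets without accumulating on either; the prototype is a compact leaf parallel to $\overline{L_1}$ lying also near $\overline{L_2}$. The limit argument therefore must separate the \emph{parallel} regime, in which $\overline{L_i}$ admits a local basis of open saturated tubular neighborhoods and disjointness is immediate, from the \emph{genuinely accumulating} regime, in which a Hausdorff limit of leaf closures of elements in $\F(V_1^{(n)})\cap\F(V_2^{(n)})$ is a closed $\F$-saturated set containing both minimal sets and harbors a leaf realizing the common upper bound. Cleanly distinguishing these two regimes via the codimension-one structure of $\F$ is the technical heart of the argument.
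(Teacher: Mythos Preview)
Your setup, the handling of the comparable and locally-dense cases, and the decision to split on the existence of a common dominator all match the paper. But the main case has a factual error and a structural detour.

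The claim that for $L_i\in\mathrm{Cl}\sqcup\mathrm{E}$ each $\overline{L_i}$ is a minimal set is false in a merely recurrent foliation: recurrence gives $\mathrm{P}=\emptyset$, not $\mathrm{E}\subseteq\min M$. The paper itself constructs, immediately after the characterization of $S_{1/2}$ foliations, a recurrent codimension one foliated bundle with an exceptional local minimal set whose closure strictly contains a compact leaf. So you cannot conclude $\overline{L_1}\cap\overline{L_2}=\emptyset$ from incomparability, and your shrinking-neighborhood scheme around the \emph{closures} $\overline{L_i}$ cannot even get started in that situation: saturations of neighborhoods of intersecting closed sets will always meet.

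The paper sidesteps both this and your acknowledged ``technical heart'' by working with upsets rather than closures. The key observation is that if no leaf $L''$ satisfies $L,L'\subseteq\overline{L''}$ then $\mathop{\uparrow}_{\F}L\cap\mathop{\uparrow}_{\F}L'=\emptyset$ automatically; so it suffices to produce, for each of $L,L'$, an open saturated neighborhood lying inside its own upset. For an exceptional leaf this is immediate from Salhi's lemma: $\mathop{\uparrow}_{\F}L$ is itself open. For a compact leaf the paper analyzes one-sided holonomy (after passing to the transversally orientable cover): either there are fixed points arbitrarily close, giving a trivially foliated saturated collar --- your ``parallel regime'' --- or there are none, forcing every leaf through a small collar to accumulate on $L$, so the saturation of that collar already sits in $\mathop{\uparrow}_{\F}L$. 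This direct local construction replaces your Hausdorff-limit extraction entirely and never needs disjointness of $\overline{L_1}$ and $\overline{L_2}$.
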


\begin{proof} 
Let $\F$ be a codimension one recurrent foliation on a compact manifold $M$. 
Considering 
the transversally orientable foliation on the double covering of $M$ if necessary, 
we may assume that 
$\F$ is transversally orientable. 
%For a leaf $L$, 
%define $U(L) := \cup \{ L'' \in \F \mid L \subseteq \overline{L''} \}$. 
If $L$ is exceptional, then Lemma 3 \cite{S} implies that there is an open saturated \nbd of $L$ where each leaf closure contains $L$, and so that $\mathop{\Uparrow}_{\F} L$ is an open saturated \nbd of $L$. 
By the recurrence, we have $M =  \mathrm{Cl}  \sqcup \mathrm{LD} \sqcup \mathrm{E}$. 
We may assume that there are leaves $L \neq L' \in \F$ such that there is no leaf $L'' \in \F$ with $L, L' \subseteq \overline{L''}$. 
Fix such leaves $L \neq L' \in \F$. 
% such that there is no leaf $L'' \in \F$ with $L, L' \subseteq \overline{L''}$. 
Then $\mathop{\Uparrow}_{\F} L \cap \mathop{\Uparrow}_{\F} L' = \emptyset$. 
It suffices to show that $L$ and $L'$ can be separated by disjoint open saturated subsets. 
Indeed, suppose that $L$ and $L'$ are exceptional. 
Lemma 3 \cite{S} implies that $\mathop{\Uparrow}_{\F} L$ (resp. $\mathop{\Uparrow}_{\F} L'$) is an open saturated \nbd of $L$ (resp. $L'$). 
Thus we may assume that $L$ is not exceptional. 
Suppose that $L$ is locally dense. 
Then $\overline{L}$ is a closed saturated \nbd of $L$. 
By the hypothesis, $\overline{L} \cap L' = \emptyset$ and so $L$ and $L'$ are separated by disjoint open saturated subsets. 
Thus we may assume that $L$ and $L'$ are not locally dense. 
Then $L$ is compact. 
If the right (resp. left) holonomy of $L$ has a sequence of fixed points converging to a point in $L$, then there is a closed saturated right (resp. left) collar $V$ of $L$ which is a trivial product foliation on $[0, 1] \times L$ (resp. $[-1, 0] \times L$). 
Thus we may assume that the right (resp. left) holonomy of $L$ has no fixed point except points of $L$. 
Therefore there is a collar $C$ of $L$ which is homeomorphic to $[0, 1] \times L$ (resp. $[-1, 0] \times L$) such that each leaf closure of a leaf through a point in $C$ contains $L$. 
The saturation $\F(C)$ of $C$ is a collar of $L$ contained in $\mathop{\Uparrow}_{\F} L$ 
and so $\mathop{\Uparrow}_{\F} L$ contains an open saturated \nbd of $L$. 
If $L'$ is compact, then $\mathop{\Uparrow}_{\F} L'$ contains an open saturated \nbd of $L'$. 
If $L'$ is exceptional,  then Lemma 3 \cite{S} implies that $\mathop{\Uparrow}_{\F} L'$ is an open saturated \nbd of $L'$. 
This completes the proof. 
\end{proof}

Note that there are codimension one q-$S_2$ foliations which are not wandering (in particular, not recurrent) (e.g. Reeb foliations on some manifolds).   
In the group-action case, there is a $S_1$ group-action which is not q-$S_2$. 
In fact, a smooth vector field $v = (0, y)$ on $\T^2= (\R/\Z)^2$ 
generates such a group-action.  
Moreover, 
a smooth vector field $v = (1, \sin (2\pi z), 0)$ on $\T^3$ 
generates a codimension two $S_1$ foliation which is not q-$S_2$. 
On the other hand, 
the author would like to know whether 
there is a codimension one foliation on a compact manifold which is not q-$S_2$. 
In other words, 
is there a codimension one foliation on a compact manifold which is not q-$S_2$?

\subsection{Hyperbolic-like property for foliations}

Let $\F$ be a foliation on a compact manifold $M$. 
%Lemma \ref{lem:00a} implies the following statement. 

\begin{proposition}\label{lem:12}
Each codimension $k \geq 1$ proper foliation is weakly hyperbolic-like but not of Anosov type. 
\end{proposition}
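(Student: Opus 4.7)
The plan is to reduce this to Lemma~\ref{lem:00a} applied to the class space $M/\hat{\F}$ with the saturated topology $\tau_{\hat{\F}}$: it suffices to verify that $\tau_{\hat{\F}}$ is $T_D$ and that it has no open points.

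For the $T_D$ property, I would use the fact that a leaf $L$ is proper precisely when it is locally closed in $M$, i.e.\ when $\overline{L} - L$ is closed in $M$. This is exactly the condition that the derived set $\mathop{\Downarrow}_{\tau_{\F}} L$ of the point $L$ in the leaf space is closed, so ``$\F$ proper'' translates directly into ``$\F$ is $T_D$''. This is the equivalence already recorded in Theorem~2.1 of \cite{Y2} for codimension one, and the same characterization of properness as local closedness works in arbitrary codimension.

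For the absence of open points, I would first check that $\hat{L} = L$ as subsets of $M$ for any proper leaf $L$. Indeed, if $L' \in \hat{L}$ with $L' \neq L$, then $\overline{L'} = \overline{L}$ and $L' \cap L = \emptyset$, so $L' \subseteq \overline{L} - L$; but $\overline{L} - L$ is closed by properness, giving $\overline{L} = \overline{L'} \subseteq \overline{L} - L$, which contradicts $L \subseteq \overline{L}$. Consequently $\hat{L}$ is an open point of $M/\hat{\F}$ if and only if the leaf $L$ is an open subset of $M$. The codimension hypothesis $k \geq 1$ forces $\dim L = \dim M - k < \dim M$, so no leaf can be open in $M$, and hence no point in $M/\hat{\F}$ is open. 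Applying Lemma~\ref{lem:00a} to $M/\hat{\F}$ then yields weakly hyperbolic-like and not of Anosov type, which by our conventions is the assertion for $\F$.

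The main potential obstacle is to confirm cleanly that the characterization ``proper leaf $=$ locally closed leaf'' used in \cite{Y2} for codimension one remains valid in arbitrary codimension; this is a standard foliation-theoretic fact, so I expect the only real work to be the short computation $\hat{L} = L$ above, after which everything else is a direct appeal to existing results.
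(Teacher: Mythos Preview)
Your proposal is correct and follows essentially the same route as the paper: both reduce to Lemma~\ref{lem:00a} by using that properness gives $T_D$ and that codimension $k\geq 1$ forbids open leaves. Your argument is simply more explicit---you spell out $\hat L = L$ (i.e.\ proper $\Rightarrow$ $T_0$, so $M/\F = M/\hat\F$) and the dimension obstruction to open points, whereas the paper invokes Lemma~\ref{lem:00a} directly and adds the one-line remark that no leaf closure is all of $M$ for the ``not of Anosov type'' conclusion.
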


\begin{proof}
Let $\F$ be a codimension $k$ proper foliation. 
Lemma \ref{lem:00a} implies that $\F$ is weakly hyperbolic-like. 
Since each leaf closure is not the whole manifold, $\F$ is not of Anosov type.  
\end{proof}

%There is a non-wandering codimension one foliation 
%without exceptional leaves on $M = \Sigma_3 \times \S^1$ which is not $S_{1/2}$. 
%Indeed, 
%we construct a foliated bundle over $\Sigma_3$ with one compact leaf, one proper leaf, and locally dense leaves. 
%Let $\S^1 = \R \sqcup \{ \infty \}$ be the one-point compactification of $\R$. 
%Define three homeomorphisms $f, g, h$ which are pairwise commutative as follows: 
%%
%Consider $f, g$ as commutative translation on $(0, 1)$ and extend $f, g: \R \to \R$ into homeomorphisms such that $f(x + 1) = f(x) + 1$ and $g(x + 1) = g(x) + 1$. 
%Define $h: \R \to \R$ by $h(x) = x + 1$. 
%Extend $f, g, h$ into homeomorphisms by adding common fixed point $\infty$. 
%Then the resulting foliated bundle is desired. 
%Indeed, 
%$M /\hat{\F}$ consists of three points $\hat{L}_{\infty}, \hat{L}_{cl}, \hat{L}_{\Z}$ such that $\hat{L}_{\infty}$ is a closed point, $\hat{L}_{cl}$ is an open point with $\overline{\hat{L}_{cl}} = M /\hat{\F}$, and $\hat{L}_{\Z}$ is neither a closed point nor an open point. 
%
%

\end{document}